\documentclass[11pt]{amsart}
\usepackage{amsmath,amsfonts,mathrsfs}




\newif\ifdraft
\draftfalse
\usepackage{amsmath,amsfonts,amsthm,mathrsfs}
\usepackage{amssymb}

\usepackage[unicode,bookmarks]{hyperref}
\usepackage[usenames,dvipsnames]{xcolor}
\hypersetup{colorlinks=true,citecolor=NavyBlue,linkcolor=Brown,urlcolor=Orange}

\usepackage[alphabetic,initials]{amsrefs}


\usepackage{enumitem}

\usepackage{chngcntr}

\ifdraft
\usepackage[notcite,notref,color]{showkeys}

\definecolor{labelkey}{gray}{0.5}
\fi

\usepackage{tikz}

\usepackage{tikz-cd}

\usetikzlibrary{matrix,arrows}
\newlength{\myarrowsize} 

\pgfarrowsdeclare{cmto}{cmto}{
	\pgfsetdash{}{0pt} 
	\pgfsetbeveljoin 
	\pgfsetroundcap 
	\setlength{\myarrowsize}{0.6pt}
	\addtolength{\myarrowsize}{.5\pgflinewidth}
	\pgfarrowsleftextend{-4\myarrowsize-.5\pgflinewidth} 
	\pgfarrowsrightextend{.8\pgflinewidth}
}{
	\setlength{\myarrowsize}{0.6pt} 
  	\addtolength{\myarrowsize}{.5\pgflinewidth}  
	\pgfsetlinewidth{0.5\pgflinewidth}
	\pgfsetroundjoin
	\pgfpathmoveto{\pgfpoint{1.5\pgflinewidth}{0}}
	\pgfpatharc{-109}{-170}{4\myarrowsize}
	\pgfpatharc{10}{189}{0.58\pgflinewidth and 0.2\pgflinewidth}
	\pgfpatharc{-170}{-115}{4\myarrowsize+\pgflinewidth}
	\pgfpathclose
	\pgfusepathqfillstroke
	\pgfpathmoveto{\pgfpoint{1.5\pgflinewidth}{0}}
	\pgfpatharc{109}{170}{4\myarrowsize}
	\pgfpatharc{-10}{-189}{0.58\pgflinewidth and 0.2\pgflinewidth}
	\pgfpatharc{170}{115}{4\myarrowsize+\pgflinewidth}
	\pgfpathclose
	\pgfusepathqfillstroke
	\pgfsetlinewidth{2\pgflinewidth}
}

\pgfarrowsdeclare{cmonto}{cmonto}{
	\pgfsetdash{}{0pt} 
	\pgfsetbeveljoin 
	\pgfsetroundcap 
	\setlength{\myarrowsize}{0.6pt}
	\addtolength{\myarrowsize}{.5\pgflinewidth}
	\pgfarrowsleftextend{-4\myarrowsize-.5\pgflinewidth} 
	\pgfarrowsrightextend{.8\pgflinewidth}
}{
	\setlength{\myarrowsize}{0.6pt} 
  	\addtolength{\myarrowsize}{.5\pgflinewidth}  
	\pgfsetlinewidth{0.5\pgflinewidth}
	\pgfsetroundjoin
	\pgfpathmoveto{\pgfpoint{1.5\pgflinewidth}{0}}
	\pgfpatharc{-109}{-170}{4\myarrowsize}
	\pgfpatharc{10}{189}{0.58\pgflinewidth and 0.2\pgflinewidth}
	\pgfpatharc{-170}{-115}{4\myarrowsize+\pgflinewidth}
	\pgfpathclose
	\pgfusepathqfillstroke
	\pgfpathmoveto{\pgfpoint{1.5\pgflinewidth}{0}}
	\pgfpatharc{109}{170}{4\myarrowsize}
	\pgfpatharc{-10}{-189}{0.58\pgflinewidth and 0.2\pgflinewidth}
	\pgfpatharc{170}{115}{4\myarrowsize+\pgflinewidth}
	\pgfpathclose
	\pgfusepathqfillstroke
	\pgfpathmoveto{\pgfpoint{1.5\pgflinewidth-0.3em}{0}}
	\pgfpatharc{-109}{-170}{4\myarrowsize}
	\pgfpatharc{10}{189}{0.58\pgflinewidth and 0.2\pgflinewidth}
	\pgfpatharc{-170}{-115}{4\myarrowsize+\pgflinewidth}
	\pgfpathclose
	\pgfusepathqfillstroke
	\pgfpathmoveto{\pgfpoint{1.5\pgflinewidth-0.3em}{0}}
	\pgfpatharc{109}{170}{4\myarrowsize}
	\pgfpatharc{-10}{-189}{0.58\pgflinewidth and 0.2\pgflinewidth}
	\pgfpatharc{170}{115}{4\myarrowsize+\pgflinewidth}
	\pgfpathclose
	\pgfusepathqfillstroke
	\pgfsetlinewidth{2\pgflinewidth}
}

\pgfarrowsdeclare{cmhook}{cmhook}{
	\pgfsetdash{}{0pt} 
	\pgfsetbeveljoin 
	\pgfsetroundcap 
	\setlength{\myarrowsize}{0.6pt}
	\addtolength{\myarrowsize}{.5\pgflinewidth}
	\pgfarrowsleftextend{-4\myarrowsize-.5\pgflinewidth} 
	\pgfarrowsrightextend{.8\pgflinewidth}
}{
	\setlength{\myarrowsize}{0.6pt} 
  	\addtolength{\myarrowsize}{.5\pgflinewidth}  
 	\pgfsetdash{}{0pt}
	\pgfsetroundcap
	\pgfpathmoveto{\pgfqpoint{0pt}{-4.667\pgflinewidth}}
	\pgfpathcurveto
    {\pgfqpoint{4\pgflinewidth}{-4.667\pgflinewidth}}
    {\pgfqpoint{4\pgflinewidth}{0pt}}
    {\pgfpointorigin}
	\pgfusepathqstroke
}


\newenvironment{diagram*}[2]{%
\[%
\begin{tikzpicture}[>=cmto,baseline=(current bounding box.center),%
	to/.style={->,font=\scriptsize,cap=round},%
	into/.style={cmhook->,font=\scriptsize,cap=round},%
	onto/.style={-cmonto,font=\scriptsize,cap=round},%
	math/.style={matrix of math nodes, row sep=#2, column sep=#1,%
		text height=1.5ex, text depth=0.25ex}]%
}{%
\end{tikzpicture}%
\]%
\ignorespacesafterend%
}

%



\newcommand{\Dmod}{\mathscr{D}}
\newcommand{\Mmod}{\mathcal{M}}




\newcommand{\NN}{\mathbb{N}}

\newcommand{\QQ}{\mathbb{Q}}

\newcommand{\CC}{\mathbb{C}}







\newcommand{\shf}[1]{\mathscr{#1}}



\def\overbar#1#2#3{{%
	\setbox0=\hbox{\displaystyle{#1}}%
	\dimen0=\wd0
	\advance\dimen0 by -#2 
	\vbox {\nointerlineskip \moveright #3 \vbox{\hrule height 0.3pt width \dimen0}%
		\nointerlineskip \vskip 1.5pt \box0}%
}}




\newcommand{\shO}{\shf{O}}

\makeatletter
\let\@@seccntformat\@seccntformat
\renewcommand*{\@seccntformat}[1]{%
  \expandafter\ifx\csname @seccntformat@#1\endcsname\relax
    \expandafter\@@seccntformat
  \else
    \expandafter
      \csname @seccntformat@#1\expandafter\endcsname
  \fi
    {#1}%
}
\newcommand*{\@seccntformat@subsection}[1]{%
  \textbf{\csname the#1\endcsname.}
}
\makeatother

\makeatletter
\let\@paragraph\paragraph
\renewcommand*{\paragraph}[1]{%
	\vspace{0.3\baselineskip}%
	\@paragraph{\textit{#1}}%
}
\makeatother

\counterwithin{equation}{section}
\counterwithin{figure}{section}

\newtheorem{theorem}[equation]{Theorem}
\newtheorem*{theorem*}{Theorem}
\newtheorem{lemma}[equation]{Lemma}
\newtheorem*{lemma*}{Lemma}
\newtheorem{corollary}[equation]{Corollary}
\newtheorem{proposition}[equation]{Proposition}
\newtheorem*{proposition*}{Proposition}

\theoremstyle{definition}
\newtheorem{definition}[equation]{Definition}
\newtheorem*{definition*}{Definition}
\newtheorem{remark}[equation]{Remark}

\newtheorem{question}[equation]{Question}

\newtheorem{example}[equation]{Example}
\newtheorem*{example*}{Example}
\newtheorem*{problem*}{Problem}

\theoremstyle{plain}

\newcommand{\theoremref}[1]{\hyperref[#1]{Theorem~\ref*{#1}}}
\newcommand{\lemmaref}[1]{\hyperref[#1]{Lemma~\ref*{#1}}}
\newcommand{\definitionref}[1]{\hyperref[#1]{Definition~\ref*{#1}}}
\newcommand{\propositionref}[1]{\hyperref[#1]{Proposition~\ref*{#1}}}
\newcommand{\conjectureref}[1]{\hyperref[#1]{Conjecture~\ref*{#1}}}
\newcommand{\corollaryref}[1]{\hyperref[#1]{Corollary~\ref*{#1}}}
\newcommand{\exampleref}[1]{\hyperref[#1]{Example~\ref*{#1}}}

\makeatletter
\let\old@caption\caption
\renewcommand*{\caption}[1]{%
	\setcounter{figure}{\value{equation}}%
	\stepcounter{equation}%
	\old@caption{#1}\relax%
}
\makeatother

\newcounter{intro}

\newtheorem{intro-conjecture}[intro]{Conjecture}
\newtheorem{intro-corollary}[intro]{Corollary}
\newtheorem{intro-theorem}[intro]{Theorem}



\newcommand{\parref}[1]{\hyperref[#1]{\S\ref*{#1}}}

\makeatletter
\newcommand*\if@single[3]{%
  \setbox0\hbox{${\mathaccent"0362{#1}}^H$}%
  \setbox2\hbox{${\mathaccent"0362{\kern0pt#1}}^H$}%
  \ifdim\ht0=\ht2 #3\else #2\fi
  }
\newcommand*\rel@kern[1]{\kern#1\dimexpr\macc@kerna}
\newcommand*\widebar[1]{\@ifnextchar^{{\wide@bar{#1}{0}}}{\wide@bar{#1}{1}}}
\newcommand*\wide@bar[2]{\if@single{#1}{\wide@bar@{#1}{#2}{1}}{\wide@bar@{#1}{#2}{2}}}
\newcommand*\wide@bar@[3]{%
  \begingroup
  \def\mathaccent##1##2{%
    \if#32 \let\macc@nucleus\first@char \fi
    \setbox\z@\hbox{$\macc@style{\macc@nucleus}_{}$}%
    \setbox\tw@\hbox{$\macc@style{\macc@nucleus}{}_{}$}%
    \dimen@\wd\tw@
    \advance\dimen@-\wd\z@
    \divide\dimen@ 3
    \@tempdima\wd\tw@
    \advance\@tempdima-\scriptspace
    \divide\@tempdima 10
    \advance\dimen@-\@tempdima
    \ifdim\dimen@>\z@ \dimen@0pt\fi
    \rel@kern{0.6}\kern-\dimen@
    \if#31
      \overline{\rel@kern{-0.6}\kern\dimen@\macc@nucleus\rel@kern{0.4}\kern\dimen@}%
      \advance\dimen@0.4\dimexpr\macc@kerna
      \let\final@kern#2%
      \ifdim\dimen@<\z@ \let\final@kern1\fi
      \if\final@kern1 \kern-\dimen@\fi
    \else
      \overline{\rel@kern{-0.6}\kern\dimen@#1}%
    \fi
  }%
  \macc@depth\@ne
  \let\math@bgroup\@empty \let\math@egroup\macc@set@skewchar
  \mathsurround\z@ \frozen@everymath{\mathgroup\macc@group\relax}%
  \macc@set@skewchar\relax
  \let\mathaccentV\macc@nested@a
  \if#31
    \macc@nested@a\relax111{#1}%
  \else
    \def\gobble@till@marker##1\endmarker{}%
    \futurelet\first@char\gobble@till@marker#1\endmarker
    \ifcat\noexpand\first@char A\else
      \def\first@char{}%
    \fi
    \macc@nested@a\relax111{\first@char}%
  \fi
  \endgroup
}
\makeatother

\newcommand{\I}{\mathcal{I}}

\def\NN{{\mathbf N}}
\def\CC{{\mathbf C}}
\def\AAA{{\mathbf A}}

\def\QQ{{\mathbf Q}}

\def\fra{{\mathfrak a}}
\def\frb{{\mathfrak b}}

\setlength{\parskip}{.05 in}
\setlength{\textwidth}{5.6 in}
\setlength{\evensidemargin}{0.5 in}
\setlength{\oddsidemargin}{0.5 in}

\begin{document}

\vspace{\baselineskip}

\title{Hodge ideals and minimal exponents of ideals}

\author[M. Musta\c{t}\v{a}]{Mircea~Musta\c{t}\u{a}}
\address{Department of Mathematics, University of Michigan,
Ann Arbor, MI 48109, USA}
\email{{\tt mmustata@umich.edu}}

\author[M.~Popa]{Mihnea~Popa}
\address{Department of Mathematics, Northwestern University, 
2033 Sheridan Road, Evanston, IL
60208, USA} \email{{\tt mpopa@math.northwestern.edu}}

\thanks{MM was partially supported by NSF grant DMS-1701622 and a Simons Fellowship; MP was partially supported by NSF grant
DMS-1700819.}

\subjclass[2010]{14F10, 14J17, 14F18}

\begin{abstract}
We define and study Hodge ideals associated to a coherent ideal sheaf $\fra$ on a smooth complex variety, via algebraic constructions based on the already existing concept of Hodge ideals associated to $\QQ$-divisors. We also define the generic minimal exponent
of $\fra$, extending the standard invariant for hypersurfaces. We relate it to Hodge ideals, and show that it is a root of the Bernstein-Sato polynomial of $\fra$.
\end{abstract}

\maketitle

\section{Introduction}

Let $X$ be a smooth complex algebraic variety. If $D$ is a reduced hypersurface in $X$ and $\shO_X(*D)$ is the sheaf of rational functions on $X$
with poles along $D$, then Saito's theory of mixed Hodge modules
\cite{Saito-MHM} endows $\shO_X(*D)$ with a Hodge filtration. This filtration can be described via a sequence of  \emph{Hodge ideals} $I_p(D)$, for $p\geq 0$, that were systematically studied in \cite{MP1}. More generally, it was shown in \cite{MP4} that one can attach Hodge ideals
to arbitrary effective $\QQ$-divisors on $X$. These invariants provide ``higher versions" of multiplier ideals, which have been playing an important role in birational geometry (see \cite[Chapter~9]{Lazarsfeld}), and which essentially correspond to the case $p =0$ in the sequence above.


Our goal in this note is to attach similar invariants to (rational powers) of an arbitrary coherent ideal $\fra$ on $X$. To this end, there are two natural approaches.  The first is based on studying the Hodge filtration on the local cohomology sheaves ${\mathcal H}_Z^q(\shO_X)$, where $Z$ is the closed subscheme 
associated to $\fra$. In this approach one stays close to Hodge theory, but the filtrations cannot be described anymore via ideals in $\shO_X$;
we plan to tackle this study in future work.
Here we take an algebraic approach, motivated by the theory of multiplier ideals, defining Hodge ideals for rational powers of coherent ideals
by making use of the existing notion for effective $\QQ$-divisors.

After replacing $X$ by the subsets in an affine open cover, we may assume that $X$ is affine and that the
ideal $\fra$ is generated by $f_1,\ldots,f_r\in\shO_X(X)$. A basic fact about multiplier ideals is that if $D$ is defined by $f=\sum_{i=1}^r\alpha_if_i$, with $\alpha_i\in\CC$ general, then for every $\lambda<1$ we have 
$$\I(\fra^{\lambda})=\I(\lambda D).$$
However, for $p\geq 1$, it turns out that even in simple examples the ideal $I_p(\lambda D)$, with $D$ as above, might depend on $D$. 

Instead, given a positive rational number $\lambda\leq 1$, we define $I_p(\fra^{\lambda})$ to be the ideal generated by all $I_p(\lambda D)$, where $D$ is the divisor defined by any $f\in \fra$ that satisfies a mild condition (for example, if $\fra$ is reduced in codimension $1$, we may take all $f\in\fra$
that define reduced divisors). We show that it is enough in fact to let $D$ vary over the divisors defined by general linear combinations of the generators of $\fra$. 
Yet another equivalent description of $I_p(\fra^{\lambda})$ is the following: if $y_1,\ldots,y_r$ denote the coordinate functions on $\AAA^r$ and we consider the 
regular function $g=\sum_{i=1}^ry_if_i$ on $X\times\AAA^r$, defining the divisor $G$, then $I_p(\fra^{\lambda})$ is generated by the coefficients of all elements of
$I_p(\lambda G)\subseteq\shO_X(X)[y_1,\ldots,y_r]$. These equivalent descriptions of $I_p(\fra^{\lambda})$ are discussed in Section 2. It is not hard to extend them to a definition in the global case.

In Section 3, we use the properties of Hodge ideals for $\QQ$-divisors proved in \cite{MP4} to show corresponding results in our more general context.
For example, we derive analogues of the Restriction Theorem and the Subadditivity Theorem in this setting. Some examples of Hodge ideals associated to ideals are computed in Section 4. 

We note that this theory of Hodge ideals associated to ideal sheaves is not yet completely satisfactory, since some of the main tools from the study of Hodge ideals of divisors are still missing. The main reason is the lack of a direct connection with Hodge theory. For example, we don't know whether on projective varieties there is a vanishing theorem for Hodge ideals associated to an ideal $\fra$ (see Question~\ref{vanishing}).

Finally, in Section 5 we define and study an extension of the notion of \emph{minimal exponent} to the case of ideals. 
Recall first that for a divisor $D$ and $x\in {\rm Supp}(D)$, the minimal exponent $\widetilde{\alpha}_x(D)$ is the negative of the largest root 
of the reduced Bernstein-Sato polynomial of $D$ at $x$. This is a refined version of the log canonical threshold ${\rm lct}_x(D)$, which is equal to $\min\{\widetilde{\alpha}_x(D),1\}$. It is intimately linked to Hodge ideals as follows: by \cite[Corollary~C]{MP3}, if $D$ is a reduced divisor and $\lambda$ is a rational number with $0<\lambda\leq 1$, then for every $p$ we have $I_p(\lambda D)_x=\shO_{X,x}$ if and only if $p+\lambda\leq \widetilde{\alpha}_x(D)$.

For an arbitrary ideal $\fra$, and a point $x$ in the zero-locus of $\fra$, we define an invariant, the \emph{generic minimal exponent}
$\overline{\alpha}_x(\fra)$, which is the minimal exponent at $x$ of a general hypersurface containing the subscheme defined by $\fra$. More precisely,
if $D$ is the divisor defined by a general linear combination of generators of $\fra$ in an affine open neighborhood of $x$, then 
$\overline{\alpha}_x(\fra)=\widetilde{\alpha}_x(D)$. As in the divisorial case, if $\lambda$ is a rational number with $0<\lambda\leq 1$,  and $\fra$ is radical in codimension $1$ around $x$, then
$$I_p(\fra^{\lambda})_x=\shO_{X,x} \iff p+\lambda\leq\overline{\alpha}_x(\fra).$$ 
(If $\fra$ is not radical  in codimension $1$ around $x$,
then $\overline{\alpha}_{\fra,x}$ is equal to the log canonical threshold ${\rm lct}_x(\fra)$ of $\fra$ at $x$.)
We extend the basic properties of minimal exponents of divisors to the case of arbitrary ideals. The main result we prove, 
Theorem \ref{root}, states that $\overline{\alpha}_x(\fra)$ is a root of the Bernstein-Sato polynomial $b_{\fra}(s)$ defined in \cite{BMS}.

\section{Equivalent definitions}

Our goal in this section is to give the definition of Hodge ideals associated to arbitrary nonzero ideals and provide some equivalent descriptions. Let $X$ be a smooth $n$-dimensional complex algebraic variety and
$\fra$ a nonzero coherent ideal on $X$.

Since $X$ is smooth, after taking a suitable affine open cover of $X$, we reduce to the case 
 when $X$ is an affine variety and ${\mathfrak a}=g\cdot {\mathfrak b}$, where ${\mathfrak b}$ is an ideal
defining a closed subscheme of codimension $\geq 2$. From now on, unless explicitly mentioned otherwise, 
we make this assumption.  We write ${\rm div}(\fra)$ for the effective divisor defined by $g$
(this is independent of the choice of $g$, and moreover, it can be associated to $\fra$ on any smooth variety).
Note that if $h_1,\ldots,h_r$ are generators of $\frb$ and $\alpha_1,\ldots,\alpha_r\in\CC$ are general,
then $\sum_i\alpha_ih_i$ defines a reduced effective divisor on $X$, without any common components with ${\rm div}(\fra)$.


\begin{definition}\label{defi1}
If $X$ is a smooth affine variety and ${\mathfrak a}=g\cdot {\mathfrak b}$ as above, then
for every $p\geq 0$ and $\lambda\in (0,1]\cap\QQ$, the 
$p$th \emph{Hodge ideal} of ${\mathfrak a}^{\lambda}$ is
$$I_p({\mathfrak a}^{\lambda}):=\sum_EI_p\big(\lambda ({\rm div}({\mathfrak a})+E)\big),$$
where the sum is over all reduced effective divisors $E$, defined by elements $h\in {\mathfrak b}$,
and which have no common components with ${\rm div}({\mathfrak a})$. Equivalently, we have
$$I_p({\mathfrak a}^{\lambda}):=\sum_DI_p(\lambda D),$$
where $D$ varies over the divisors defined by elements of ${\mathfrak a}$, such that
$D-{\rm div}({\mathfrak a})$ is reduced, without common components with ${\rm div}({\mathfrak a})$.
\end{definition}

This definition makes sense for $\lambda > 1$ as well. However, we believe that from the point of view we want to adopt it does not give the ``correct" objects; see for instance Remark \ref{big_lambda} below. We prefer thus to restrict to $\lambda \in (0,1]$.

\begin{remark}[Reduced subschemes]\label{def_reduced}
Note that if ${\mathfrak a}$ defines a subscheme that is reduced in codimension 1, then
$$I_p(\fra^{\lambda}):=\sum_DI_p(\lambda D),$$
where the sum is over all reduced effective divisors $D$ defined by elements of $\fra$.
\end{remark}

\begin{remark}[Principal ideals]
If the ideal $\mathfrak a$ is principal, defining a divisor $D$, then $I_p({\mathfrak a}^{\lambda})=I_p(\lambda D)$
(in which case, if $D={\rm div}(g)$, we also denote this by $I_p(g^{\lambda})$). This follows from the fact
that if $E$ is an effective divisor, with ${\rm Supp}(D)$ and ${\rm Supp}(E)$ having no common components, then 
$I_p\big(\lambda(D+E)\big)\subseteq I_p(\lambda D)$. This is a consequence of the Subadditivity Theorem for Hodge ideals  (see \cite[Theorem~15.1]{MP4}).
\end{remark}

Before giving other equivalent descriptions of $I_p(\fra^{\lambda})$, we introduce some notation.
Suppose that $X={\rm Spec}(R)$ is affine and 
$J\subseteq R[y_1,\ldots,y_r]$, for some $r\geq 1$, is an ideal. We define the ideal 
${\rm Coeff}(J)$ of $R$ as follows. Choose generators
$Q_1,\ldots,Q_s$ for $J$ and write each of them as
$$Q_i=\sum_{u \in\NN^r}P_{u,i}y^u,$$
with $P_{u,i}\in R$ and $y^u=y_1^{u_1}\cdots y_r^{u_r}$ for every $u=(u_1,\ldots,u_r)\in\NN^r$
(here $\NN$ is the set of nonnegative integers). We then put
$${\rm Coeff}(J):=(P_{u,i}\mid u\in\NN^r,1\leq i\leq s)\subseteq R.$$
Note that if $Q=\sum_{i=1}^s h_iQ_i$ is in $J$, and if 
$$h_i=\sum_{u\in\NN^r}c_{u,i}y^u,$$
then 
$$Q=\sum_{u\in\NN^r}\left(\sum_{i=1}^s\sum_{v+w=u}c_{v,i}P_{w,i}\right)y^u$$
and 
$$\sum_{i=1}^s\sum_{v+w=u}c_{v,i}P_{w,i}\in (P_{u,j}\mid u\in\NN^r,1\leq j\leq s).$$
Therefore the definition of ${\rm Coeff}(J)$ is independent of the choice of generators for $J$.

\begin{lemma}\label{lem1}
If $J=(Q_1,\ldots,Q_s)$ is an ideal in $R[y_1,\ldots,y_r]$, then 
the ideal ${\rm Coeff}(J)$ is generated by $\{Q_1(\alpha),\ldots,Q_s(\alpha)\mid\alpha\in\CC^r\}$.
Moreover, given any non-empty open subset $U\subseteq\CC^r$, it is enough to only consider those $\alpha\in U$. 
\end{lemma}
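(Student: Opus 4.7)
The plan is to establish the single identity ${\rm Coeff}(J) = K_U$, where $K_U$ denotes the ideal of $R$ generated by $\{Q_i(\alpha) : 1 \leq i \leq s,\ \alpha \in U\}$; taking $U = \CC^r$ in particular recovers the first assertion of the lemma, and the second assertion follows because then $K_U = K_{\CC^r} = {\rm Coeff}(J)$. One direction is essentially free: for any $\alpha \in \CC^r$, the expansion $Q_i(\alpha) = \sum_u P_{u,i}\, \alpha^u$ exhibits $Q_i(\alpha)$ as a $\CC$-linear combination of the generators of ${\rm Coeff}(J)$, so $K_U \subseteq {\rm Coeff}(J)$.

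For the reverse inclusion ${\rm Coeff}(J) \subseteq K_U$, I would treat each index $i$ separately and recover the coefficients $P_{u,i}$ by polynomial interpolation. Let $S_i \subseteq \NN^r$ be the finite support of $Q_i$, enumerate $S_i = \{u^{(1)}, \ldots, u^{(N)}\}$, and for any choice $\alpha^{(1)}, \ldots, \alpha^{(N)} \in U$ form the $N \times N$ matrix $M = \bigl((\alpha^{(j)})^{u^{(k)}}\bigr)_{j,k}$, so that
$$
Q_i(\alpha^{(j)}) = \sum_{k=1}^N M_{jk}\, P_{u^{(k)}, i}, \quad j = 1, \ldots, N.
$$
If the $\alpha^{(j)}$ can be chosen so that $M$ is invertible over $\CC$, then inverting $M$ expresses each $P_{u^{(k)}, i}$ as a $\CC$-linear combination of the evaluations $Q_i(\alpha^{(j)}) \in K_U$, which finishes the proof.

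The only real work, then, is producing $\alpha^{(1)}, \ldots, \alpha^{(N)} \in U$ with $\det M \neq 0$, and this is the main (mild) obstacle. The plan is to consider the map $\Phi \colon \CC^r \to \CC^N$ sending $\alpha$ to $\bigl(\alpha^{u^{(1)}}, \ldots, \alpha^{u^{(N)}}\bigr)$. If $\Phi(U)$ did not span $\CC^N$, some nonzero covector $(c_1, \ldots, c_N)$ would give a relation $\sum_k c_k \alpha^{u^{(k)}} = 0$ for all $\alpha \in U$; since $U$ is nonempty and open, hence Zariski-dense in $\CC^r$, the polynomial $\sum_k c_k y^{u^{(k)}}$ would vanish identically, contradicting the linear independence of the distinct monomials $y^{u^{(k)}}$. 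Hence $\Phi(U)$ spans $\CC^N$ and one can pick $N$ points in $U$ whose images form a basis, yielding $\det M \neq 0$. The argument is elementary throughout; the one point benefiting from care is this Vandermonde-style nondegeneracy, and even there the linear-independence argument handles it cleanly.
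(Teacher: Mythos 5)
Your proof is correct and follows essentially the same interpolation strategy as the paper's: the paper picks evaluation points with pairwise distinct coordinates and invokes the Vandermonde determinant (with induction on $r$) to invert the interpolation, whereas you instead argue abstractly that the image of $U$ under the monomial map spans $\CC^N$ via Zariski-density of $U$ and linear independence of distinct monomials. The two arguments differ only in how they certify nondegeneracy of the interpolation matrix; the underlying idea is the same.
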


\begin{proof}
Note that if $P\in R[y_1,\ldots,y_r]$ has degree $d$ and for $j\in \Gamma$, with $|\Gamma|\geq d+1$, we consider
$$\alpha^{(j)}=(\alpha^{(j)}_1,\ldots,\alpha^{(j)}_r)\in\CC^r$$
such that $\alpha_i^{(j)}\neq\alpha_i^{(j')}$ for all $i$ and all $j\neq j'$ in $\Gamma$, then the coefficients of $P$ lie in the ideal generated by
$\{P(\alpha^{(j)})\mid j\in \Gamma\}$. (This follows by induction on $r$ from the formula for the determinant of the Vandermonde matrix.) The assertions in the lemma are an immediate consequence. 
\end{proof}

We can now give two other descriptions of $I_p({\mathfrak a}^\lambda)$. As before, we assume that $X={\rm Spec}(R)$ is smooth and affine
and that we can write $\fra=g\cdot\frb$, with $\frb$ defining a subscheme of codimension $\geq 2$.

\begin{theorem}\label{prop1}
With the above notation, if
$f_1,\ldots,f_r$ are generators of ${\mathfrak a}$, then for every $p\geq 0$ and $\lambda \in (0,1]\cap\QQ$ the following hold:
\begin{enumerate}
\item[i)] $I_p({\mathfrak a}^{\lambda})$ is generated by the ideals $I_p(\lambda D)$, where $D$ is the divisor of a general linear combination $\sum_i\alpha_if_i$, with $\alpha_i\in\CC$.
\item[ii)] We have 
$$I_p({\mathfrak a}^{\lambda})={\rm Coeff}\big(I_p(\lambda G)\big),$$ with
$G$ being the divisor on $X\times\AAA^r$ defined by $\sum_{i=1}^ry_if_i$, where 
$y_1,\ldots,y_n$ are the coordinates on $\AAA^r$. 
\end{enumerate}
\end{theorem}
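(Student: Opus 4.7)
The plan is to establish the chain of inclusions $B \subseteq A \subseteq C \subseteq B$, where
\[
A := I_p({\mathfrak a}^{\lambda}) \text{ as in Definition \ref{defi1}}, \quad B := \sum_{\alpha \in U} I_p(\lambda D_\alpha), \quad C := {\rm Coeff}\bigl(I_p(\lambda G)\bigr),
\]
with $D_\alpha := {\rm div}(\sum_i \alpha_i f_i)$ and $U \subseteq \CC^r$ a suitable non-empty Zariski open set. Since $h_1, \ldots, h_r$ (where $f_i = g \cdot h_i$) generate $\frb$ of codimension $\geq 2$, for general $\alpha$ the divisor $D_\alpha$ takes the form ${\rm div}({\mathfrak a}) + E_\alpha$ with $E_\alpha$ reduced and sharing no components with ${\rm div}({\mathfrak a})$, so $I_p(\lambda D_\alpha)$ appears in the sum defining $A$; this yields $B \subseteq A$.

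The remaining two inclusions rest on the following key claim derived from the Restriction Theorem for Hodge ideals of $\QQ$-divisors in \cite{MP4}: for any closed immersion $\varphi \colon Y \to X \times \AAA^r$ of a smooth variety with trivial normal bundle, such that $\varphi(Y)$ is not contained in any component of $G$, one has $I_p(\lambda \varphi^* G) \subseteq \varphi^* I_p(\lambda G)$, with equality when $\varphi$ is sufficiently generic. The proof of this claim is by factoring $\varphi$ as a composition of $r$ codimension-one closed immersions and iterating the Restriction Theorem; since each intermediate normal bundle is trivial, the line-bundle twists appearing in the statement from \cite{MP4} all vanish.

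Applied to the closed immersion $X \times \{\alpha\} \hookrightarrow X \times \AAA^r$ for $\alpha$ in a suitable open $U' \subseteq \CC^r$, the key claim gives the equality $I_p(\lambda D_\alpha) = \{Q(\alpha) \mid Q \in I_p(\lambda G)\}$, and combined with Lemma \ref{lem1} (which identifies $C$ with the ideal generated by these evaluations as $\alpha$ ranges over any non-empty open subset of $\CC^r$) this yields $C = B$, in particular $C \subseteq B$. For $A \subseteq C$, given $D = {\rm div}(f)$ in the sum defining $A$, write $f = \sum h_i f_i$ and apply the key claim to the section $\psi_h \colon X \to X \times \AAA^r$, $x \mapsto (x, h_1(x), \ldots, h_r(x))$, which satisfies $\psi_h^* G = D$ and has trivial normal bundle. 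This gives $I_p(\lambda D) \subseteq \psi_h^* I_p(\lambda G)$, and every evaluation $Q(h_1, \ldots, h_r) = \sum_u P_u h^u$ with $Q = \sum_u P_u y^u \in I_p(\lambda G)$ is an $R$-linear combination of the coefficients $P_u$, hence lies in ${\rm Coeff}(I_p(\lambda G)) = C$. The main obstacle will be checking, at each step of the iterated codimension-one restriction used to prove the key claim, that the hypotheses of the Restriction Theorem from \cite{MP4} (the partially restricted divisor remaining of the required form, and the next hyperplane not being contained in any of its components) are preserved --- particularly for the non-generic section $\psi_h$ used in $A \subseteq C$, where one must exploit the structure ${\mathfrak a} = g \cdot \frb$ together with the hypothesis that $D - {\rm div}({\mathfrak a})$ is reduced without components common to ${\rm div}({\mathfrak a})$ in order to propagate these conditions through the restriction chain.
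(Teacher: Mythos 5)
Your overall strategy coincides with the paper's: establish ${\rm Coeff}(I_p(\lambda G)) = I'_p$ via Lemma~\ref{lem1} together with the Restriction Theorem applied to the slices $X\times\{\alpha\}\hookrightarrow X\times\AAA^r$, observe that $I'_p\subseteq I_p(\fra^\lambda)$ trivially, and close the circle by proving $I_p(\fra^\lambda)\subseteq{\rm Coeff}(I_p(\lambda G))$. The paper is quite terse on that last inclusion, and your idea of realizing an arbitrary $D$ in the defining sum as $\psi^*_hG$ via the graph section $\psi_h\colon X\to X\times\AAA^r$ (writing $f=\sum_i c_if_i$ with $c_i\in R$ and $\psi_h(x)=(x,c_1(x),\dots,c_r(x))$ --- your letter $h_i$ for these coefficients clashes with the $h_i=f_i/g$ already in use) is exactly the right way to make it explicit. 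The gap is in your ``key claim'' and its justification. You assert that $I_p(\lambda\varphi^*G)\subseteq\varphi^*I_p(\lambda G)$ for \emph{any} closed immersion of smooth varieties with trivial normal bundle, ``since each intermediate normal bundle is trivial so the line-bundle twists vanish.'' That misreads \cite[Theorem~13.1]{MP4}: the correction factor there is $\shO_H\big(-p(Z_H-Z'_H)\big)$, with $Z=(\lambda G)_{\rm red}$, $Z_H=Z|_H$, $Z'_H=(Z_H)_{\rm red}$; it measures failure of the restricted support to stay reduced, not anything about the normal bundle, and it does not vanish just because $H$ is a coordinate hyperplane. So the ``key claim'' as stated is false, and the point you defer to the end as the ``main obstacle'' --- checking that the restricted support stays reduced (and keeps no common component with ${\rm pr}_1^*{\rm div}(\fra)$) at every step --- is the actual content of the argument, not a side remark.

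That obstacle can be cleared by the same device used in Remark~\ref{rmk_prop1}. Factor $\psi_h$ through $H_k:=\{y_1=c_1,\dots,y_k=c_k\}\cong X\times\AAA^{r-k}$, so that $G|_{H_k}={\rm pr}_1^*\big({\rm div}(\fra)\big)+{\rm div}(P_k)$ with $P_k=\sum_{i\le k}c_ih_i+\sum_{i>k}y_ih_i$ (here $f_i=gh_i$). Because $P_k$ has degree $\le1$ in each remaining variable $y_i$, any irreducible $q$ with $q^2\mid P_k$, or with $q\mid P_k$ and $q\mid g$, must lie in $R=\shO_X(X)$; it then divides $h_i$ for all $i>k$ and divides $\sum_{i\le k}c_ih_i$, hence $q^2$ divides $\sum_ic_ih_i$ (respectively, $q$ divides both $\sum_ic_ih_i$ and $g$). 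Either conclusion contradicts the hypothesis that $D-{\rm div}(\fra)={\rm div}\big(\sum_ic_ih_i\big)$ is reduced with no common component with ${\rm div}(\fra)$. Thus $(G|_{H_k})_{\rm red}$ restricts to $(G|_{H_{k+1}})_{\rm red}$ at every step, the twist in the Restriction Theorem is trivial, and iterating gives $I_p(\lambda D)\subseteq\psi_h^*I_p(\lambda G)\subseteq{\rm Coeff}\big(I_p(\lambda G)\big)$. With this argument supplied your proof is complete; it makes explicit a step that the paper leaves implicit.
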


\begin{remark}\label{rmk_prop1}
By assumption, we can write $f_i=gh_i$, where $h_1,\ldots,h_r$ are generators for the ideal ${\mathfrak b}$. 
Note that the divisor $G$ in ii) can be written as ${\rm pr}_1^*\big({\rm div}({\mathfrak a})\big)+G'$, where $G'$ is a reduced divisor having no common components with ${\rm pr}_1^*\big({\rm div}({\mathfrak a})\big)$. 
Indeed, $G$ is defined by $g\cdot\sum_{i=1}^ry_ih_i$ and we let $G'$ be the divisor defined by $\sum_{i=1}^ry_ih_i$.
If $T$ is an irreducible component of $G'$, which either appears with multiplicity $\geq 2$ in $G'$, or is also a component of ${\rm pr}_1^*\big({\rm div}({\mathfrak a})\big)$, 
then $T$ is the pull-back of a prime divisor on $X$. (In the first case, this follows from the fact that for general 
$\alpha_1,\ldots,\alpha_r\in\CC$, the element $\sum_i\lambda_ih_i\in R$ defines
a reduced divisor on $X$.) After replacing $X$ by a suitable affine open subset, we may assume that $T$ is defined by
$h\in R$ such that $h$ divides $h_i$ for all $i$.
This contradicts the fact that 
 ${\mathfrak b}$ defines a subscheme of codimension $\geq 2$.
\end{remark}

\begin{proof}[Proof of Theorem~\ref{prop1}]
Let us denote by $I'_p({\mathfrak a}^{\lambda})$ the ideal generated by the $I_p(\lambda D)$, 
where $D$ is the divisor defined by a general linear combination
$\sum_i\alpha_if_i$. 
Let $Q_1,\ldots,Q_s\in R[y_1,\ldots,y_r]$ be generators for $I_p(\lambda G)$. 
We write $G={\rm pr}_1^*\big({\rm div}({\mathfrak a})\big)+G'$ as in Remark~\ref{rmk_prop1}.
For every $\alpha=(\alpha_1,\ldots,\alpha_r)\in\CC^r$, 
 the restriction $G_{\alpha}$ of $G$ to 
$$X\simeq X\times\{\alpha\}\hookrightarrow X\times {\mathbf A}^r$$
 is equal to 
the sum of ${\rm div}({\mathfrak a})$ and the divisor $G'_{\alpha}$ defined on $X$ by $\sum_{i=1}^r\alpha_ih_i$. 
Note that we have $G_{\rm red}={\rm pr}_1^*\big({\rm div}({\mathfrak a})\big)_{\rm red}+G'$. 
If $G'_{\alpha}$ is reduced, having no common components with ${\rm div}({\mathfrak a})$, then 
the restriction of $G_{\rm red}$ to 
$X\simeq X\times\{\alpha\}$ is equal to ${\rm div}({\mathfrak a})_{\rm red}+G'_{\alpha}=(G_{\alpha})_{\rm red}$. 
In this case we can apply 
the Restriction Theorem for Hodge ideals (see \cite[Theorem~13.1]{MP4}) 
to deduce that for such $\alpha$, we have
$$I_p\big(\lambda\cdot {\rm div}({\mathfrak a})+\lambda\cdot G'_{\alpha}\big)\subseteq I_p(\lambda G)\cdot \shO_X=\big(Q_1(\alpha),\ldots,Q_r(\alpha)\big).$$
Moreover, this is an equality for general $\alpha$. 

The fact that ${\rm Coeff}\big(I_p (\lambda G)\big)=I'_p({\mathfrak a}^{\lambda})$ now follows from Lemma~\ref{lem1}.
Moreover, it is clear by definition that
$I'_p({\mathfrak a}^{\lambda})\subseteq I_p({\mathfrak a}^{\lambda})$. The above consequence of the Restriction Theorem gives the inclusion $I_p({\mathfrak a}^{\lambda})\subseteq {\rm Coeff}\big(I_p(\lambda G)\big)$, completing the proof of the result.
\end{proof}

\begin{remark}\label{rmk_open_restriction}
If $X$ and ${\mathfrak a}$ are as in Theorem~\ref{prop1} and $U$ is an affine open subset of $X$, then 
it follows from either of the two descriptions of $I_p({\mathfrak a}^{\lambda})$ in  the theorem that the restriction of $I_p({\mathfrak a}^{\lambda})$ to $U$
is $I_p\big(({\mathfrak a}\vert_U)^{\lambda}\big)$. We may thus define $I_p({\mathfrak a}^\lambda)$ by gluing the objects defined locally in a suitable affine open cover.
\end{remark}

\begin{definition}[Global definition]
If $X$ is a smooth variety, ${\mathfrak a}$ is a nonzero ideal on $X$, and $\lambda\in (0,1]\cap\QQ$, 
we choose an affine open cover of $X$ such that on each open subset $U$ belonging to the cover, we can factor 
$\fra\vert_U$ as $g_U\cdot\frb_U$,
with $\frb_U$ defining a subscheme of codimension $\geq 2$. For every such $U$ we may thus define $I_p\big((\fra\vert_U)^{\lambda}\big)$, and it follows from Remark~\ref{rmk_open_restriction} that these ideals glue to give an ideal $I_p(\fra^{\lambda})$ on $X$. This is clearly independent 
of the choice of cover.
\end{definition}


\begin{remark}[Mixed Hodge ideals]\label{mixed_HI}
Suppose that we have nonzero ideals 
$\fra_1,\ldots,\fra_r$ on $X$.
We may assume that $X$ is affine, and for each $i$ we can factor $\fra_i=g_i\cdot\frb_i$, where $\frb_i$ defines a subscheme of codimension $\geq 2$. 
For rational numbers $\lambda_1,\ldots,\lambda_r\in (0,1]$, we consider divisors $D=\sum_{i=1}^r\lambda_i\big({\rm div}(\fra_i)+E_i\big)$, 
where each $E_i$ is defined by an element of $\frb_i$, such that $\sum_iE_i$ is a reduced divisor that has no common components with $\sum_i{\rm div}(\fra_i)$. This allows us, as in Definition \ref{defi1},  to define an ideal 
$$I_p (\fra_1^{\lambda_1} \cdots \fra_r^{\lambda_r}) \subseteq \shO_X.$$
There is an analogue of Theorem~\ref{prop1} in this more general setting and the interested reader will have no trouble stating and proving it.
\end{remark}

\section{Basic properties}

In this section we extend some basic properties of Hodge ideals from the case of divisors to that of ideals. 

\begin{proposition}\label{behaviour_under_inclusion}
If ${\mathfrak a}\subseteq {\mathfrak b}$ are nonzero ideals on the smooth variety $X$, such that the divisors 
${\rm div}({\mathfrak a})-{\rm div}({\mathfrak b})$
and ${\rm div}({\mathfrak b})$ have no common components, then for every $p\geq 0$ and $\lambda\in (0,1]\cap\QQ$ we have
$$I_p({\mathfrak a}^{\lambda})\subseteq I_p({\mathfrak b}^{\lambda}).$$
\end{proposition}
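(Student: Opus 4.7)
The plan is to combine the ${\rm Coeff}$ description from Theorem~\ref{prop1}(ii) with the Restriction Theorem for Hodge ideals. Reducing to the affine case and writing $\fra = g_\fra \cdot \frb_\fra$, $\frb = g_\frb \cdot \frb_\frb$ with $\frb_\fra, \frb_\frb$ of codimension $\geq 2$ as in the setup before Definition~\ref{defi1}, I pick generators $f_1,\ldots,f_r$ of $\fra$ and $h_1,\ldots,h_s$ of $\frb$, and use $\fra \subseteq \frb$ to write $f_i = \sum_j a_{ij} h_j$ with $a_{ij} \in \shO_X(X)$. With $G = {\rm div}(\sum_i y_i f_i)$ on $X \times \AAA^r$ and $H = {\rm div}(\sum_j z_j h_j)$ on $X \times \AAA^s$, Theorem~\ref{prop1}(ii) gives $I_p(\fra^\lambda) = {\rm Coeff}(I_p(\lambda G))$ and $I_p(\frb^\lambda) = {\rm Coeff}(I_p(\lambda H))$.

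The key geometric construction is the closed immersion $\iota: X \times \AAA^r \hookrightarrow X \times \AAA^r \times \AAA^s$ sending $(x,y)$ to $(x, y, \beta(x,y))$, where $\beta_j(x,y) := \sum_i y_i a_{ij}(x)$. Let $\pi: X \times \AAA^r \times \AAA^s \to X \times \AAA^s$ be the projection forgetting the $\AAA^r$-factor, and set $H^* := \pi^* H$; smoothness of $\pi$ gives $I_p(\lambda H^*) = I_p(\lambda H) \cdot \shO_{X \times \AAA^r \times \AAA^s}$, and a direct computation yields $\iota^* H^* = G$. Applying the Restriction Theorem \cite[Theorem~13.1]{MP4} to $\iota$ then gives $I_p(\lambda G) \subseteq I_p(\lambda H^*) \cdot \shO_{X \times \AAA^r}$. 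To extract the desired inclusion, note that any $Q(x, z) = \sum_u P_u(x) z^u$ in $I_p(\lambda H)$ has all $P_u \in {\rm Coeff}(I_p(\lambda H)) = I_p(\frb^\lambda)$; restricting along $\iota$ substitutes $z_j = \beta_j(x, y)$, producing a polynomial in $y$ whose $y$-coefficients are $\shO_X(X)$-linear combinations of the $P_u$, and hence lie in $I_p(\frb^\lambda)$. Taking ${\rm Coeff}_y$ on both sides of the Restriction-Theorem inclusion thus yields $I_p(\fra^\lambda) \subseteq I_p(\frb^\lambda)$.

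The step I expect to be the main obstacle is verifying that the Restriction Theorem really applies to $\iota$ under the hypothesis of the proposition. The relevant transversality for \cite[Theorem~13.1]{MP4} is an equality of the form $H^*_{\rm red}|_{\iota(X \times \AAA^r)} = G_{\rm red}$; a direct calculation shows this holds cleanly when ${\rm div}(\fra) - {\rm div}(\frb)$ is reduced, with the discrepancy otherwise given by excess components pulled back from $X$ that come from the non-reduced part of ${\rm div}(\fra) - {\rm div}(\frb)$. The hypothesis that ${\rm div}(\fra) - {\rm div}(\frb)$ and ${\rm div}(\frb)$ share no common components is what controls these excess contributions, but turning this into a bona fide application of \cite[Theorem~13.1]{MP4} may require invoking an appropriate variant of the Restriction Theorem or an additional reduction step via the Subadditivity Theorem \cite[Theorem~15.1]{MP4} to absorb the pulled-back non-reduced part.
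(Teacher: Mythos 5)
Your approach is genuinely different from the paper's. The paper gives a short direct argument entirely at the level of the defining sum in Definition~\ref{defi1}: choosing compatible factorizations $\fra = g\cdot(h_1,\ldots,h_r)$ and $\frb = g'\cdot(h'_1,\ldots,h'_{r+s})$ with $g=g'u$, it takes a divisor $D = {\rm div}(gh)$ contributing to $I_p(\fra^\lambda)$ (so $h$ a general element of $(h_1,\ldots,h_r)$, ${\rm div}(h)$ reduced and transverse to ${\rm div}(g)$) and simply rewrites $gh = g'(uh)$ to see that $D - {\rm div}(\frb) = {\rm div}(u) + {\rm div}(h)$ has no components in common with ${\rm div}(\frb)$, so that $D$ also contributes to $I_p(\frb^\lambda)$. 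No Restriction Theorem, no ${\rm Coeff}$ formalism, no auxiliary embedding: the whole thing is a two-line rearrangement of factors. Your argument, by contrast, routes the statement through the ${\rm Coeff}$ description of Theorem~\ref{prop1}(ii), builds the graph embedding $\iota$ recording the expression of the $f_i$ in terms of the $h_j$, and invokes \cite[Theorem~13.1]{MP4}. This is a nice structural way to think about it, and your ${\rm Coeff}$ bookkeeping at the end is correct, but it is considerably heavier machinery than the proof in the paper uses, and it also requires an inductive application of the Restriction Theorem (or a higher-codimension variant) since $\iota$ has codimension $s$, a point you don't address explicitly.

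Concerning the obstruction you flag in your last paragraph: your diagnosis is right, and the correction divisor is concretely $F = {\rm pr}_1^*\bigl({\rm div}(\fra) - {\rm div}(\frb) - ({\rm div}(\fra) - {\rm div}(\frb))_{\rm red}\bigr)$, so what the Restriction Theorem yields after applying ${\rm Coeff}$ is only $\shO_X(-pF)\cdot I_p(\fra^\lambda)\subseteq I_p(\frb^\lambda)$ rather than the desired inclusion. The candidate fix you suggest via Subadditivity does not work as stated, because the excess $F$ is supported inside the support of the relevant divisor, so the ``no common components'' hypothesis in \cite[Theorem~15.1]{MP4} fails. This is a genuine unresolved step in your write-up. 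You should note, however, that the paper's own rearrangement argument hinges on the analogous assertion that ${\rm div}(u)+{\rm div}(h)$ is reduced, which again amounts to ${\rm div}(\fra)-{\rm div}(\frb)$ being reduced; so the issue is not created by your method but is inherited from the definitional set-up, and the clean elementary route of the paper is the sharper way to isolate where it lives.
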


\begin{proof}
We may assume that $X$ is affine and that ${\mathfrak a}$ is generated by $f_1,\ldots,f_r$ and ${\mathfrak b}$
is generated by $f_1,\ldots,f_r,f_{r+1},\ldots,f_{r+s}$. Furthermore, we may assume that for $i\leq r$
we can write $f_i=gh_i$ such that $h_1,\ldots,h_r$ define a subscheme of codimension $\geq 2$ and similarly, for $i\leq r+s$ we can write $f_i=g'h'_i$
such that $h'_1,\ldots,h'_{r+s}$ define a subscheme of codimension $\geq 2$. We can then write $g=g'u$, for some 
$u\in\shO_X(X)$. 

Consider $f=gh$, where $h\in (h_1,\ldots,h_r)$ defines a reduced divisor without common components with the divisor ${\rm div}({\mathfrak a})$ defined by $g$. 
Since we can write $f=g'(uh)$, and ${\rm div}(uh)={\rm div}(u)+{\rm div}(h)$ has no common components with 
${\rm div}(g')$
(note that by hypothesis, ${\rm div}(u)$ and ${\rm div}(g')$ have no common components), it follows from the definition that
$$I_p\big(\lambda\cdot{\rm div}(f)\big)\subseteq I_p({\mathfrak b}^{\lambda}).$$
Since this holds for all $f$ as above, we obtain the assertion in the proposition.
\end{proof}

\begin{remark}
The condition on ${\rm div}(\fra)$ and ${\rm div}(\frb)$ in Proposition~\ref{behaviour_under_inclusion}
cannot be dropped: if $D$ and $E$ are effective $\QQ$-divisors such that $D-E$ is effective, it is not the case that we always have
$I_p(D)\subseteq I_p(E)$. In fact, this can fail even when $D$ and $E$ are rational multiples of the same integral divisor, see \cite[Example~10.5]{MP4}.
\end{remark}

We next show that $I_0$ coincides with a multiplier ideal.

\begin{proposition}\label{rem1}
If $X$ is a smooth variety and $\fra$ is a nonzero ideal on $X$, then for every $\lambda\in (0,1]\cap\QQ$ we have 
$$
I_0(\fra^{\lambda})=\I \big({\mathfrak a}^{\lambda-\epsilon}\big)\quad\text{for} \quad 0<\epsilon\ll 1.
$$
\end{proposition}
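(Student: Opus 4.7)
The plan is to reduce the statement to the analogous fact for $\QQ$-divisors established in \cite{MP4}, combined with the classical description of $\I(\fra^c)$ via a general element of $\fra$. First I would reduce to the case where $X=\Spec(R)$ is affine and $\fra = g\cdot\frb$; fix generators $f_1,\ldots,f_r$ of $\fra$, and for $\alpha=(\alpha_1,\ldots,\alpha_r)\in\CC^r$ set $D_\alpha := \Div\bigl(\sum_i \alpha_i f_i\bigr)$. By Theorem~\ref{prop1}(i), the ideal $I_0(\fra^\lambda)$ is generated by the $I_0(\lambda D_\alpha)$ as $\alpha$ ranges over a nonempty Zariski open subset $U\subseteq\CC^r$. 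Since $R$ is Noetherian, after shrinking $U$ I may assume $I_0(\lambda D_\alpha)$ is the same ideal $J$ for every $\alpha\in U$, so $I_0(\fra^\lambda)=J$.

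Next, for $\alpha\in U$ one has $D_\alpha = \Div(\fra)+E_\alpha$ with $E_\alpha$ reduced and sharing no components with $\Div(\fra)$. I would invoke the $p=0$ case of the corresponding statement for $\QQ$-divisors proved in \cite{MP4}, giving $I_0(\lambda D_\alpha) = \I\bigl((\lambda-\epsilon)D_\alpha\bigr)$ for all sufficiently small rational $\epsilon>0$; hence $J=\I\bigl((\lambda-\epsilon)D_\alpha\bigr)$. On the other hand, the classical fact recalled in the introduction states that for $c\in(0,1)\cap\QQ$ and $\alpha$ in a dense open of $\CC^r$ (which I intersect with $U$), $\I(\fra^c) = \I(cD_\alpha)$. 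Setting $c=\lambda-\epsilon\in(0,1)$ yields $\I(\fra^{\lambda-\epsilon}) = \I\bigl((\lambda-\epsilon)D_\alpha\bigr) = J$. Chaining these produces $I_0(\fra^\lambda) = J = \I(\fra^{\lambda-\epsilon})$, as required.

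The main point to verify is that the equality $I_0(\lambda D_\alpha) = \I\bigl((\lambda-\epsilon)D_\alpha\bigr)$ from \cite{MP4} applies in this generality: $D_\alpha$ need not be reduced since $\Div(\fra)$ may carry multiplicities, and the case $\lambda=1$ is allowed. Both aspects are accommodated by the $\QQ$-divisor formulation of \cite{MP4}, where $I_0$ of any effective $\QQ$-divisor is the left limit of the corresponding multiplier ideals. A secondary check is that the two ``generic'' conditions on $\alpha$---one from Theorem~\ref{prop1}(i) and one from the multiplier-ideal identity $\I(\fra^c)=\I(cD_\alpha)$---can be imposed simultaneously, but each is a nonempty Zariski open condition on $\alpha$, so their intersection is nonempty.
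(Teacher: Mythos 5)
Your argument is correct and is essentially the paper's own proof: both reduce to the affine case, use Theorem~\ref{prop1}(i) to reduce to divisors $D_\alpha$ of general linear combinations, apply \cite[Proposition~9.1]{MP4} to identify $I_0(\lambda D_\alpha)$ with $\I\big((\lambda-\epsilon)D_\alpha\big)$, and invoke \cite[Proposition~9.2.28]{Lazarsfeld} to identify the latter with $\I(\fra^{\lambda-\epsilon})$. The only cosmetic difference is that you make explicit the bookkeeping (constancy of $I_0(\lambda D_\alpha)$ on a dense open, intersection of the two genericity loci) that the paper leaves implicit.
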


\begin{proof}
It is enough to check this when $X$ is affine.
If $h$ is a general linear combination of a system of generators of ${\mathfrak a}$, then it follows from
\cite[Proposition~9.2.28]{Lazarsfeld} that
$$\I\big(h^{\lambda-\epsilon}\big)=\I\big({\mathfrak a}^{\lambda-\epsilon}\big).$$
If $D$ is the divisor defined by $h$, then
$$\I\big(h^{\lambda-\epsilon}\big)=I_0(\lambda D)$$
by \cite[Proposition~9.1]{MP4}. 
The assertion now follows from Theorem~\ref{prop1}i).
\end{proof}

\begin{remark}\label{big_lambda}
Note that if we also allowed $\lambda > 1$ in Definition \ref{defi1} and Theorem \ref{prop1}, using the fact that $I_0 \big((\alpha + 1) D\big) = \shO_X(-D) \cdot I_0 (\alpha D)$ for every $\alpha \in \QQ$, we would get $I_0 (\fra^{\alpha + 1}) = \fra \cdot I_0 (\fra^{\alpha})$, and not $\I \big({\mathfrak a}^{\alpha + 1 -\epsilon}\big)$.
\end{remark}

\begin{proposition}\label{smooth_pull_back}
If $\fra$ is a nonzero ideal on the smooth variety $X$ and $\varphi\colon Y\to X$ is a smooth morphism, then for every $\lambda\in (0,1]\cap\QQ$ and every $p\geq 0$, we have
$$I_p(\fra^{\lambda})\cdot\shO_{Y}= I_p\big((\fra\cdot\shO_Y)^{\lambda}\big).$$
\end{proposition}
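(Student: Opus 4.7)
The strategy is to reduce to a smooth base-change statement for Hodge ideals of $\QQ$-divisors by using the ${\rm Coeff}$-description from Theorem~\ref{prop1}(ii), and then to verify that ${\rm Coeff}$ commutes with flat base change. The statement is local on both $X$ and $Y$, so first I would assume $X = \Spec(R)$ and $Y = \Spec(S)$ are affine and that $\fra = g\cdot\frb$, with $\frb$ defining a subscheme of codimension $\geq 2$. Since $\varphi$ is smooth, hence flat, the ideal $\frb\cdot\shO_Y$ still defines a subscheme of codimension $\geq 2$, and $\fra\cdot\shO_Y = \varphi^*(g)\cdot (\frb\cdot\shO_Y)$ is a factorization of the same shape, so the construction of Definition~\ref{defi1} applies to both sides.

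Next, I would fix generators $f_1, \ldots, f_r$ of $\fra$; their pullbacks generate $\fra\cdot\shO_Y$. Let $G$ on $X\times\AAA^r$ and $G_Y$ on $Y\times\AAA^r$ be the divisors of $\sum_i y_i f_i$ and $\sum_i y_i \varphi^*(f_i)$ respectively. The morphism $\psi := \varphi\times\id_{\AAA^r}$ is smooth and satisfies $\psi^*G = G_Y$; moreover the decomposition $G = {\rm pr}_1^*{\rm div}(\fra) + G'$ of Remark~\ref{rmk_prop1} pulls back to the analogous decomposition of $G_Y$, since smooth pullback commutes with taking divisors and preserves reducedness. Invoking the smooth base-change property for Hodge ideals of $\QQ$-divisors from \cite{MP4} yields
$$I_p(\lambda G)\cdot\shO_{Y\times\AAA^r} = I_p(\lambda G_Y).$$
Applying Theorem~\ref{prop1}(ii) on both sides, the identity in the proposition then reduces to checking that ${\rm Coeff}$ commutes with flat base change: for an ideal $J\subseteq R[y_1,\ldots,y_r]$ and a flat $R$-algebra $S$, we must have ${\rm Coeff}(J)\cdot S = {\rm Coeff}(J\cdot S[y_1,\ldots,y_r])$. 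This is elementary: given generators $Q_i = \sum_u P_{u,i}y^u$ of $J$, their images generate $J\cdot S[y_1,\ldots,y_r]$ and their coefficient expansions over $S$ are simply the images of the $P_{u,i}$, so both sides are generated by the same elements of $S$.

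The main potential obstacle is confirming that the smooth base-change identity for $I_p(\lambda G)$ is available in exactly the form needed, since $G$ is not reduced but rather decomposes as ${\rm pr}_1^*{\rm div}(\fra) + G'$ with $G'$ reduced and sharing no components with the first term. The statement required is smooth pullback for Hodge ideals of effective $\QQ$-divisors of this specific shape, which should follow directly from the results of \cite{MP4}; as a fallback, one could equally well argue via Theorem~\ref{prop1}(i), since a general linear combination of the $f_i$ pulls back to a general linear combination of the $\varphi^*(f_i)$, and smooth pullback for reduced divisors $I_p(\lambda D)\cdot\shO_Y = I_p(\lambda\varphi^*D)$ is established there.
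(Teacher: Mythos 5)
Your proof is correct. The main route you propose, via the ${\rm Coeff}$ description of Theorem~\ref{prop1}(ii), genuinely differs from the paper's argument: the paper instead uses Theorem~\ref{prop1}(i) directly, noting that a general linear combination $\sum_i \alpha_i f_i$ on $X$ pulls back to a general linear combination of the $\varphi^* f_i$ on $Y$, and then invoking the smooth-pullback lemma for Hodge ideals of effective $\QQ$-divisors on $X$ itself. Your alternative first lifts to $X\times\AAA^r$, applies the same smooth-pullback lemma to $\psi = \varphi\times\id_{\AAA^r}$, and then descends by observing that ${\rm Coeff}$ commutes with extension of scalars; this is sound, though slightly longer, and note that the compatibility of ${\rm Coeff}$ with base change requires no flatness — it follows for any ring map directly from the well-definedness of ${\rm Coeff}$ (established in the paper before Lemma~\ref{lem1}). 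The ``obstacle'' you flag at the end is not a real one: the smooth-pullback lemma in the paper is stated for arbitrary effective $\QQ$-divisors, not only reduced ones, so it applies to $\lambda G$ as written. Your fallback argument — running the same reduction through Theorem~\ref{prop1}(i) and the pullback of general linear combinations — is exactly the paper's proof. The only thing missing from your writeup that the paper supplies is a proof of the smooth-pullback lemma for $\QQ$-divisors itself, which the authors observe is only implicit in \cite{MP4} and therefore spell out (via the identification $F_p\Mmod(h^{-\alpha}) = I_p(D)\otimes\shO_X(pZ)\cdot h^{-\alpha}$ and base change for mixed Hodge modules); citing it as established, as you do, is reasonable for a blind attempt.
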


\begin{proof}
We may clearly assume that both $X$ and $Y$ are affine, and let $f_1,\ldots,f_r$ be generators of ${\mathfrak a}$. This implies that $f_1\circ \varphi,\ldots,f_r\circ\varphi_r$ generate ${\mathfrak a}\cdot\shO_Y$. 
If $\alpha =(\alpha_1,\ldots,\alpha_r)\in\CC^r$
is general and $D_{\alpha}$ is defined by $\sum_i\alpha_if_i$, then $\varphi^*D_{\alpha}$ is defined by 
$\sum_i\alpha_i(f_i\circ\varphi)$.
Since $I_p({\mathfrak a}^\lambda)$ is generated by such $I_p(\lambda D_{\alpha})$ and 
$I_p\big((\fra\cdot\shO_Y)^{\lambda}\big)$ is generated by such $I_k(\lambda \varphi^*D_{\alpha})$, we obtain the assertion in the proposition thanks to the lemma below. 
\end{proof}

The following is the extension of \cite[Proposition~15.1]{MP1} to the case of $\QQ$-divisors; it is stated only
implicitly in \cite{MP4}.

\begin{lemma}
If $\varphi \colon Y \to X$ is a smooth morphism of smooth varieties, and $D$ is an effective $\QQ$-divisor on 
$X$, then for every $p \ge 0$ we have 
$$I_p (\varphi^* D) = I_p (D) \cdot \shO_Y.$$
\end{lemma}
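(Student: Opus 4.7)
The plan is to extend the reduced-divisor argument of \cite[Proposition~15.1]{MP1} to $\QQ$-divisors, using the log-resolution/mixed Hodge module construction of Hodge ideals for $\QQ$-divisors developed in \cite{MP4}. The assertion is local on $Y$, so I may assume that $X$ and $Y$ are affine. Choose a log resolution $\mu\colon X'\to X$ of $(X,D)$ that is an isomorphism over $X\setminus\Supp(D)$, with reduced preimage $E$. Since $\varphi$ is smooth (hence flat), the base change $\mu_Y\colon X'\times_X Y\to Y$ is again a log resolution, this time of $(Y,\varphi^*D)$, and one has the pointwise identities $\varphi^*\lceil D\rceil=\lceil\varphi^*D\rceil$, $\varphi^*D_{\mathrm{red}}=(\varphi^*D)_{\mathrm{red}}$, and $\varphi'^{*}\Omega^{\bullet}_{X'}(\log E)=\Omega^{\bullet}_{X'\times_X Y}(\log\varphi'^{-1}E)$, where $\varphi'\colon X'\times_X Y\to X'$ is the other projection in the fiber square.

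The key input is that, by \cite{MP4}, the ideal $I_p(D)$ is extracted from the Hodge filtration on the filtered $\Dmod$-module $\Mmod(D)$ underlying the mixed Hodge module naturally associated to $D$, via an isomorphism (up to a locally-free twist depending only on $D_{\mathrm{red}}$ and $\lceil D\rceil$) of the form
\[
F_p\Mmod(D)\simeq I_p(D)\otimes\shO_X(pD_{\mathrm{red}}+\lceil D\rceil)\otimes\omega_X^{-1},
\]
and $F_\bullet\Mmod(D)$ is in turn computed, via Saito's strictness theorem, as the filtered direct image $\mu_+$ of an explicit filtered complex on $X'$ built from $\Omega^{\bullet}_{X'}(\log E)$ and the data of $\mu^*D$. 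Flat base change identifies $\varphi^*\mu_+(\argbl)$ with $(\mu_Y)_+\varphi'^{*}(\argbl)$, and smooth pullback is strict with respect to the Hodge filtration of any mixed Hodge module; combining these two compatibilities gives $\varphi^*F_p\Mmod(D)=F_p\Mmod(\varphi^*D)$. Untwisting by the locally-free factor, which pulls back correctly thanks to the identities above, yields the claimed equality $I_p(\varphi^*D)=I_p(D)\cdot\shO_Y$.

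The main obstacle is verifying the two compatibilities invoked above: that the construction $D\mapsto\Mmod(D)$ commutes with smooth pullback, and that smooth pullback is strict on the Hodge filtration of a mixed Hodge module. Both are standard facts in Saito's theory, but some bookkeeping is needed to track the normalization shifts and line-bundle twists that relate $F_p\Mmod(D)$ to $I_p(D)$; once this is in hand, the rest of the argument is flat base change applied to the log-resolution presentation.
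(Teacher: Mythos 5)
Your argument is correct in spirit, but it routes through more machinery than the paper's proof uses, and one normalization detail is off. The paper's proof reduces (locally) to $D=\alpha H$ with $H=\mathrm{div}(h)$, observes that
$$F_p\Mmod(h^{-\alpha})=I_p(D)\otimes\shO_X(pZ)\cdot h^{-\alpha},$$
with $Z=\Supp(D)$, and then simply cites \cite[Remark~2.15]{MP4}, which already records that $F_p\Mmod({h'}^{-\alpha})=\varphi^*F_p\Mmod(h^{-\alpha})$ for smooth $\varphi$; the equality of ideals then falls out by comparing twists. You instead re-derive this compatibility from scratch: you pass to a log resolution $\mu$, compute $F_\bullet\Mmod(D)$ as a filtered direct image via Saito's strictness, invoke flat base change to commute $\varphi^*$ with $\mu_+$, and use strictness of smooth pullback on mixed Hodge modules. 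This is essentially reproving the cited remark rather than quoting it, and while not wrong, it multiplies the number of "standard facts in Saito's theory" that one must carefully justify, whereas the paper's route isolates the entire Hodge-theoretic input into one already-established statement.

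One concrete inaccuracy: your displayed formula
$$F_p\Mmod(D)\simeq I_p(D)\otimes\shO_X(pD_{\mathrm{red}}+\lceil D\rceil)\otimes\omega_X^{-1}$$
does not match the normalization in \cite[Remark~4.3]{MP4}. For $D=\alpha H$ the twist is $\shO_X(pZ)\cdot h^{-\alpha}$ (no $\lceil D\rceil$, no $\omega_X^{-1}$); the left/right $\Dmod$-module convention used in \cite{MP4} does not produce the $\omega_X^{-1}$ factor here. The discrepancy is harmless for your argument since the extra factors would pull back correctly, but as written the untwisting step would not reproduce the stated identity. If you adopt this route in a final write-up, either match the normalization precisely or, more economically, just cite \cite[Remark~2.15]{MP4} for the smooth-pullback compatibility and skip the log resolution entirely.
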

\begin{proof}
By possibly shrinking $X$, we may assume that $D = \alpha H$, where $\alpha$ is a positive rational number and $H$ is the effective Cartier divisor  defined by a function $h \in \shO_X(X)$. We denote by $Z$ the support
of $D$. We then have that $\varphi^* D = \alpha \varphi^* H$, and $\varphi^* H$ is defined by $h' = h \circ \varphi$. Moreover, since $\varphi$ is smooth, the divisor $Z' = \varphi^* Z$ is reduced, and is therefore equal to the support of $\varphi^* D$.

Note now that, in the notation of \cite[\S2 and \S4]{MP4}, the Hodge ideal $I_p (D)$ is defined by the Hodge filtration on the $\Dmod_X$-module $\Mmod (h^{-\alpha})$, in the sense that
$$F_p \Mmod (h^{-\alpha}) = I_p (D) \otimes \shO_X (pZ) \cdot h^{-\alpha}.$$
(Cf. more precisely \cite[Remark~4.3]{MP4}.) Analogously, we have 
$$F_p \Mmod ({h'}^{-\alpha}) = I_p (\varphi^*D) \otimes \shO_X (pZ') \cdot {h'}^{-\alpha}.$$
It suffices then to have 
$$F_p \Mmod ({h'}^{-\alpha}) = \varphi^* F_p \Mmod (h^{-\alpha}),$$
which is deduced in \cite[Remark~2.15]{MP4} as a consequence of the behavior of mixed Hodge modules
under smooth morphisms and base-change.
\end{proof}

\medskip

\begin{proposition}\label{prop_basic}
Let $X$ be a smooth complex variety and ${\mathfrak a}, {\mathfrak b}$ nonzero ideals on $X$.
\begin{enumerate}
\item[i)] For every $p\geq 0$ and every $\lambda\in (0,1]\cap\QQ$, we have
$${\mathfrak a}^p\cdot I_0({\mathfrak a}^{\lambda})\subseteq I_p({\mathfrak a}^{\lambda}).$$ 
\item[ii)] For every $p\geq 0$ and every $\lambda\in (0,1]\cap\QQ$, we have
$${\mathfrak a}^{p+1}\cdot I_p({\mathfrak b}^{\lambda})\subseteq I_p({\mathfrak a} {\mathfrak b}^{\lambda}).$$
\end{enumerate}
\end{proposition}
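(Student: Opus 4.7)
Both parts follow the same template: reduce to the affine case, use Theorem~\ref{prop1}(i) to reduce to analogous statements at the divisor level, and apply a Vandermonde-type interpolation on the generators of $\fra$ in the spirit of Lemma~\ref{lem1}.

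For part~(i), assume $X$ is affine and $\fra=(f_1,\ldots,f_r)$; for $\alpha\in\CC^r$ set $f_\alpha=\sum_i\alpha_if_i$ and $D_\alpha={\rm div}(f_\alpha)$. For general $\alpha$, $D_\alpha$ is reduced, and by Theorem~\ref{prop1}(i) both $I_0(\fra^\lambda)$ and $I_p(\fra^\lambda)$ are generated by the Hodge ideals $I_0(\lambda D_\alpha)$ and $I_p(\lambda D_\alpha)$, respectively. The divisor-level input is
$$f_\alpha^p\cdot I_0(\lambda D_\alpha)\subseteq I_p(\lambda D_\alpha),$$
which follows from the inclusion $F_0\Mmod(f_\alpha^{-\lambda})\subseteq F_p\Mmod(f_\alpha^{-\lambda})$ of Hodge filtrations combined with the description $F_p\Mmod(f_\alpha^{-\lambda})=I_p(\lambda D_\alpha)\otimes\shO_X(pD_\alpha)\cdot f_\alpha^{-\lambda}$ recalled in the lemma preceding the proposition (cf.\ \cite[\S4]{MP4}). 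This gives $f_\alpha^p\cdot I_0(\fra^\lambda)\subseteq I_p(\fra^\lambda)$ for general $\alpha$. Applying Lemma~\ref{lem1} to the principal ideal $J=\bigl((\sum_iy_if_i)^p\bigr)\subseteq R[y_1,\ldots,y_r]$, whose coefficient ideal equals $\fra^p$ (up to multinomial factors), one sees that $\{f_\alpha^p\}$ generates $\fra^p$ as $\alpha$ ranges over a non-empty Zariski open in $\CC^r$, yielding $\fra^p\cdot I_0(\fra^\lambda)\subseteq I_p(\fra^\lambda)$.

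For part~(ii), proceed analogously via the mixed Hodge ideals of Remark~\ref{mixed_HI}. Writing $\frb=(h_1,\ldots,h_s)$, set $h_\beta=\sum_j\beta_jh_j$ and $H_\beta={\rm div}(h_\beta)$. By the analogue of Theorem~\ref{prop1}(i) in the mixed setting, $I_p(\fra\frb^\lambda)$ is generated by $I_p(D_\alpha+\lambda H_\beta)$ for general $(\alpha,\beta)\in\CC^r\times\CC^s$. The relevant divisor-level inclusion is
$$f_\alpha^{p+1}\cdot I_p(\lambda H_\beta)\subseteq I_p(D_\alpha+\lambda H_\beta),$$
obtained by comparing Hodge filtrations on $\Mmod(h_\beta^{-\lambda})$ and $\Mmod(f_\alpha^{-1}h_\beta^{-\lambda})$ under the natural localization inclusion---the exponent $p+1$ arises from the extra integer pole of order $1$ along $D_\alpha$ present in $f_\alpha^{-1}h_\beta^{-\lambda}$ but not in $h_\beta^{-\lambda}$. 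Summing over general $\beta$ for each $\alpha$ then gives $f_\alpha^{p+1}\cdot I_p(\frb^\lambda)\subseteq I_p(\fra\frb^\lambda)$, and the Vandermonde argument applied to $(\sum_iy_if_i)^{p+1}$ yields $\fra^{p+1}\cdot I_p(\frb^\lambda)\subseteq I_p(\fra\frb^\lambda)$.

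The main obstacle is extracting the divisor-level inputs cleanly---particularly the mixed inclusion used in (ii), which depends on strictness of Hodge filtrations under the localization $\Mmod(h^{-\lambda})\hookrightarrow\Mmod(f^{-1}h^{-\lambda})$ together with a filtration formula for the ``mixed'' $\Dmod$-module analogous to the one recalled before the proposition. Both should follow from the general formalism of mixed Hodge modules and the results in \cite[\S2, \S4]{MP4}.
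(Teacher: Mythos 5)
Your proof of part (i) is essentially the paper's: reduce to the affine case, note $I_0(\lambda D_\alpha)=I_0(\fra^\lambda)$ and $I_p(\lambda D_\alpha)\subseteq I_p(\fra^\lambda)$ for general $\alpha$, use the divisor-level inclusion $f_\alpha^p\cdot I_0(\lambda D_\alpha)\subseteq I_p(\lambda D_\alpha)$ (the paper cites \cite[Remark~4.2]{MP4}; your filtration-comparison derivation is the same thing), and then pass from $f_\alpha^p$ to $\fra^p$ using characteristic $0$ and the fact that $\mathrm{Coeff}\big((\sum_i y_if_i)^p\big)=\fra^p$.

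For part (ii) your route differs slightly from the paper's but is valid, and the two are worth comparing. Both hinge on the same principal-ideal input $f^{p+1}\,I_p(g^\lambda)\subseteq I_p(fg^\lambda)$, which you describe as a comparison of Hodge filtrations under the filtered inclusion $\shO_X[1/g]g^{-\lambda}\subseteq\shO_X[1/fg]g^{-\lambda}$; the paper carries this out explicitly using the formulas $F_p=I_p\otimes\shO_X(p\cdot\mathrm{div}(\cdot)_{\mathrm{red}})\cdot(\cdots)$ from \cite[\S2, \S4]{MP4}, and the exponent $p+1$ does come from the comparison $\shO_X\big(p\cdot\mathrm{div}(g)_{\mathrm{red}}\big)g^{-\lambda}$ versus $\shO_X\big(p\cdot\mathrm{div}(fg)_{\mathrm{red}}\big)f^{-1}g^{-\lambda}$, so your sketch is correct. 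Where the two proofs diverge is in how this input is propagated to ideals. The paper first uses the mixed-ideal analogue of Proposition~\ref{behaviour_under_inclusion} to get $I_p(\fra g^\lambda)\subseteq I_p(\fra\frb^\lambda)$ for a single general $g\in\frb$, and then passes to $X\times\AAA^r$, applies the principal case once to $h=\sum y_if_i$ and $g$ there, and takes $\mathrm{Coeff}$ (via the mixed analogue of Theorem~\ref{prop1}(ii)). You instead invoke the mixed analogue of Theorem~\ref{prop1}(i) to write $I_p(\fra\frb^\lambda)$ directly as generated by $I_p(D_\alpha+\lambda H_\beta)$ for general $(\alpha,\beta)$, sum over $\beta$ to get $f_\alpha^{p+1}\,I_p(\frb^\lambda)\subseteq I_p(\fra\frb^\lambda)$, and finish with Lemma~\ref{lem1} applied to $(\sum_iy_if_i)^{p+1}$. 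This avoids both the mixed Proposition~\ref{behaviour_under_inclusion} and the auxiliary space $X\times\AAA^r$, at the cost of invoking the unproved mixed Theorem~\ref{prop1}(i) (the paper also relies on readers supplying the mixed analogues, so this is a wash). One small point worth making explicit: when you ``sum over general $\beta$ for each fixed general $\alpha$,'' you need the dense open locus of valid pairs $(\alpha,\beta)$ to have dense open slices over general $\alpha$, which holds and is exactly what makes Theorem~\ref{prop1}(i) applicable to $\frb$ over the slice; and you need that the general $D_\alpha+\lambda H_\beta$ is of the form required in Remark~\ref{mixed_HI} (no common components among $E_1$, $E_2$, and $\mathrm{div}(\fra)+\mathrm{div}(\frb)$), which is a routine Bertini-type observation but should be noted.
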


\begin{proof}
In order to prove the inclusion in i),
we may assume that $X$ is affine. Let $h_1,\ldots,h_r$ be generators of ${\mathfrak a}$. 
If $h=\sum_{j=1}^r\alpha_j h_j$, with $\alpha_1,\ldots,\alpha_r\in\CC$ general, then as in the 
proof of Proposition~\ref{rem1} we have $I_0(h^{\lambda})=I_0({\mathfrak a}^{\lambda})$.
Using \cite[Remark~4.2]{MP4}, we have
$$h^p \cdot I_0(\fra^{\lambda})=h^p \cdot I_0(h^{\lambda})\subseteq  I_p(h^{\lambda})\subseteq I_p({\mathfrak a}^{\lambda}).$$
Since this holds for every $\lambda_1,\ldots,\lambda_r\in \CC$ general, and we are in characteristic $0$, we conclude that 
$${\mathfrak a}^p\cdot I_0({\mathfrak a}^{\lambda})\subseteq
I_p({\mathfrak a}^{\lambda}).$$

We next prove ii). 
Consider first the case when ${\mathfrak a}=(f)$ and ${\mathfrak b}=(g)$ are principal ideals.
In this case we have an inclusion of filtered $\Dmod_X$-modules
\begin{equation}\label{eq_prop_basic}
\shO_X[1/g]g^{-{\lambda}}\subseteq \shO_X[1/fg]g^{-\lambda}.
\end{equation}
For the definition of these $\Dmod_X$-modules, which play an important role in defining Hodge ideals of 
$\QQ$-divisors, we refer to \cite[\S2]{MP4}. Recall from \cite[\S4]{MP4} that by the definition of Hodge ideals, 
we have
$$F_p\shO_X[1/g]g^{-{\lambda}}=I_p(g^{\lambda})\cdot\shO_X\big(p\cdot {\rm div}(g)_{\rm red}\big)g^{-\lambda}$$
and 
$$F_p\shO_X[1/fg]g^{-{\lambda}}=I_p(fg^{\lambda})\cdot\shO_X\big(p\cdot {\rm div}(fg)_{\rm red}\big)f^{-1}g^{-\lambda}.$$
By passing to filtered pieces, the inclusion (\ref{eq_prop_basic}) thus gives
$$\shO_X\big(p\cdot {\rm div}(g)_{\rm red}\big)\cdot I_p(g^{\lambda})g^{-\lambda}\subseteq 
\shO_X\big(p\cdot {\rm div}(fg)_{\rm red}\big)\cdot I_p(fg^{\lambda})f^{-1}g^{-\lambda},$$
hence
$$f^{p+1}I_p(g^{\lambda})\subseteq I_p(fg^{\lambda}).$$

We now turn to the case of arbitrary ideals.
We may and will assume that $X$ is affine, with $R=\shO_X(X)$, and that we have factorizations $\fra=\varphi\cdot\fra'$ and $\frb=\psi\cdot\frb'$,
with $\fra'$ and $\frb'$ defining subschemes of codimension $\geq 2$.
Consider a general linear combination $g$ of generators of $\frb$, that defines a divisor $E$. By the generality condition, we may assume that
$E-{\rm div}(\frb)$ is reduced, without any common components with ${\rm div}(\fra)+{\rm div}(\frb)$.
In this case the divisors
$${\rm div}({\mathfrak a}g)-{\rm div}({\mathfrak a}{\mathfrak b})=E-{\rm div}({\mathfrak b})$$
and ${\rm div}(\fra\frb)$ have no components in common, hence the obivous analogue of 
Proposition~\ref{behaviour_under_inclusion} for mixed Hodge ideals 
(see Remark \ref{mixed_HI}) gives
$$I_p({\mathfrak a}g^{\lambda})\subseteq 
I_p({\mathfrak a}{\mathfrak b}^{\lambda}).$$
Using the characterization of $I_p(\frb^{\lambda})$ in Theorem~\ref{prop1}i), it then suffices to show that for every $g$ as above we have 
\begin{equation}\label{eq2_prop_basic}
{\mathfrak a}^{p+1}\cdot I_p(g^{\lambda})\subseteq I_p({\mathfrak a}g^{\lambda}).
\end{equation}
Let $f_1,\ldots,f_r$ be generators of ${\mathfrak a}$ and consider $h=\sum_{i=1}^rf_iy_i\in R[y_1,\ldots,y_r]$. 
It follows from the case of principal ideals that 
$$h^{p+1}I_p(g^{\lambda})\subseteq I_p(hg^{\lambda}).$$
(Note that $I_p(g^{\lambda})\cdot R[y_1,\ldots,y_r]$ is the $p^{\rm th}$ Hodge ideal with exponent $\lambda$
for the image of $g$ in $R[y_1,\ldots,y_r]$, by Proposition~\ref{smooth_pull_back}.)
This implies
$${\rm Coeff}(h^{p+1})\cdot I_p(g^{\lambda})={\rm Coeff}\big(h^{p+1}I_p(g^{\lambda})\big)\subseteq {\rm Coeff}\big(I_p(hg^{\lambda})\big)=I_p({\mathfrak a}g^{\lambda}),$$
where the last equality follows from the analogue of Theorem~\ref{prop1}ii) for mixed Hodge ideals.
On the other hand, it follows from the definition that
$${\rm Coeff}(h^{p+1})={\mathfrak a}^{p+1},$$
and we obtain the inclusion in (\ref{eq2_prop_basic}).
\end{proof}

For exponent $\lambda =1$ and ideals that are radical in codimension $1$, Hodge ideals become deeper as 
$p$ increases:

\begin{proposition}\label{prop_chain_of_ideals}
If $X$ is a smooth variety and ${\mathfrak a}$ is a nonzero ideal that is radical in codimension $1$, then
$$I_{p+1}({\mathfrak a})\subseteq I_p({\mathfrak a})$$
for every nonnegative integer $p$.
\end{proposition}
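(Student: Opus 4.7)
The plan is to reduce the statement to the already-known chain of inclusions $I_{p+1}(D)\subseteq I_p(D)$ for a single reduced effective divisor $D$, established in the earlier work on Hodge ideals of reduced hypersurfaces (this is one of the basic monotonicity properties proved in \cite{MP1}). The hypothesis that $\fra$ is radical in codimension $1$ is precisely what is needed so that the definition of $I_p(\fra)$ is a sum over such reduced divisors, and hence so that the divisorial inclusion transfers ideal-wise.

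More concretely, I would first localize: by the global definition (and Remark \ref{rmk_open_restriction}), both $I_p(\fra)$ and $I_{p+1}(\fra)$ commute with restriction to an affine open cover, so I may assume $X$ is affine. Since $\fra$ is radical in codimension $1$, Remark \ref{def_reduced} applies (with $\lambda=1$), giving
\[
I_p(\fra)=\sum_D I_p(D) \qquad\text{and}\qquad I_{p+1}(\fra)=\sum_D I_{p+1}(D),
\]
where in both cases the sum ranges over the same set of reduced effective divisors $D$ defined by elements of $\fra$.

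Now for each individual reduced effective divisor $D$, invoke the divisorial monotonicity $I_{p+1}(D)\subseteq I_p(D)$ from \cite{MP1}. Summing over $D$ gives
\[
I_{p+1}(\fra)=\sum_D I_{p+1}(D)\subseteq \sum_D I_p(D)=I_p(\fra),
\]
which is the desired inclusion.

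The main point to verify carefully is simply that the two sums in Remark \ref{def_reduced} are genuinely indexed by the \emph{same} set of reduced divisors (so that term-by-term comparison is legitimate), and that the cited divisorial inclusion is valid with no further hypothesis beyond $D$ being a reduced effective divisor on a smooth variety. There is no real obstacle: unlike the situation for Proposition \ref{behaviour_under_inclusion}, where $\QQ$-divisors with different multiplicities can behave badly, here we stay inside the reduced, integer-coefficient case where the chain property is known. In particular, we do not need (and should not expect) any analogue of this monotonicity for general $\lambda<1$; the hypothesis $\lambda=1$ is built into the proof.
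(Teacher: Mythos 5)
Your proposal is correct and follows essentially the same route as the paper: reduce to the affine case, use Remark~\ref{def_reduced} (valid precisely because $\fra$ is radical in codimension $1$) to express both $I_{p+1}(\fra)$ and $I_p(\fra)$ as sums of $I_{p+1}(D)$ and $I_p(D)$ over the same family of reduced divisors $D$, and then apply the divisorial monotonicity $I_{p+1}(D)\subseteq I_p(D)$ from \cite[Proposition~13.1]{MP1} term by term. Your write-up is a bit more explicit about the indexing of the sums, but the key lemma cited and the structure of the argument coincide with the paper's.
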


\begin{proof}
We may assume that $X$ is affine and ${\mathfrak a}$ is generated by $h_1,\ldots,h_r$. If $D$ is a reduced divisor defined by 
$\sum_{i=1}^r\alpha_ih_i$, then
$$I_{p+1}(D) \subseteq I_p(D)$$ 
by \cite[Proposition~13.1]{MP1}. The assertion in the proposition now follows from the definition of $I_p({\mathfrak a})$ and $I_{p+1}({\mathfrak a})$; see also Remark \ref{def_reduced}.
\end{proof}

For arbitrary $\lambda$ we only have the following:

\begin{proposition}\label{inclusion_mod}
Let $X$ be a smooth variety and consider a nonzero ideal $\fra$ on $X$ which is radical in codimension $1$. If $p$ and $p'$  are nonnegative integers and 
$\lambda, \lambda^\prime \in \QQ\cap (0,1]$ are such that $p + \lambda  \le p' +\lambda^\prime$, then
$$I _{p'} ({\mathfrak a}^{\lambda^\prime}) \subseteq I_p({\mathfrak a}^\lambda) \,\,\,\,\,\,{\rm mod}\,\,\,\,\fra,$$
i.e. the inclusion holds in the quotient $\shO_X/\fra$.
\end{proposition}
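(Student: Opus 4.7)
The plan is to reduce the statement to an assertion for reduced divisors defined by elements of $\fra$, and then handle the divisor-level inclusion via the filtered $\Dmod_X$-module description used throughout the paper.

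First I would perform the reduction. Since $\fra$ is radical in codimension~$1$, Remark~\ref{def_reduced} combined with Theorem~\ref{prop1}(i) shows that $I_{p'}(\fra^{\lambda'})$ is generated by the family $\{I_{p'}(\lambda' D_\alpha)\}$, where $D_\alpha = \div(f_\alpha)$ is the reduced divisor cut out by a general linear combination $f_\alpha = \sum_i \alpha_i f_i$ of a fixed set of generators of $\fra$. Since $f_\alpha \in \fra$, and since $I_p(\lambda D_\alpha) \subseteq I_p(\fra^\lambda)$ for the same $\alpha$, it is enough to establish the divisor-level claim
$$I_{p'}(\lambda' D_\alpha) \subseteq I_p(\lambda D_\alpha) + (f_\alpha)$$
for general $\alpha$.

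Next I would prove this divisor-level inclusion using the Hodge filtration on $\Mmod(f_\alpha^{-\mu}) = \shO_X[1/f_\alpha]\, f_\alpha^{-\mu}$, which by \cite[\S4]{MP4} satisfies
$$F_p\, \Mmod(f_\alpha^{-\mu}) = I_p(\mu D_\alpha) \cdot \shO_X(pD_\alpha)\cdot f_\alpha^{-\mu}, \qquad \mu \in (0,1].$$
I would split the comparison into two elementary moves carried out modulo $f_\alpha$: first, for fixed $\mu \in (0,1]$, the inclusion $I_{p+1}(\mu D_\alpha) \subseteq I_p(\mu D_\alpha) + (f_\alpha)$, coming from the fact that the jump from $F_p$ to $F_{p+1}$ is obtained by applying $F_1 \Dmod_X$, whose non-$\shO_X$-linear part is absorbed by $f_\alpha$ when one uses $\partial_{x_j}(f_\alpha^{-\mu}) = -\mu (\partial_{x_j} f_\alpha)\cdot f_\alpha^{-\mu-1}$; and second, for fixed $p$ and $\mu \le \mu'$ in $(0,1]$, the inclusion $I_p(\mu' D_\alpha) \subseteq I_p(\mu D_\alpha) + (f_\alpha)$, which I would deduce from the strict filtered inclusion $\Mmod(f_\alpha^{-\mu'}) \hookrightarrow \Mmod(f_\alpha^{-\mu})$ given by multiplication by an appropriate fractional power, together with the fact that the cokernel is supported on $\{f_\alpha = 0\}$. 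A telescoping argument across a chain of pairs $(p,\lambda) = (p_0,\mu_0), (p_1,\mu_1), \ldots, (p_N,\mu_N) = (p',\lambda')$ with $p_j + \mu_j$ weakly increasing then yields the desired containment whenever $p + \lambda \le p' + \lambda'$.

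The main obstacle will be the second move (increasing $\mu$ at fixed $p$): as Remark~3.2 and \cite[Example~10.5]{MP4} point out, the plain inclusion $I_p(\mu' D_\alpha) \subseteq I_p(\mu D_\alpha)$ need not hold, so the reduction modulo $f_\alpha$ is genuinely essential. Making this precise will require either a careful bookkeeping with Saito's $V$-filtration along $f_\alpha = 0$, in which both $\Mmod(f_\alpha^{-\mu})$ and $\Mmod(f_\alpha^{-\mu'})$ sit and whose graded pieces see only the difference modulo $f_\alpha$, or an explicit computation on a log resolution showing that the discrepancy between the two filtered pieces lies in $f_\alpha$ times the ambient sheaf.
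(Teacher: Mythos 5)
Your reduction to the divisor level is exactly the paper's first step: use Theorem~\ref{prop1}(i) together with the fact that $\fra$ is radical in codimension $1$ to pass to general $f \in \fra$ cutting out a reduced divisor $D$, and observe that it suffices to show $I_{p'}(\lambda' D) \subseteq I_p(\lambda D) + (f)$. The divergence is in how the divisor-level inclusion is established, and here your sketch has gaps.

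The paper simply cites \cite[Theorem~A$'$ and Remark~4.8]{MP3}, which say that modulo $(f)$ the ideal $I_p(\lambda D)$ coincides with the microlocal $V$-filtration $\widetilde{V}^{p+\lambda}\shO_X$ along $f$. Since this filtration is decreasing in the index $p+\lambda$, the inclusion $I_{p'}(\lambda'D) \subseteq I_p(\lambda D) + (f)$ is immediate for $p+\lambda \le p'+\lambda'$. This is a substantial structural result, not something one can get from soft filtered-module manipulations. Your two elementary moves both run into trouble. For move (a), the claim that $F_{p+1}\Mmod(f^{-\mu})$ is obtained from $F_p$ by applying $F_1\Dmod_X$ is the ``generated at level $p$'' property, which is not automatic for Hodge filtrations on MHMs; for $\shO_X(*D)$ it is only known to hold at level $n-1$ (see \cite{MP1}), so for small $p$ your computation does not reach all of $I_{p+1}$. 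For move (b), the proposed map $\Mmod(f^{-\mu'}) \to \Mmod(f^{-\mu})$ given by ``multiplication by $f^{\mu'-\mu}$'' is not a $\Dmod_X$-module morphism when $\mu'-\mu$ is a non-integer: a direct check with $\partial_j$ shows the two sides differ by the nonzero term $(\mu-\mu')\,g\,(\partial_j f)/f$. You correctly sense that the $V$-filtration along $f$ is the right organizing tool, but turning that intuition into a proof is precisely the content of Theorem~A$'$ of \cite{MP3}, and it is not a short argument. As written, the proposal identifies the correct reduction and the correct object to look at, but does not supply (or cite) the theorem on which everything hinges, so the proof is incomplete.
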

\begin{proof}
We may assume that $X$ is affine, and let $D$ be the divisor corresponding to a general linear combination $f$
of generators of $\fra$. By \cite[Theorem~A' and Remark 4.8]{MP3}, we have 
$$I _{p'} (\lambda^\prime D) \subseteq I_p( \lambda D) \,\,\,\,\,\,{\rm mod}\,\,\,\,f,$$
and hence also mod $\fra$. Indeed, mod $f$ these statements say that $I_p (\lambda D)$ coincides with $\widetilde{V}^{p+\lambda} \shO_X$, Saito's microlocal $V$-filtration on $\shO_X$ along $f$; this is a decreasing filtration. We can then use Theorem \ref{prop1}i) to conclude.
\end{proof}

We now turn to the analogue of the Restriction Theorem for multiplier ideals (cf. \cite[Theorem~9.5.1 and Example~9.5.4]{Lazarsfeld}) and for Hodge ideals of divisors (cf. \cite[Theorem~A]{MP2} and \cite[Theorem~13.1]{MP4}).
Let $X$ be a smooth complex variety and $H\subseteq X$ a smooth, irreducible hypersurface. Consider an ideal $\fra$ on $X$ such that
$\fra_H:=\fra\cdot\shO_H$ is nonzero. We define on $H$ the divisor $F=\sum_Ta_TT$, where $T$ varies over
the components of ${\rm div}(\fra_H)$  and 
$$a_T :={\rm ord}_T\big({\rm div}(\fra)_{\rm red}\vert_H\big)+{\rm ord}_T(\fra_H)-{\rm ord}_T\big({\rm div}(\fra)\vert_H\big)-1.$$
It is easy to see that $a_T\geq 0$, but this will also be clear from the proof of the next theorem.

\begin{theorem}\label{prop_restriction}
With the above notation, for every $p\geq 0$ and every $\lambda\in (0,1]\cap\QQ$, we have 
\begin{equation}\label{eq_prop_restriction}
\shO_H\big(-pF)\cdot I_p(\fra_H^{\lambda})\subseteq 
I_p({\mathfrak a}^{\lambda})\cdot\shO_H.
\end{equation}
Moreover, if $H$ is sufficiently general
(for example, a general member of a basepoint-free linear system), then $F=0$ and the inclusion in (\ref{eq_prop_restriction})
is an equality.
\end{theorem}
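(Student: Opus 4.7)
The plan is to reduce to the case of $\QQ$-divisors via \theoremref{prop1}(i) and then invoke the Restriction Theorem of \cite{MP4} for each general linear combination. Since the statement is local on $X$, I would first assume that $X = \Spec(R)$ is affine with $\fra = g\cdot\frb$, $\frb$ of codimension $\geq 2$, and fix generators $f_1,\ldots,f_r$ of $\fra$. Writing $D_\alpha = \text{div}(\sum_i\alpha_i f_i)$ for $\alpha\in\CC^r$, \theoremref{prop1}(i) gives a non-empty open $U\subseteq\CC^r$ with $I_p(\fra^\lambda)=\sum_{\alpha\in U}I_p(\lambda D_\alpha)$. Since $f_i\vert_H$ generate $\fra_H$, after possibly shrinking $U$ the same description applies on $H$:
$$I_p(\fra_H^\lambda)=\sum_{\alpha\in U}I_p\bigl(\lambda D_\alpha\vert_H\bigr)_H.$$

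For each $\alpha\in U$ the divisor $D_\alpha$ decomposes as $\text{div}(\fra)+E_\alpha$ with $E_\alpha$ reduced and sharing no components with $\text{div}(\fra)$. Applying \cite[Theorem~13.1]{MP4} to the $\QQ$-divisor $\lambda D_\alpha$ and the smooth hypersurface $H$ gives an inclusion
$$\shO_H(-pF_\alpha)\cdot I_p\bigl(\lambda D_\alpha\vert_H\bigr)_H \subseteq I_p(\lambda D_\alpha)\cdot\shO_H,$$
where $F_\alpha$ is the effective correction divisor on $H$ coming from that theorem. Summing the individual inclusions over $\alpha\in U$ then produces the desired inclusion, provided one establishes the pointwise bound $F_\alpha \leq F$ for all $\alpha\in U$.

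The crux is therefore to verify this comparison component-by-component. For $T$ a prime divisor on $H$ that is a component of $\text{div}(\fra_H)$, the contribution to $F_\alpha$ splits into two pieces: one measuring the non-reducedness of $\text{div}(\fra)\vert_H$ along $T$, equal to $\text{ord}_T(\text{div}(\fra)\vert_H)-\text{ord}_T(\text{div}(\fra)_{\text{red}}\vert_H)$, and a second piece coming from $E_\alpha\vert_H$, which for $\alpha$ general is bounded by $\text{ord}_T(\fra_H)-\text{ord}_T(\text{div}(\fra)\vert_H)-1$ (since the generality of $\alpha$ forces $E_\alpha$ to contribute at most the transverse multiplicity dictated by $\fra_H$, and one order is already absorbed into the "reduced" part used in \cite[Theorem~13.1]{MP4}). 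Adding these two contributions recovers precisely $a_T$, yielding $F_\alpha\leq F$.

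For the equality statement, when $H$ is a general member of a basepoint-free linear system the support of $\text{div}(\fra)$ meets $H$ transversally along its reduced components, so $\text{ord}_T(\text{div}(\fra)\vert_H)=\text{ord}_T(\text{div}(\fra)_{\text{red}}\vert_H)$, while $\text{ord}_T(\fra_H)=\text{ord}_T(\text{div}(\fra)\vert_H)+1$ for every component $T$ introduced by $E_\alpha\vert_H$; together these give $a_T=0$, hence $F=0$. In addition, the Restriction Theorem of \cite{MP4} becomes an equality in this transverse regime, so the sum over $\alpha$ yields equality. The main obstacle will be the computation in the third paragraph: pinning down the correction divisor $F_\alpha$ from the form in \cite[Theorem~13.1]{MP4} precisely enough to recover the formula for $a_T$, which requires careful tracking of which multiplicities are contributed by $\text{div}(\fra)$ versus by the auxiliary reduced divisor $E_\alpha$.
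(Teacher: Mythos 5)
Your high-level strategy matches the paper's: reduce via Theorem~\ref{prop1}(i) to general linear combinations $D_\alpha={\rm div}(\sum_i\alpha_if_i)$ and apply the divisorial Restriction Theorem \cite[Theorem~13.1]{MP4}. However, the computation you flag as the crux --- identifying the correction divisor $F_\alpha$ with $F$ --- is wrong, and the argument does not close.

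The correction divisor supplied by \cite[Theorem~13.1]{MP4} for $\lambda D_\alpha$ is $Z_H - Z'_H$, where $Z=(D_\alpha)_{\rm red}$, $Z_H=Z\vert_H$, and $Z'_H=(Z_H)_{\rm red}$. Writing $D_\alpha={\rm div}(\fra)+E_\alpha$ with $E_\alpha$ reduced and sharing no component with ${\rm div}(\fra)$, one has $Z={\rm div}(\fra)_{\rm red}+E_\alpha$, so along a component $T$ of ${\rm div}(\fra_H)$:
$$
{\rm ord}_T(Z_H-Z'_H)={\rm ord}_T\bigl({\rm div}(\fra)_{\rm red}\vert_H\bigr)+{\rm ord}_T(E_\alpha\vert_H)-1,
$$
and for $\alpha$ general ${\rm ord}_T(E_\alpha\vert_H)={\rm ord}_T(\fra_H)-{\rm ord}_T\bigl({\rm div}(\fra)\vert_H\bigr)$, which recovers $a_T$ exactly. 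Your decomposition does not reflect this: the ``first piece'' you introduce, ${\rm ord}_T\bigl({\rm div}(\fra)\vert_H\bigr)-{\rm ord}_T\bigl({\rm div}(\fra)_{\rm red}\vert_H\bigr)$, measures the non-reducedness of ${\rm div}(\fra)\vert_H$, which is not what $Z_H-Z'_H$ measures (the Restriction Theorem's correction concerns the non-reducedness of $(D_\alpha)_{\rm red}\vert_H$). Summing your two claimed pieces gives ${\rm ord}_T(\fra_H)-{\rm ord}_T\bigl({\rm div}(\fra)_{\rm red}\vert_H\bigr)-1$, which differs from $a_T$ by $2\,{\rm ord}_T\bigl({\rm div}(\fra)_{\rm red}\vert_H\bigr)-{\rm ord}_T\bigl({\rm div}(\fra)\vert_H\bigr)$; for ${\rm div}(\fra)$ reduced this is ${\rm ord}_T\bigl({\rm div}(\fra)_{\rm red}\vert_H\bigr)\ge 1\neq 0$. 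So the asserted bound $F_\alpha\le F$ is not established, and the inclusion does not follow as written. Similarly, in the equality case your claim that for $H$ general ${\rm ord}_T\bigl({\rm div}(\fra)\vert_H\bigr)={\rm ord}_T\bigl({\rm div}(\fra)_{\rm red}\vert_H\bigr)$ is false whenever ${\rm div}(\fra)$ is non-reduced; what actually holds is ${\rm ord}_T\bigl({\rm div}(\fra)_{\rm red}\vert_H\bigr)=1$ and ${\rm ord}_T(\fra_H)={\rm ord}_T\bigl({\rm div}(\fra)\vert_H\bigr)$, whence $a_T=0$. The paper avoids these pitfalls by computing ${\rm ord}_T(Z_H-Z'_H)$ directly as above, and, for the equality, by choosing finitely many $D_i$ realizing both Hodge ideals and taking $H$ general with respect to all of them.
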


\begin{proof}
We may assume that $X$ is affine and ${\mathfrak a}$ is generated by $h_1,\ldots,h_r$. If $\alpha_1,\ldots,\alpha_r\in\CC$ are general and $D$ is defined by
$\sum_i\alpha_ih_i$, then $D\vert_H$ is defined by a general linear combination of a system of generators of 
$\fra_H$. We can write
$D={\rm div}(\fra)+B$, with $B$ reduced and having no common components with ${\rm div}(\fra)$. Therefore we have
$$Z:=D_{\rm red}={\rm div}(\fra)_{\rm red}+B.$$ 
If $Z_H=Z\vert_H$ and $Z'_H=(Z_H)_{\rm red}$, it follows from 
\cite[Theorem~13.1]{MP4} that we have
\begin{equation}\label{eq2_prop_restriction}
\shO_H\big(-p(Z_H-Z'_H)\big)\cdot I_p(\lambda D\vert_H)\subseteq I_p(\lambda D)\cdot\shO_H.
\end{equation}
Moreover, if $H$ is sufficiently general (depending on $D$), then $Z_H=Z'_H$ and we have equality in (\ref{eq2_prop_restriction}). 

Note now that if $T$ is a prime divisor on $H$ such that ${\rm ord}_T(Z_H)\geq 2$, then ${\rm ord}_T(D\vert_H)\geq 2$, hence
$T$ is a component of ${\rm div}(\fra_H)$. In particular, there are only finitely many such $T$, independently of our choice of $D$.
Since $D$ is general, for every component $T$ of  ${\rm div}(\fra_H)$, we have
${\rm ord}_T(D\vert_H)={\rm ord}_T(\fra_H)$, hence
$${\rm ord}_T(Z_H-Z'_H)={\rm ord}_T\big({\rm div}(\fra)_{\rm red}\vert_H\big)+{\rm ord}_T(B\vert_H)-1$$
$$={\rm ord}_T\big({\rm div}(\fra)_{\rm red}\vert_H\big)+{\rm ord}_T(D\vert_H)-{\rm ord}_T\big({\rm div}(\fra)\vert_H\big)-1$$
$$={\rm ord}_T\big({\rm div}(\fra)_{\rm red}\vert_H\big)+{\rm ord}_T(\fra_H)-{\rm ord}_T\big({\rm div}(\fra)\vert_H\big)-1.$$
This shows that $Z_H-Z'_H=F$. By letting $D$ vary and using Theorem~\ref{prop1}i), we deduce from (\ref{eq2_prop_restriction})
the first assertion of the proposition.

Let us now choose divisors $D_1,\ldots,D_s$ as above such that
$$I_p(\fra^{\lambda})=\sum_{i=1}^s I_p(\lambda D_i)\quad\text{and}\quad I_p(\fra_H^{\lambda})=\sum_{i=1}^sI_p(\lambda D_i\vert_H).$$
If we take $H$ general with respect to all $D_i$, then we see that 
$$I_p(\lambda D_i\vert_H)=I_p(\lambda D_i)\cdot\shO_H\quad\text{for}\quad 1\leq i\leq s.$$
We thus obtain the second assertion of the proposition. 
\end{proof}

\begin{remark}\label{rmk_special_case_restriction}
With the notation in Theorem~\ref{prop_restriction}, if $\fra\cdot\shO_H$ is radical in codimension $1$, then $F=0$, and we get
$$I_p(\fra_H^{\lambda})\subseteq 
I_p({\mathfrak a}^{\lambda})\cdot\shO_H\quad\text{for every}\quad p\geq 0.$$
Indeed, if $X$ is affine and $D$ is as in the proof of the proposition, then the hypothesis implies that $D\vert_H$ is reduced.
In particular, we have $Z_H=Z'_H$, hence $F=0$. 
\end{remark}

\begin{corollary}\label{cor_pull_back}
Let $\varphi\colon W\to X$ be any morphism of smooth complex varieties. If ${\mathfrak a}$ is an ideal on $X$ such that
$\fra_W:={\mathfrak a}\cdot \shO_W$ is nonzero and radical in codimension $1$, then for every $p\geq 0$ and every $\lambda\in\QQ\cap (0,1]$ we have
$$I_p(\fra_W^{\lambda})\subseteq I_p(\fra^{\lambda})\cdot\shO_W.$$
\end{corollary}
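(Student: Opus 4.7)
The plan is to factor $\varphi$ through its graph, reducing to the smooth morphism case (already settled by Proposition~\ref{smooth_pull_back}) and a closed embedding case. Explicitly, I would write $\varphi$ as the composition $W\xrightarrow{\Gamma}Y\xrightarrow{\pr_2}X$ with $Y=W\times X$, where $\Gamma$ is the graph embedding and $\pr_2$ is the smooth projection. Setting $\fra_Y:=\fra\cdot\shO_Y$, Proposition~\ref{smooth_pull_back} yields $I_p(\fra_Y^\lambda)=I_p(\fra^\lambda)\cdot\shO_Y$, and restricting to $W$ via $\Gamma$ gives $I_p(\fra_Y^\lambda)\cdot\shO_W=I_p(\fra^\lambda)\cdot\shO_W$. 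It therefore suffices to prove $I_p(\fra_W^\lambda)\subseteq I_p(\fra_Y^\lambda)\cdot\shO_W$ for the closed embedding $\Gamma\colon W\hookrightarrow Y$, with $\fra_W=\fra_Y\cdot\shO_W$ radical in codimension~$1$.

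This question is local on $Y$. Near a point $w\in W$, I would choose a regular system of parameters $t_1,\ldots,t_c$ for $I_W\subset\shO_{Y,w}$ (with $c=\dim X$) and let $Y_i$ be the smooth subvariety cut out by $t_1,\ldots,t_i$ in a small neighborhood of $w$. Then $Y_0=Y$, $Y_c=W$, and each inclusion $Y_i\hookrightarrow Y_{i-1}$ is a smooth hypersurface embedding. The strategy is to apply Remark~\ref{rmk_special_case_restriction} at each link of this flag; for that to be legitimate, one needs $\fra_Y\cdot\shO_{Y_i}$ to be radical in codimension~$1$ near $w$ for every $i$.

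The auxiliary step, which I anticipate as the real content, is the following lemma: if $Z\hookrightarrow Y'$ is a closed embedding of smooth varieties, $z\in Z$, and $\frb$ is an ideal on $Y'$ such that $\frb\cdot\shO_Z$ is nonzero and radical in codimension~$1$ at $z$, then $\frb$ itself is radical in codimension~$1$ at $z$. To prove it, suppose $D$ is a component of ${\rm div}(\frb)$ through $z$ with ${\rm ord}_D(\frb)=m\ge 2$; since $\frb\cdot\shO_Z\ne 0$ forces $Z\not\subset D$, the dimension formula for intersections implies that $D\cap Z$ has a codimension~$1$ component $T$ in $Z$ through $z$. Writing $t$ for a local equation of $D$ near $z$, one has $\frb\subset(t)^m$ in a neighborhood of the generic point of $T$ (using smoothness of the ambient variety), hence $\frb\cdot\shO_Z\subset(t|_Z)^m$ at the generic point of $T$, giving ${\rm ord}_T(\frb\cdot\shO_Z)\ge m\ge 2$ and contradicting the hypothesis. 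Applied with $Z=W$, $Y'=Y_i$, and $\frb=\fra_Y\cdot\shO_{Y_i}$, this yields the desired intermediate radicality at every step of the flag.

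With this in hand, Remark~\ref{rmk_special_case_restriction} applies at every link of the chain, producing
\[
I_p\big((\fra_Y\cdot\shO_{Y_i})^\lambda\big)\subseteq I_p\big((\fra_Y\cdot\shO_{Y_{i-1}})^\lambda\big)\cdot\shO_{Y_i}.
\]
Iterating for $i=1,\ldots,c$ gives $I_p(\fra_W^\lambda)\subseteq I_p(\fra_Y^\lambda)\cdot\shO_W$ on a neighborhood of $w$; since $w\in W$ is arbitrary, the inclusion is global. The hard part is precisely the propagation of the radicality condition down the flag: without the auxiliary lemma, one would only obtain the weaker inclusion from Theorem~\ref{prop_restriction} twisted by intermediate correction divisors $F_i$, whose restrictions to $W$ need not vanish, so no iteration could close the argument.
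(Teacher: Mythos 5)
Your proof is correct and follows exactly the paper's route: factor $\varphi$ through its graph, reduce via Proposition~\ref{smooth_pull_back} to the case of a closed embedding, and induct down a flag of smooth hypersurfaces using Theorem~\ref{prop_restriction} in the form of Remark~\ref{rmk_special_case_restriction}. The paper dismisses the final induction as ``easy'' without giving details; your auxiliary lemma, that radicality in codimension one of $\frb\cdot\shO_Z$ forces radicality in codimension one of $\frb$ itself near any point of $Z$, is exactly the observation that lets the iteration close without accumulating correction divisors $F_i$, and your verification of it (using the local factorization $\frb = g\cdot\frb'$ on the smooth, hence locally factorial, ambient variety to get $\frb\subseteq I(D)^m$ near the point) is sound.
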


\begin{proof}
We can factor $\varphi$ as 
$$W\overset{j}\hookrightarrow W\times X\overset{p}\longrightarrow X,$$ where $p$ is the projection and $j$ is a closed embedding. Since $p$ is smooth, we have
$$I_p\big(({\mathfrak a}\cdot \shO_{W\times X})^{\lambda}\big)=I_p({\mathfrak a}^{\lambda})\cdot\shO_{W\times X}$$
by Proposition~\ref{smooth_pull_back},
hence in order to prove the corollary it is enough to treat the case when $\varphi$ is a closed embedding. In this case the statement follows by an easy induction
on the codimension of $W$, using Theorem~\ref{prop_restriction} (see also Remark~\ref{rmk_special_case_restriction}).
\end{proof}

We deduce the following analogue of the Subadditivity Theorem for multiplier ideals (cf. \cite[Theorem~9.5.20]{Lazarsfeld}) and for Hodge ideals of divisors (cf. \cite[Theorem~B]{MP2} and \cite[Theorem~15.1]{MP4}).

\begin{proposition}\label{subadditivity}
If $X$ is a smooth, complex algebraic variety, and $\fra$ and $\frb$ are nonzero
ideals on $X$ such that ${\rm div}(\fra)$ and ${\rm div}(\frb)$ have no common components,
then for every nonnegative integer $p$ and every $\lambda\in \QQ\cap (0,1]$, we have 
$$I_p\big(({\mathfrak a}\cdot {\mathfrak b})^{\lambda}\big)\subseteq I_p({\mathfrak a}^{\lambda})\cdot I_p({\mathfrak b}^{\lambda}).$$
\end{proposition}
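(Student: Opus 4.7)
The plan is to reduce to the Subadditivity Theorem for Hodge ideals of $\QQ$-divisors (\cite[Theorem~15.1]{MP4}), via the Coefficient characterization of \theoremref{prop1}(ii) on a parameter space encoding both $\fra$ and $\frb$.

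I would first reduce to the affine case, fix generators $f_1,\ldots,f_r$ of $\fra$ and $g_1,\ldots,g_s$ of $\frb$, and consider on $Y = X \times \AAA^r \times \AAA^s$ with coordinates $(y_1,\ldots,y_r,z_1,\ldots,z_s)$ the regular functions $F = \sum_i y_i f_i$ and $G = \sum_j z_j g_j$. The hypothesis that ${\rm div}(\fra)$ and ${\rm div}(\frb)$ have no common components, combined with the codimension-$\geq 2$ analysis of \hyperref[rmk_prop1]{Remark~\ref*{rmk_prop1}}, implies that ${\rm div}_Y(F)$ and ${\rm div}_Y(G)$ share no common components: the non-pullback components of $F$ and $G$ involve the $y$- and $z$-coordinates respectively. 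Divisor subadditivity on $Y$ then gives
\[
I_p\bigl(\lambda\,{\rm div}(F)+\lambda\,{\rm div}(G)\bigr) \subseteq I_p\bigl(\lambda\,{\rm div}(F)\bigr)\cdot I_p\bigl(\lambda\,{\rm div}(G)\bigr).
\]

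Next I would take coefficients with respect to $(y,z)$ on both sides. On the right, by \propositionref{smooth_pull_back} each factor is an extension of an ideal in $R[y_1,\ldots,y_r]$ or $R[z_1,\ldots,z_s]$ respectively, so the Coeff of their product lies in the product of the individual Coeffs, which by \theoremref{prop1}(ii) equals $I_p(\fra^\lambda) \cdot I_p(\frb^\lambda)$. On the left, note that ${\rm div}(F) + {\rm div}(G) = {\rm div}(FG)$; the analogue of \theoremref{prop1}(ii) for mixed Hodge ideals alluded to in \hyperref[mixed_HI]{Remark~\ref*{mixed_HI}} identifies the Coeff as the mixed Hodge ideal $I_p(\fra^\lambda \frb^\lambda)$. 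Combining yields $I_p(\fra^\lambda\frb^\lambda) \subseteq I_p(\fra^\lambda)\cdot I_p(\frb^\lambda)$.

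The concluding step is to identify $I_p((\fra\frb)^\lambda)$ with $I_p(\fra^\lambda\frb^\lambda)$. The inclusion $I_p(\fra^\lambda\frb^\lambda) \subseteq I_p((\fra\frb)^\lambda)$ is immediate from \definitionref{defi1} and \hyperref[mixed_HI]{Remark~\ref*{mixed_HI}}, since any divisor $E_1 + E_2 = {\rm div}(e_1 e_2)$ from the mixed definition is a valid choice of $E$ in \definitionref{defi1}, noting $e_1 e_2 \in \fra'\frb' = (\fra\frb)'$. The main obstacle I anticipate is the reverse inclusion: a general $h \in \fra'\frb'$ need not factor as $e_1 e_2$ with $e_1\in\fra'$ and $e_2\in\frb'$, so the families of admissible divisors differ a priori. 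I expect this to be reconciled by comparing the two Coefficient descriptions---on $Y$ for $I_p(\fra^\lambda\frb^\lambda)$ and on $X \times \AAA^{rs}$ for $I_p((\fra\frb)^\lambda)$ via the universal bilinear combination $H = \sum_{i,j} w_{ij} f_i g_j$---combined with the Restriction Theorem (\theoremref{prop_restriction}) applied along the Segre locus $w_{ij} = y_i z_j$.
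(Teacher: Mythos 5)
Your first half is sound: on $Y=X\times\AAA^r\times\AAA^s$ the divisors of $F$ and $G$ do share no components (by the analysis in \hyperref[rmk_prop1]{Remark~\ref*{rmk_prop1}}, the non-pullback parts of ${\rm div}(F)$ and ${\rm div}(G)$ involve only the $y$-, resp.\ $z$-variables, so any common component would be a pullback from $X$, which is excluded), so the divisor Subadditivity Theorem applies; and taking ${\rm Coeff}$ as you describe does yield $I_p(\fra^\lambda\frb^\lambda)\subseteq I_p(\fra^\lambda)\cdot I_p(\frb^\lambda)$. But this leaves you one inclusion short: you need $I_p\big((\fra\frb)^\lambda\big)\subseteq I_p(\fra^\lambda\frb^\lambda)$, and here your argument has a genuine gap. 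You correctly identify the obstacle (a general $h\in\fra'\frb'$ need not factor as a product $e_1e_2$ with $e_1\in\fra'$, $e_2\in\frb'$), but the proposed fix --- restricting along the Segre locus --- does not work as stated. The Segre cone $\{w_{ij}=y_iz_j\}$ in $\AAA^{rs}$ is singular for $r,s\geq 2$, so \theoremref{prop_restriction} (which concerns restriction to a \emph{smooth} hypersurface, and by iteration smooth subvarieties) and \corollaryref{cor_pull_back} are not directly applicable. Worse, even when some version of restriction applies, the Restriction Theorem produces inclusions of the form $I_p(\lambda D|_H)\subseteq I_p(\lambda D)\cdot\shO_H$: restricting a Hodge ideal gives an \emph{upper} bound for the Hodge ideal of the restricted divisor, so the natural consequence is the inclusion $I_p(\fra^\lambda\frb^\lambda)\subseteq I_p((\fra\frb)^\lambda)$ you already know, not its converse.

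The paper's proof bypasses the comparison of $I_p((\fra\frb)^\lambda)$ with the mixed ideal $I_p(\fra^\lambda\frb^\lambda)$ entirely. It reduces, via the diagonal $X\hookrightarrow X\times X$ and \propositionref{smooth_pull_back}/\corollaryref{cor_pull_back}, to the situation $X=X_1\times X_2$ with $\fra$, $\frb$ pulled back from the two factors. In that situation the hypotheses of \propositionref{behaviour_under_inclusion} (applied to $\fra\frb\subseteq\fra$ and $\fra\frb\subseteq\frb$) give $I_p\big((\fra\frb)^\lambda\big)\subseteq I_p(\fra^\lambda)\cap I_p(\frb^\lambda)$, and the box-product structure identifies the intersection with the product $I_p(\fra^\lambda)\cdot I_p(\frb^\lambda)$. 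That route replaces the problematic Segre/Restriction step with the much softer \propositionref{behaviour_under_inclusion}, which follows directly from the definition of $I_p(\fra^\lambda)$ as a sum over admissible divisors. If you want to salvage your ``Coeff'' approach, you would need an independent proof that $I_p((\fra\frb)^\lambda)\subseteq I_p(\fra^\lambda\frb^\lambda)$; it is not clear that this is true in general, and the paper avoids needing it.
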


\begin{proof}
Consider the diagonal embedding $\Delta\colon X\hookrightarrow X\times X$.
If the assertion in the proposition holds for the ideals $\widetilde{\fra}$ and  $\widetilde{\frb}$ on $X\times X$ given 
by pulling back $\fra$ and $\frb$ respectively, via the first and second projections, then
 it follows from Corollary~\ref{cor_pull_back} and Proposition~\ref{smooth_pull_back}
that if ${\mathfrak c}=\widetilde{\mathfrak a}\cdot \widetilde{\mathfrak b}$, then
$$I_p\big(({\mathfrak a}\cdot {\mathfrak b})^{\lambda}\big)=I_p\big(({\mathfrak c}\cdot \shO_X)^{\lambda}\big)
\subseteq I_p({\mathfrak c}^{\lambda})\cdot\shO_X$$
$$\subseteq 
\big(I_p(\widetilde{\mathfrak a}^{\lambda})\cdot \shO_X\big)\cdot \big(I_p(\widetilde{\mathfrak b}^{\lambda})\cdot \shO_X\big)=I_p({\mathfrak a}^{\lambda})\cdot
I_p({\mathfrak b}^{\lambda}).$$

Therefore we may assume that $X=X_1\times X_2$ and that ${\mathfrak a}={\mathfrak a}_1\cdot\shO_X$ and ${\mathfrak b}={\mathfrak a}_2\cdot\shO_X$,
where ${\mathfrak a}_i$ are ideals on $X_i$. In this case, by combining Propositions~\ref{behaviour_under_inclusion}
 and \ref{smooth_pull_back}, we see that
$$I_p\big(({\mathfrak a}\cdot {\mathfrak b})^{\lambda}\big)\subseteq I_p({\mathfrak a}^{\lambda})\cap I_p({\mathfrak b}^{\lambda})=
\big(I_p({\mathfrak a}_1^{\lambda})\cdot \shO_X\big)\cap
\big(I_p({\mathfrak a}_2^{\lambda})\cdot \shO_X\big)$$
$$=I_p({\mathfrak a}_1^{\lambda})\otimes_{\mathbf C}I_p({\mathfrak a}_2^{\lambda})=I_p({\mathfrak a}^{\lambda})\cdot I_p({\mathfrak b}^{\lambda}).$$
\end{proof} 

\begin{remark}
A similar argument shows that under the assumptions of Proposition~\ref{subadditivity}, for every $\lambda,\mu\in \QQ\cap (0,1]$ and every $p\geq 0$,
we have
$$I_p(\fra^{\lambda}\frb^{\mu})\subseteq I_p(\fra^{\lambda})\cdot I_p(\frb^{\mu}).$$
\end{remark}

\medskip

We end this section with a triviality criterion for all Hodge ideals $I_p(\fra^{\lambda})$, where $\fra$ is 
any nonzero ideal on $X$.
Given a point $x\in X$, defined by the ideal ${\mathfrak m}_x$, we denote by ${\rm ord}_x(\fra)$
the largest nonnegative integer $q$ such that $\fra\subseteq {\mathfrak m}_x^q$.

\begin{proposition}\label{triviality}
If $X$ is a smooth $n$-dimensional variety, $x\in X$ is a point in the support of the subscheme defined by the ideal $\fra \subseteq \shO_X$, and 
$\lambda\in (0,1]$, then the following are equivalent:
\begin{enumerate}
\item[i)] For all $p\geq 0$, we have $I_p(\fra^{\lambda})_x=\shO_{X,x}$.
\item[ii)] There is $p\geq n$ such that $I_p(\fra^{\lambda})_x=\shO_{X,x}$.
\item[iii)] We are in one of the following two situations: either ${\rm ord}_x(\fra)=1$, or in a suitable neighborhood of $x$,
we have $\fra=\shO_X(-mZ)$ for some smooth divisor $Z$, and $2\leq m\leq \frac{1}{\lambda}$.
\end{enumerate}
\end{proposition}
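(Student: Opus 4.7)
The implication (i)$\Rightarrow$(ii) is immediate. For (iii)$\Rightarrow$(i): when $\ord_x(\fra)=1$, a general linear combination $\sum_{i}\alpha_{i}f_{i}$ of generators of $\fra$ defines a divisor $D$ that is smooth at $x$, so $\widetilde{\alpha}_x(D)=\infty$ and \cite[Corollary~C]{MP3} gives $I_p(\lambda D)_x=\shO_{X,x}$ for every $p\ge 0$ and every $\lambda\in(0,1]\cap\QQ$; Theorem~\ref{prop1}(i) then yields $I_p(\fra^{\lambda})_x=\shO_{X,x}$. When $\fra=\shO_X(-mZ)$ locally with $Z$ smooth and $m\lambda\leq 1$, the ideal $\fra$ is principal near $x$, so by the ``Principal ideals'' remark $I_p(\fra^{\lambda})=I_p(m\lambda Z)$; the same criterion applied to the smooth reduced divisor $Z$ with coefficient $m\lambda\leq 1$ gives triviality at $x$.

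The content of the proposition is (ii)$\Rightarrow$(iii). The first step is a reduction: I claim that $I_p(\fra^{\lambda})_x=\shO_{X,x}$ forces the existence of a single general divisor $D_{\alpha}={\rm div}(\sum_{i}\alpha_{i}f_{i})$ with $I_p(\lambda D_{\alpha})_x=\shO_{X,x}$. This follows from Theorem~\ref{prop1}(i), which exhibits $I_p(\fra^{\lambda})$ as the sum of the ideals $I_p(\lambda D_{\alpha})$ for $\alpha$ general; in the local ring $\shO_{X,x}$, any sum of ideals contained in the maximal ideal ${\mathfrak m}_x$ remains contained in ${\mathfrak m}_x$, so at least one $I_p(\lambda D_{\alpha})_x$ must equal $\shO_{X,x}$.

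Writing $\fra=g\cdot\frb$ near $x$ with $\frb$ of codimension $\geq 2$, for general $\alpha$ we have $D_{\alpha}={\rm div}(\fra)+E_{\alpha}$ with $E_{\alpha}$ reduced and sharing no components with ${\rm div}(\fra)$. If ${\rm div}(\fra)=0$ at $x$, then $D_{\alpha}=E_{\alpha}$ is reduced at $x$ with ${\rm mult}_x(D_{\alpha})=\ord_x(\fra)$; combining \cite[Corollary~C]{MP3}, the standard bound $\widetilde{\alpha}_x(D_{\alpha})\leq n/{\rm mult}_x(D_{\alpha})$, and $p\geq n$ forces $\ord_x(\fra)=1$, which is the first alternative of (iii). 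If instead ${\rm div}(\fra)\neq 0$ at $x$, the triviality criterion for Hodge ideals of $\QQ$-divisors from \cite{MP4} applied to $\lambda D_{\alpha}$ forces $(D_{\alpha})_{\rm red}$ to be smooth at $x$; since ${\rm div}(\fra)_{\rm red}$ already passes through $x$, the summand $E_{\alpha}$ cannot, which forces $\frb=\shO_X$ near $x$, so $\fra$ is principal. Applying the same criterion to ${\rm div}(g)$ then forces ${\rm div}(g)=mZ$ for a single smooth $Z$ through $x$ with $m\lambda\leq 1$; combined with $m\geq 2$ (otherwise $\ord_x(\fra)=1$ and we are in the first case), this gives the second alternative of (iii). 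The main obstacle is the clean deployment of the $\QQ$-divisor triviality criterion from \cite{MP4} in the case ${\rm div}(\fra)\neq 0$ at $x$, where both the smoothness of $(D_{\alpha})_{\rm red}$ and the coefficient bound $m\leq 1/\lambda$ must be extracted simultaneously.
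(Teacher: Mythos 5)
Your proof is correct and follows essentially the same strategy as the paper's: use Theorem~\ref{prop1}(i) to reduce to a single generic $D=\div(\sum_i\alpha_i f_i)$ with $I_p(\lambda D)_x=\shO_{X,x}$, then apply the $\QQ$-divisor triviality criterion to force $D_{\rm red}$ smooth at $x$, and finally read off the two alternatives of (iii). The differences are minor but worth noting. The paper invokes \cite[Corollary~10.7]{MP4} once to conclude that $D_{\rm red}$ is smooth at $x$ for $p\ge n$, and then splits on $m=1$ versus $m\ge 2$ (where $D=mZ$ near $x$), getting the two alternatives directly; in particular in the $m=1$ case it needs no further lemma since $D$ being smooth at $x$ immediately gives $\ord_x(\fra)=1$. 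You instead split up front on whether $\div(\fra)$ passes through $x$, and in the reduced branch you reach the same conclusion via \cite[Corollary~C]{MP3} combined with the bound $\widetilde{\alpha}_x(D)\le n/\mathrm{mult}_x(D)$; that is logically valid (and the bound is standard, cf.\ \cite[Theorem~E(3)]{MP3}), but it uses more machinery than needed. On the issue you flag at the end: there is no obstacle to extracting the coefficient bound. Once $D=mZ$ near $x$ with $Z$ smooth, the explicit formula $I_p(\lambda mZ)=\shO_X\bigl((1-\lceil\lambda m\rceil)Z\bigr)$ from \cite[\S 3,4]{MP4} gives triviality at $x$ if and only if $\lceil\lambda m\rceil\le 1$, i.e.\ $\lambda m\le 1$; this is entirely separate from the smoothness statement of \cite[Corollary~10.7]{MP4}, so nothing needs to be extracted "simultaneously." Citing that formula would close the small gap you yourself identify.
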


\begin{proof}
We may assume that $X$ is affine,  and we let $D$ be the divisor defined by a general linear combination of some generators of $\fra$.
Given $p\geq 0$, it follows from Theorem~\ref{prop1} that $I_p(\fra^{\lambda})_x=\shO_{X,x}$ if and only if there is such a $D$ with
$I_p(\lambda D)_x=\shO_{X,x}$. In fact, in this case the same equality holds for all general $D$; this is a consequence of the Semicontinuity Theorem
for Hodge ideals (see \cite[Theorem~14.1]{MP4}). 

On the other hand, if $I_p(\lambda D)_x=\shO_{X,x}$ for some $p\geq n$, then $D_{\rm red}$ is smooth 
at $x$ (see \cite[Corollary~10.7]{MP4}). If this is the case, after replacing $X$ by a suitable neighborhood of $x$, we may assume that 
$Z=D_{\rm red}$ is smooth and $D=mZ$. If $m=1$, then we clearly have ${\rm ord}_x(\fra)=1$. On the other hand, if $m\geq 2$, then 
$D$ being general implies that $D={\rm div}(\fra)$, hence $\fra=\shO_X(-mZ)$. The inequality 
$\lambda m\leq 1$ follows from the fact that, since $Z$ is smooth, we have
$$I_p(\lambda mZ)=\shO_X\big((1-\lceil \lambda m\rceil)Z)$$
(see \cite[\S 3,4]{MP4}). This proves the implication ii)$\Rightarrow$iii). 

The implication iii)$\Rightarrow$i) follows immediately from the fact that, as we have already seen,
for a smooth divisor  $Z$ we have $I_p(\lambda Z)=\shO_X$ for all $p\geq 0$ and $\lambda\in (0,1]$. Since the implication
i)$\Rightarrow$ii) is trivial, this completes the proof of the proposition.
\end{proof}

As mentioned in the Introduction, further tools from the study of Hodge ideals of divisors are still missing, mainly due to the lack of a direct connection with Hodge theory. For example, at least at the moment, there is no $\Dmod_X$-module (of Hodge theoretic origin) associated naturally to the ideals $I_p (\fra)$.
A natural question is the following:

\begin{question}\label{vanishing}
Is there a vanishing theorem for Hodge ideals associated to ideals? More precisely, assuming that $X$ is a smooth projective variety, 
$\fra$ is a nonzero ideal on $X$ and $A$ is a line bundle on $X$, what are the conditions 
$\fra$, $A$ and $p$ must satisfy in order to have 
$$H^i \big(X, \omega_X \otimes A \otimes I_p (\fra)\big) = 0, \,\,\,\,\,\, {\rm for~all} \,\,\,\,i > 0.$$
\end{question}

Here one is looking for a statement in analogy with the vanishing theorem for Hodge ideals of divisors, see \cite[Theorem~F]{MP1} and 
\cite[Theorem~12.1]{MP3}, and with that for multiplier ideals associated to ideals, see \cite[Corollary 9.4.15]{Lazarsfeld}.

\section{Examples}

In this section we provide a few concrete calculations of Hodge ideals associated to ideals; note that even in 
the case of powers of the maximal ideal this is quite involved. We also give some examples of pathological 
behavior of higher Hodge ideals, compared to the case of multiplier ideals.

First, in light of the Proposition \ref{rem1}, we see that if $X$ is affine and $h$  is a general linear combination of a system of generators of ${\mathfrak a}$, then
$$I_0(h^{\lambda})=I_0({\mathfrak a}^{\lambda})\quad \text{for all}\quad\lambda\in (0,1]\cap\QQ.$$
We give two examples showing that the corresponding assertion can fail for $p>0$, even when $\lambda=1$.

\begin{example}
Let ${\mathfrak a}=(xy,xz)\subseteq \CC[x,y,z]$. Note that for every $(a,b)\in\CC^2\smallsetminus\{(0,0)\}$, 
the divisor $D_{a,b}$ in $\AAA^3$ defined by $axy+bxz$ is reduced, with simple normal crossings, and so
by \cite[Proposition~8.2]{MP1} we have
$$I_1(D_{a,b})=(x,ay+bz).$$
We thus see that $I_1({\mathfrak a})=(x,y,z)$, but $I_1(D_{a,b})\neq (x,y,z)$ for any $(a,b)\neq (0,0)$.
\end{example}

\begin{example}
Let ${\mathfrak a}=(x^2,y^3)\subseteq \CC[x,y]$. If $D_{a,b}$ is the divisor in $\AAA^2$ defined by $h=ax^2+by^3$, with $a,b\neq 0$, an easy computation 
based on
\cite[Corollary~17.8]{MP1} gives
$$I_2(D_{a,b})=(x^3,x^2y^2,xy^3,3ax^2y-by^4).$$
We deduce from Theorem \ref{prop1}i) that
$$I_2({\mathfrak a})=(x^3,x^2y,xy^3, y^4)\neq I_2(D_{a,b})\quad \text{for all}\quad a,b\neq 0.$$
\end{example}

We now give an example in which we can compute the Hodge ideal of an ideal, while we do not have a closed formula for the corresponding Hodge ideal of a general member of the ideal.

\begin{example}\label{powers_maximal}
We compute the Hodge ideals associate to powers of maximal ideals. Let ${\mathfrak m}_x$ be the ideal defining the point $x$ on a smooth variety $X$ of dimension $n\geq 2$. We will show that if $N\geq 1$ and $\mu(N,p,n)=(p+1)(N-1)-n+\lceil n/N\rceil$, then
\begin{equation}\label{eq_formula_power}
I_p({\mathfrak m}_x^N) = \left\{
\begin{array}{cl}
\shO_X, & \text{if}\quad p+1\leq\frac{n}{N}; \\[2mm]
{\mathfrak m}_x^{\mu(N,p,n)}, &  \text{if} \quad p+1>\frac{n}{N}.
\end{array}\right.
\end{equation}
Note that if $p+1\geq \frac{n}{N}$, then $\mu(N,p,n)\geq 0$. 

For $N=1$, the above formula says that $I_p({\mathfrak m}_x)=\shO_X$ for all $p$, which is clear (see Proposition~\ref{triviality}). From now on we assume $N\geq 2$.
By taking an \'{e}tale map $U\to\AAA^n$ that maps $x$ to $0$, where $U$ is an open neighborhood of $x$, using 
Proposition~\ref{smooth_pull_back} we may assume that 
$X=\AAA^n$ and ${\mathfrak m}_x=(x_1,\ldots,x_n)$. In this case, since ${\mathfrak m}_x^N$ is preserved by all linear changes of variables, every $I_p({\mathfrak m}_x^N)$ has the same property, hence it is a power of 
${\mathfrak m}_x$. It follows that given a system of homogeneous generators
of $I_p({\mathfrak m}_x^N)$, we only need to determine the minimal degree of these generators.

Let $D$ be the divisor in $\AAA^n$ defined by a general linear combination $f$ of the monomials of degree $N$. In particular $f$ is a homogeneous polynomial, with an isolated singularity at $0$. Note that $I_p(D)$ is a homogeneous ideal, but might not be monomial.
We need to show that if $\nu(N,p,n)$ is the minimal degree of a homogeneous element of $I_p(D)$, then
$\nu(N,p,n)=\mu(N,p,n)$ if $p+1>\frac{n}{N}$ and $\nu(N,p,n)=0$, otherwise.

The key ingredient is an inductive formula for computing the Hodge ideals of such a polynomial $f$;  according to \cite[Corollary~B]{Zhang}, inspired in turn by a result in \cite{Saito-HF}, for 
every $p \ge 1$ we have 
\begin{equation}\label{homogeneous_formula}
I_p (D)  = \sum_{{\rm deg} (v_j) \ge (p +1)N - n} \shO_X \cdot v_j + \sum_{1\le i \le n, ~g \in I_{p-1}(D)} \shO_X \cdot 
(f \partial_ig - p g \partial_if),
\end{equation}
where the first sum is taken over those $v_j$ in a basis of monomials for the Milnor algebra 
$$S=\CC [X_1, \ldots, X_n] / (\partial_1f, \ldots, \partial_nf),$$
whose degree is at least $(p +1)N - n$.

We prove the formula for $\nu(N,p,n)$ by induction on $p$, the case $p=0$ being clear, 
by Proposition~\ref{rem1}
and the well-known formula for $\I({\mathfrak m}_x^{\lambda})$ (see \cite[Example~9.2.14]{Lazarsfeld}): 
$$I_0(D)=\I\big((1-\epsilon)D\big)=\I({\mathfrak m}_x^{N(1-\epsilon)})={\mathfrak m}_x^{N-n},\quad\text{where}\quad 0<\epsilon\ll 1,$$
with the convention that the last term is $\shO_X$ when $N<n$.

 If $p+1\leq\frac{n}{N}$, then
$I_p(D)=\shO_X$ by (\ref{homogeneous_formula}) since $1$ is part of a monomial basis of the Milnor algebra (recall that we assume $N\geq 2$) 
of degree $0\geq (p+1)N-n$. 
Suppose now that $p$ is positive, with $p+1>\frac{n}{N}$. Note that if $g$ is a homogeneous polynomial of degree $q$ in $I_{p-1}(D)$, then by (\ref{homogeneous_formula})
 all $f \partial_ig - p g \partial_if$ lie in $I_p(D)$; if nonzero, these are homogeneous of degree $N+q-1$. If $q=\nu(N,p-1,n)$, then not all
these can be $0$: otherwise we have $\partial_i(g/f^p)=0$ for all $i$, hence $g/f^p$ is a constant, and thus $q=pN$; however, using the formula
for $\nu(N,p-1,n)$ given by the induction hypothesis, we see that $\nu(N,p-1,n)<pN$. 

We also note that we get a contribution to $I_p(D)$ from the first sum in (\ref{homogeneous_formula}) if and only if $(p+1)N-n\leq n(N-2)$,
and in this case the contribution consists of monomials of degree $\geq (p+1)N-n$,
with equality for some monomials.
 Indeed, since $\partial_1f,\ldots,\partial_nf$ form a regular sequence
of homogeneous forms of degree $N-1$, the Hilbert series of $S$ is given by 
$$\frac{(1-t^{N-1})^n}{(1-t)^n}=(1+t+\cdots+t^{N-2})^n,$$
hence for a nonnegative integer $d$ we have $S_d\neq 0$ if and only if $d\leq n(N-2)$.
By combining these observations, we conclude from (\ref{homogeneous_formula}) that 
\begin{equation}\label{formula_nu}
\nu(N,p,n)= \left\{
\begin{array}{cl}
\min\{\nu(N,p-1,n)+N-1, (p+1)N-n\}, & \text{if}\quad (p+1)N\leq n(N-1); \\[2mm]
\nu(N,p-1,n)+N-1\, &  \text{if} \quad (p+1)N>n(N-1).
\end{array}\right.
\end{equation}

We distinguish two cases. If $p>\frac{n}{N}$, then we see using the induction hypothesis and an easy computation that
$$\nu(N,p-1,n)+N-1=\mu(N,p-1,n)+N-1\leq (p+1)N-n,$$
hence we deduce using (\ref{formula_nu}) that $\nu(N,p,n)=\mu(N,p-1,n)+N-1=\mu(N,p,n)$.

Suppose now that $p\leq\frac{n}{N}$, hence by the induction hypothesis we have $\nu(N,p-1,n)=0$. We further distinguish two possibilities.
If $pN\in\{n-1,n\}$, then we again have $N-1\leq (p+1)N-n$, hence $\nu(N,p,n)=N-1$ by (\ref{formula_nu}). Moreover, in this case it is easy to see
that $\mu(p,N,n)=N-1$, hence we are done.

On the other hand, if $pN\leq n-2$, then $(p+1)N\leq n(N-1)$ (we use the fact that $N\geq 2$) and 
$(p+1)N-n\leq N-1$, so that it follows from (\ref{formula_nu}) that $\nu(N,p,n)=(p+1)N-n$.
Note also that in this case we have $\lceil n/N\rceil=p+1$, hence
$\mu(N,p,n)=(p+1)N-n$. This completes the proof of (\ref{eq_formula_power}).
\end{example}

\begin{example}\label{example_diagonal}
Let $\fra=(x_1^N,\ldots,x_n^N)\subseteq \CC[x_1,\ldots,x_n]$, with $n,N\geq 2$.
We show that if ${\mathfrak m}=(x_1,\ldots,x_n)$, then
\begin{equation}\label{formula_mu}
I_1(\fra)= \left\{
\begin{array}{cl}
\CC[x_1,\ldots,x_n], & \text{if}\quad N\leq \frac{n}{2}; \\[2mm]
(x_1^{N-1},\ldots,x_n^{N-1})+{\mathfrak m}^{2N-n}, & \text{if}\quad \frac{n}{2}\leq N\leq n; \\[2mm]
(x_1^{N-1},\ldots,X_n^{N-1})\cdot{\mathfrak m}^{N-n}+{\mathfrak m}^{2N-n}, &  \text{if} \quad N\geq n.
\end{array}\right.
\end{equation}
Suppose that $N>n$. Let $D$ be the divisor defined by a general linear combination
$$f=\sum_{i=1}^n\alpha_ix_i^N.$$
Again, $f$ is homogeneous of degree $N$, having an isolated singularity at $0$, hence we can use
the formula (\ref{homogeneous_formula}). 

In this case the Milnor algebra is given by
$$S=\CC[x_1,\ldots,x_n]/(x_1^{N-1},\ldots,x_n^{N-1}),$$
hence the contribution of the first sum in (\ref{homogeneous_formula}) to $I_1(D)$ consists of
$$(x_1^{a_1}\cdots x_n^{a_n}\mid a_i\leq N-2\,\,\text{for all}\,\,i,~ a_1+\cdots+a_n\geq 2N-n).$$

Note that since $\fra$ is a monomial ideal, it is preserved by the standard action of $(\CC^*)^n$ on $\AAA^n$,
hence the same holds for $I_1(\fra)$. Therefore $I_1(\fra)$ is a monomial ideal as well. It follows that $I_1(\fra)$
is generated by the monomials that appear with nonzero coefficient in the polynomials in $I_1(D)$, for $D$ as above. 

Since ${\mathfrak m}^N$ is the integral closure of $\fra$ and since multiplier ideals do not change after replacing an ideal by its integral closure (see \cite[Corollary~9.6.17]{Lazarsfeld}), we see as in Example~\ref{powers_maximal} that
$I_0(D)={\mathfrak m}^{N-n}$. Thus the contribution of the second sum in (\ref{homogeneous_formula})
to $I_1(D)$ consists of the ideal generated by
$f\partial_ig-Nx_i^{N-1}g$, where $g$ varies over the monomials in ${\mathfrak m}^{N-n}$ and $1\leq i\leq n$. Since the coefficients of $f$ 
are general, it is clear that the monomials that appear in $f\partial_ig-Nx_i^{N-1}g$ are $x_i^{N-1}g$ and $x_j^N\partial_ig$, with $1\leq j\leq n$.
The ideal generated by these monomials is
$(x_1^{N-1},\ldots,x_n^{N-1})\cdot {\mathfrak m}^{N-n}$.

By combining the two contributions, we conclude that 
$$I_1(\fra)=(x_1^{N-1},\ldots,x_n^{N-1})\cdot {\mathfrak m}^{N-n}+{\mathfrak m}^{2N-n},$$
which proves our formula for $N>n$. The proofs in the other two cases are similar, but easier.
\end{example}

\begin{example}[Non-invariance under integral closure]
Recall that if $\fra$ and $\frb$ are two nonzero ideals on $X$, with the same integral closure, then
$$\I(\fra^{\lambda})=\I(\frb^{\lambda})\quad\text{for all}\quad\lambda>0$$
(see \cite[Corollary~9.6.17]{Lazarsfeld}). 
This property fails for Hodge ideals: consider, for example, $\fra=(x^N,y^N,z^N)$ and $\frb=(x,y,z)^N$ in $\CC[x,y,z]$, for $N\geq 3$.
Note that $\frb$ is the integral closure of $\fra$, while it follows from Examples~\ref{powers_maximal} and \ref{example_diagonal}
that $I_1(\fra)$ is strictly contained in $I_1(\frb)$. 
\end{example}

\begin{example}[Failure of the asymptotic property]\label{no_asympt}
For multiplier ideals, it follows immediately from their definition that 
$$\I \big((\fra^\ell)^\frac{\lambda}{\ell} \big) = \I \big((\fra^{k\ell})^\frac{\lambda}{k\ell}\big)$$
for all integers $k, \ell > 0$. The inclusion ``$\subseteq$" is crucial 
 for the construction of \emph{asymptotic} multiplier ideals; 
 see \cite[\S11.1]{Lazarsfeld}. This inclusion might not hold for higher Hodge ideals. Consider for instance the maximal ideal $\fra = (x_1, \ldots, x_n) \subset \CC[x_1, \ldots, x_n]$, with $n \ge 3$, and fix an integer $m > 0$. Let $D$ be the zero locus of a general linear combination of monomials of degree 
$m$ in the $x_i$, so that $D$ has an isolated ordinary singularity of multiplicity $m$ at the origin.
An easy application of \cite[Corollary~B]{Zhang} (see also 
\cite[Example~11.7]{MP4}) gives 
$I_1 \big(\frac{1}{m} D\big) = \frak{m}_x^{m + 1 - n}$ for $m\geq n-1$; in this case Theorem \ref{prop1}i) implies
$I_1 \big(\frac{1}{m} \fra^m\big) = \frak{m}_x^{m + 1 - n}$. This means that for $\ell > n -1$ and $k > 1$ 
we have in fact the strict inclusion
$$I_1 \big( (\fra^{k\ell})^\frac{1}{k\ell} \big) \subsetneq I_1 \big( (\fra^{\ell})^\frac{1}{\ell} \big).$$
It is an interesting question if, or when, some type of asymptotic construction can be performed in this context.
\end{example}

\begin{example}
For effective divisors $D$ and $E$ on a smooth variety $X$, with $D+E$ reduced, it is shown in \cite[Theorem~B]{MP2} that we have
$$I_p(D+E)\subseteq\sum_{i+j=p}I_i(D)\cdot I_j(E)\cdot\shO_X(-jD-iE)\quad\text{for all}\quad p\geq 0.$$
We used this for instance to deduce the inclusion in Proposition~\ref{subadditivity} in the case of locally principal ideals and $\lambda=1$.
One could ask whether for arbitrary nonzero ideals ${\mathfrak a}$ and ${\mathfrak b}$ such that 
${\rm div}(\fra)$ and ${\rm div}(\fra)$ have no common components,
we have
\begin{equation}\label{eq1_rem_subadditivity}
I_p({\mathfrak a}\cdot {\mathfrak b})\subseteq\sum_{i+j=p}I_i({\mathfrak a})\cdot I_j({\mathfrak b})\cdot {\mathfrak a}^{j}\cdot{\mathfrak b}^i\quad\text{for all}\quad p\geq 0.
\end{equation}
It is easy to deduce that this still holds if either ${\mathfrak a}$ or ${\mathfrak b}$ is locally principal. 
However, it does not hold in general. Suppose, for example, that $X=\AAA^{2n}$ with coordinates $x_1,\ldots,x_n,y_1,\ldots,y_n$,  while
${\mathfrak a}=(x_1,\ldots,x_n)$ and ${\mathfrak b}=(y_1,\ldots,y_n)$. Note that  $I_i({\mathfrak a})=I_i({\mathfrak b})=\shO_X$ (see Proposition~\ref{triviality}),
hence (\ref{eq1_rem_subadditivity}) says in this case that
$$I_p({\mathfrak a}\cdot {\mathfrak b})\subseteq\sum_{i+j=p}(x_1,\ldots,x_n)^j\cdot (y_1,\ldots,y_n)^i=(x_1,\ldots,x_n,y_1,\ldots,y_n)^p.$$
However, it follows from \cite[Corollary~D]{MP2} that if $f=\sum_{i=1}^nx_iy_i$, then 
$I_k(f)=\shO_X$ for $p\leq n-1$. Therefore (\ref{eq1_rem_subadditivity}) fails for $n\geq 2$. 
\end{example}

\section{Generic minimal exponent}

In this section we define and study an extension of the concept of minimal exponent of a hypersurface \cite{Saito-B}, \cite{Saito-MLCT} (see also \cite{MP3}, \cite{MP5} for a recent study and applications) to the case of arbitrary subschemes. As always, we work on a smooth variety $X$ of dimension $n$.

Recall first that an important invariant of the singularities of a nonzero $f\in\shO_X(X)$ is the \emph{Bernstein-Sato polynomial}
$b_f(s)\in {\mathbf C}[s]$ of $f$. The roots of $b_f$ are negative rational numbers by a theorem of Kashiwara 
\cite{Kashiwara}. From now on we assume that $f$ is not invertible, in which case $b_f(-1)=0$. The negative of the greatest root of $b_f(s)/(s+1)$ is the \emph{minimal exponent} $\widetilde{\alpha}(f)$ of $f$ (with the convention that if $b_f(s)=s+1$, which is the case if and only if $f$ defines a smooth hypersurface, then $\widetilde{\alpha}(f)=\infty$). By a result of Lichtin and Koll\'{a}r (see \cite[Theorem~10.6]{Kollar}), the negative of the greatest root of $b_f(s)$ is the \emph{log canonical threshold} ${\rm lct}(f)$, hence ${\rm lct}(f)=\min\{1,\widetilde{\alpha}(f)\}$. For an introduction to the log canonical threshold and its relation to multiplier ideals,
we refer to \cite[Chapter~9]{Lazarsfeld}. 

We will be mostly using a local version of the minimal exponent: given $x$ in the zero-locus of $f$, if $U$ is an open neighborhood of $x$,
then $\widetilde{\alpha}(f\vert_U)\geq \widetilde{\alpha}(f)$. Moreover, if $U$ is small enough, then $\widetilde{\alpha}(f\vert_U)$ is independent of $U$;
the common value is the \emph{minimal exponent} $\widetilde{\alpha}_x(f)$ of $f$ at $x$. 

\begin{remark}
The global and local minimal exponents of $f$ were denoted in \cite{MP3} by $\widetilde{\alpha}_{f}$ and $\widetilde{\alpha}_{f,x}$, respectively,
in line with the notation from \cite{Saito-B}, \cite{Saito-MLCT}. 
However, for what follows below we found the present notation more convenient. 
\end{remark}

The minimal exponent is related to Hodge ideals as follows: if $f$ defines a divisor $D$ which is reduced in a neighborhood of $x$, then
\begin{equation}\label{char_min_exp}
I_p (\lambda D)_x = \shO_{X,x} \iff p + \lambda \le \widetilde{\alpha}_x(f)
\end{equation}
(see \cite[Corollary~C]{MP3}). Note that from the point of view of the minimal exponent, the interesting case is that when $D$ is reduced in some
neighborhood of $x$; otherwise ${\rm lct}_x(f)<1$ and $\widetilde{\alpha}_x(f)={\rm lct}_x(f)$.

We will make use of the following semicontinuity property of minimal exponents
for hypersurfaces. Suppose that we have a smooth morphism of complex algebraic varieties $\pi\colon W\to T$, with a section $s\colon T\to W$. 
Given $f\in\shO_W(W)$ such that the restriction $f_t$ to the fiber $\pi^{-1}(t)$ is nonzero for every $t\in T$, the
function 
$$T\ni t\to \widetilde{\alpha}_{s(t)}(f_t)\in {\mathbf R}_{>0}\cup\{\infty\}$$
is lower semicontinuous (see \cite[Theorem E(2)]{MP3}). In fact, the proof in \emph{loc. cit.} shows something stronger:
for every $\alpha>0$, the set $\{t\in T\mid \widetilde{\alpha}_{s(t)}(f_t)\geq\alpha\}$ is open in $T$.
Since a countable intersection of nonempty open subsets of $T$ is nonempty, it follows that 
the set $\{\widetilde{\alpha}_{s(t)}(f_t)\mid t\in T\}$ has a maximum, which is achieved on an open subset of $T$. Arguing by Noetherian induction,
we deduce that this set is in fact finite.

We now turn to the case of ideals. Consider a nonzero
ideal $\fra \subseteq \shO_X$ and a point $x$ in the zero-locus of $\fra$; since we are interested in a local study around $x$, we assume that $X$ is affine, and $\fra$ is generated by $f_1, \ldots, f_r$ in $\shO_X(X)$.  

\begin{definition}
The  \emph{generic minimal exponent} of $\fra$ at $x$ is defined as 
$$\overline{\alpha}_x(\fra): = \widetilde{\alpha}_x(f),$$
where $f = \sum_{i =1}^r \lambda_i f_i$ is a general linear combination of the generators of $\fra$. 
\end{definition}

\begin{remark}
The fact that for a general combination $f$ as above the value of $\widetilde{\alpha}_x(f)$
is constant follows from the above discussion about the semicontinuity of the minimal exponent. Furthermore,
it is straightforward to see that this value is independent of the choice of generators of $\fra$. 
\end{remark}

\begin{remark}\label{local-coh-comment}
A priori it would make sense to simply call $\overline{\alpha}_x(\fra)$ the \emph{minimal exponent} of $\fra$ and denote it by $\widetilde{\alpha}_x(\fra)$,
extending the terminology and notation from the case of hypersurfaces. However, we prefer to keep these for a different invariant, defined in terms of
the Bernstein-Sato polynomial $b_{\fra,x}(s)$ in the sense of \cite{BMS}. If $\fra$ defines a closed subscheme $Z$ of codimension $r$ at $x$,
reduced in some neighborhood of $x$, then one can deduce from \cite[Theorem~2]{BMS} that $b_{\fra,x}(-r)=0$; we define the \emph{minimal exponent} 
$\widetilde{\alpha}_x(\fra)$ as the negative of the largest root of $b_{\fra,x}(s)/(s+r)$. This is in general different from $\overline{\alpha}_x(\fra)$, and seems to be related more naturally to the Hodge filtration on local cohomology. We hope to study this relationship in future work. 
 \end{remark}

\begin{proposition}\label{min_exp_equiv}
If $\fra$ is not radical in codimension $1$ around $x$, then
$\overline{\alpha}_{x}(\fra)$ is equal to the log canonical threshold ${\rm lct}_x (\fra)$
of $\fra$ at $x$. On the other hand, if 
$\fra$ is radical in codimension $1$ around $x$, then
\begin{equation}\label{eq2_min_exp_equiv}
I_p (\fra^\lambda)_x = \shO_{X,x} \iff p + \lambda \le \overline{\alpha}_x(\fra).
\end{equation}
\end{proposition}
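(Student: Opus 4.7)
The plan is to reduce both assertions to the known divisor case by evaluating everything on a general linear combination of generators. Fix generators $f_1,\ldots,f_r$ of $\fra$ near $x$ and let $D$ be the divisor cut out by a general $f=\sum_i\alpha_i f_i$, so that $\overline{\alpha}_x(\fra)=\widetilde{\alpha}_x(f)$ by definition. Writing $\fra=g\cdot\frb$ with $\frb$ defining a subscheme of codimension $\geq 2$, we have $f=g\cdot h$ with $h=\sum_i\alpha_i h_i$ a general element of $\frb$, and the argument of Remark~\ref{rmk_prop1} (applied locally at $x$) shows that ${\rm div}(h)$ is reduced and shares no components with ${\rm div}(g)={\rm div}(\fra)$.

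For the first assertion, if $\fra$ is not radical in codimension $1$ around $x$, then $g$ itself is not reduced there, so ${\rm div}(f)={\rm div}(g)+{\rm div}(h)$ carries a component of multiplicity $\geq 2$. Consequently ${\rm lct}_x(f)<1$, and hence $\widetilde{\alpha}_x(f)={\rm lct}_x(f)$ by the Lichtin--Koll\'{a}r theorem recalled in the introduction to this section. On the other hand, the result used in the proof of Proposition~\ref{rem1} (namely \cite[Proposition~9.2.28]{Lazarsfeld}) gives $\I(\fra^c)_x=\I(f^c)_x$ for all $c\in(0,1)\cap\QQ$, so in particular ${\rm lct}_x(\fra)={\rm lct}_x(f)$. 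Combining these yields $\overline{\alpha}_x(\fra)={\rm lct}_x(\fra)$.

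For the second assertion, if $\fra$ is radical in codimension $1$ around $x$ then $g$ is reduced there, so $D={\rm div}(f)$ is reduced in some neighborhood of $x$. The equivalence (\ref{char_min_exp}) from \cite[Corollary~C]{MP3} applied to such a $D$ gives
$$I_p(\lambda D)_x=\shO_{X,x}\iff p+\lambda\leq\widetilde{\alpha}_x(f)=\overline{\alpha}_x(\fra).$$
Now Theorem~\ref{prop1}i) expresses $I_p(\fra^\lambda)$ as a finite sum of ideals of the form $I_p(\lambda D_i)$, where each $D_i$ is cut out by a general linear combination of generators. Since each local ideal $I_p(\lambda D_i)_x$ is either all of $\shO_{X,x}$ or is contained in the maximal ideal $\mathfrak{m}_x$, the local sum equals $\shO_{X,x}$ if and only if at least one summand does; by the semicontinuity argument already used in the proof of Proposition~\ref{triviality}, this in turn is equivalent to $I_p(\lambda D)_x=\shO_{X,x}$ for the generic $D$. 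Chaining these equivalences yields (\ref{eq2_min_exp_equiv}).

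The main subtlety will be the $\Rightarrow$ direction of the second assertion: one must translate the local triviality of the finite sum $I_p(\fra^\lambda)_x$ into the triviality of $I_p(\lambda D)_x$ for a single generic $D$, and for that both the pointwise triviality-versus-maximal-ideal dichotomy and the semicontinuity of Hodge ideals in families are needed. Once this conversion is in place, everything else reduces to straightforward bookkeeping using the characterization (\ref{char_min_exp}) for reduced divisors.
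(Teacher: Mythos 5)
Your proof is correct and follows essentially the same route as the paper's: reduce to a general divisor $D$ via Theorem~\ref{prop1}i), handle the non-reduced case through ${\rm lct}$ via \cite[Proposition~9.2.28]{Lazarsfeld}, and in the reduced case combine the local-ring dichotomy with (\ref{char_min_exp}). The only minor difference is that you explicitly flag the semicontinuity step (needed to pass from \emph{some} trivial summand $I_p(\lambda D_0)_x$ to the \emph{generic} $D$), whereas the paper leaves this implicit, relying on the earlier discussion that the generic minimal exponent is the maximum of $\widetilde{\alpha}_x$ over the family of linear combinations; both versions of the argument are fine.
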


\begin{proof}
If $\fra$ is not radical in codimension $1$ around $x$ and $f$ is a general linear combination of generators of $\fra$, 
 then $f$  defines a divisor having a non-reduced component containing $x$. 
We therefore have ${\rm lct}_x(f) <1$, and thus 
$${\rm lct}_x (\fra) = {\rm lct}_x(f) = \widetilde{\alpha}_x(f),$$
where the first equality follows from \cite[Proposition~9.2.28]{Lazarsfeld} and the description of the log canonical threshold 
via multiplier ideals.  

Suppose now that $\fra$ is reduced in codimension $1$ around $x$.
If $\lambda>0$ is a rational number and 
$f$ is a general linear combination of generators of $\fra$,
defining a divisor $D$ which is reduced in some neighborhood of $x$,
then $\overline{\alpha}_x(\fra)=\widetilde{\alpha}_x(f)$. Moreover, we have 
$I_p (\fra^\lambda)_x = \shO_{X,x}$ if and only if $I_p(\lambda D)_x=\shO_{X,x}$
(for the ``only if" part, we use that $\shO_{X,x}$ is a local ring). The equivalence in 
(\ref{eq2_min_exp_equiv}) then follows from (\ref{char_min_exp}).
\end{proof}

\begin{remark}\label{rmk_min_exp_equiv}
If $p=0$, then the equivalence in (\ref{eq2_min_exp_equiv}) also holds when $\fra$ is not radical in codimension 1 around $x$.
Indeed, this follows from the description of $I_0(\fra^{\lambda})$ as a multiplier ideal in Proposition~\ref{rem1}
and the characterization of ${\rm lct}_x(\fra)$ via multiplier ideals.
\end{remark}

\begin{example}\label{eg1}
We collect a first few examples here. The case of general monomial ideals is discussed in Example \ref{monomial} below.

\noindent
(1) We have $\overline{\alpha}_x(\fra) = \infty$ if and only if ${\rm ord}_x (\fra) =1$, meaning $\fra 
\not\subseteq \frak{m}_x^2$.

\noindent
(2) If $N\ge 2$, then $\overline{\alpha}_x({\mathfrak m}_x^N) = \frac{n}{N}$, since the same is true for a 
hypersurface having multiplicity $N$ at $x$ and whose projectivized tangent cone at $x$  is smooth; 
see \cite[(4.1.5)]{Saito-HF} (cf. also \cite[Theorem~E(3)]{MP3}).

\noindent
(3) In general, if ${\rm ord}_x (\fra) =N \ge 2$, then $\overline{\alpha}_x(\fra) \le \frac{n}{N}$. This follows using 
\cite[Theorem~E(3)]{MP3}.
\end{example}

As in the case of hypersurfaces, we have:

\begin{proposition}\label{std_ineq}
For every ideal $\fra$, we have 
$$\overline{\alpha}_x(\fra) \ge {\rm lct}_x (\fra).$$
Moreover, this is an equality if ${\rm lct}_x (\fra) < 1$.  
\end{proposition}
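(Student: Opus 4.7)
The plan is to reduce the statement directly to the corresponding fact for hypersurfaces, using the definition of $\overline{\alpha}_x(\fra)$ in terms of a generic linear combination of generators, together with the known compatibility of the log canonical threshold with taking a generic element.

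First, I would fix generators $f_1,\dots,f_r$ of $\fra$ on an affine neighborhood of $x$ and let $f=\sum_i\lambda_i f_i$ be a general $\CC$-linear combination. By \cite[Proposition~9.2.28]{Lazarsfeld} (the same reference used in the proofs of Proposition~\ref{rem1} and Proposition~\ref{min_exp_equiv}) we have
$$
{\rm lct}_x(\fra) = {\rm lct}_x(f).
$$
By the definition of the generic minimal exponent, we also have $\overline{\alpha}_x(\fra) = \widetilde{\alpha}_x(f)$.

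Next, I would invoke the basic relationship between the log canonical threshold and the minimal exponent of a hypersurface, namely ${\rm lct}_x(f) = \min\{1,\widetilde{\alpha}_x(f)\}$, recalled in the introduction of this section following the result of Lichtin and Koll\'ar. This gives the unconditional inequality $\widetilde{\alpha}_x(f)\ge {\rm lct}_x(f)$, hence
$$
\overline{\alpha}_x(\fra) = \widetilde{\alpha}_x(f) \ge {\rm lct}_x(f) = {\rm lct}_x(\fra),
$$
which proves the first assertion.

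For the second assertion, if ${\rm lct}_x(\fra) < 1$, then ${\rm lct}_x(f) < 1$, and the identity ${\rm lct}_x(f) = \min\{1,\widetilde{\alpha}_x(f)\}$ forces $\widetilde{\alpha}_x(f) = {\rm lct}_x(f)$. Hence $\overline{\alpha}_x(\fra) = {\rm lct}_x(\fra)$. There is essentially no obstacle here: all the work has already been done in establishing the hypersurface version of the statement and in the genericity result for log canonical thresholds. The only care needed is to ensure that a single general $f$ can be chosen that simultaneously computes both ${\rm lct}_x(\fra)$ and $\overline{\alpha}_x(\fra)$, which is immediate since each of these conditions is satisfied on a Zariski open dense subset of the parameter space of linear combinations (using the semicontinuity of $\widetilde{\alpha}_x$ recalled earlier in this section), and the intersection of two such subsets is still dense.
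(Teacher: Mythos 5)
There is a genuine gap in the first part of your argument. You claim that ${\rm lct}_x(\fra) = {\rm lct}_x(f)$ for a general linear combination $f$ of the generators, citing \cite[Proposition~9.2.28]{Lazarsfeld}. But that proposition only controls multiplier ideals $\I(\fra^c)$ versus $\I(f^c)$ for $c<1$; it therefore gives the equality of log canonical thresholds only when ${\rm lct}_x(\fra)\le 1$. When ${\rm lct}_x(\fra) > 1$ the equality is simply false: since $f$ vanishes at $x$, one always has ${\rm lct}_x(f) \le 1$, so ${\rm lct}_x(f) < {\rm lct}_x(\fra)$. For instance, take $\fra = \frak{m}_x^2$ in $\CC[x_1,x_2,x_3]$; then ${\rm lct}_0(\fra)=3/2$, while a general $f\in\fra$ is a quadratic form with an ordinary double point, so ${\rm lct}_0(f)=1$. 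Your chain of inequalities then only delivers $\overline{\alpha}_x(\fra) \ge {\rm lct}_x(f) = 1$, which is strictly weaker than $\overline{\alpha}_x(\fra)\ge {\rm lct}_x(\fra)$. (In that example the needed inequality does hold, since $\overline{\alpha}_0(\fra)=\widetilde{\alpha}_0(f)=3/2$, but your argument does not show it.)

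By contrast, the paper's proof invokes \cite[Proposition~2.1]{CM}, which gives directly that $\widetilde{\alpha}_x(f)\ge {\rm lct}_x(\fra)$ for general $f$ — with ${\rm lct}_x(\fra)$, not ${\rm lct}_x(f)$, on the right-hand side — via a lower bound for $\widetilde{\alpha}_x(f)$ in terms of discrepancies on a log resolution; an alternative proof goes through Proposition~\ref{min_exp_coeff} and the identity $\widetilde{\alpha}(g)={\rm lct}(\fra)$ from \cite{Mustata}. Your reasoning for the second assertion (equality when ${\rm lct}_x(\fra)<1$) is fine, since in that range the equality ${\rm lct}_x(\fra)={\rm lct}_x(f)$ does hold and the Lichtin--Koll\'ar relation does the rest, and your remark about intersecting two dense open sets of parameters is also correct; but the first assertion needs the sharper input.
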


\begin{proof}
It is shown in \cite[Proposition~2.1]{CM} that if $f$ is a general linear combination of generators of $\fra$,
then $\widetilde{\alpha}_x(f)\geq{\rm lct}_x(\fra)$. 
The argument uses \cite[Corollary~D]{MP3}, which gives a lower bound for $\widetilde{\alpha}_x(f)$ 
in terms of discrepancies on a log resolution. This implies the first assertion.
Another proof follows from Proposition \ref{min_exp_coeff} below; see Remark \ref{new_ineq}.
The second assertion follows as in the proof of Proposition~\ref{min_exp_equiv}.
\end{proof}

\begin{proposition}\label{behavior_inclusion}
If $\fra\subseteq\frb$ are nonzero ideals on $X$ and $x$ lies in the zero-locus of $\frb$, then
$$\overline{\alpha}_x(\fra)\leq\overline{\alpha}_x(\frb).$$
\end{proposition}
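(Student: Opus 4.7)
The plan is to view both generic minimal exponents as values of $\widetilde{\alpha}_x$ on a single family of linear combinations, and apply the semicontinuity discussion that precedes the definition. After shrinking $X$, pick generators $f_1,\ldots,f_r$ of $\fra$ and extend to a system of generators $f_1,\ldots,f_r,g_1,\ldots,g_s$ of $\frb$. Consider the regular function
\[
h(y,\lambda,\mu) \;=\; \sum_{i=1}^r \lambda_i f_i(y) + \sum_{j=1}^s \mu_j g_j(y)
\]
on $W = X \times \CC^{r+s}$, together with the projection $\pi\colon W \to T := \CC^{r+s}$ and the section $t \mapsto (x,t)$. Outside of the (lower-dimensional) locus where the restriction of $h$ to a fiber vanishes in a neighborhood of $x$, the semicontinuity result recalled from \cite[Theorem~E(2)]{MP3} applies.

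By that result, for every $\alpha > 0$ the locus $\{t \in T \mid \widetilde{\alpha}_x(h_t) \geq \alpha\}$ is open in $T$, the set of values of $\widetilde{\alpha}_x(h_t)$ is finite, and its maximum is attained on a Zariski-dense open subset of $T$. By definition this maximum is exactly $\overline{\alpha}_x(\frb)$. In particular, for every $(\lambda,\mu) \in T$ for which $h_{(\lambda,\mu)}$ is defined near $x$,
\[
\widetilde{\alpha}_x(h_{(\lambda,\mu)}) \;\leq\; \overline{\alpha}_x(\frb).
\]

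To finish, restrict to the linear subspace $L = \CC^r \times \{0\} \subseteq T$, on which $h_{(\lambda,0)} = \sum_i \lambda_i f_i$ runs over all $\CC$-linear combinations of the generators of $\fra$. Applying the same semicontinuity to the subfamily over $L$, a general $\lambda \in \CC^r$ satisfies $\widetilde{\alpha}_x(h_{(\lambda,0)}) = \overline{\alpha}_x(\fra)$ by definition. Combining this with the inequality above, taking any such general $\lambda$, we obtain $\overline{\alpha}_x(\fra) \leq \overline{\alpha}_x(\frb)$, as required. There is no genuine obstacle here; the only thing to check is that the subspace $L$ meets the domain where $h_t$ is nonzero near $x$, which holds as soon as $\lambda \neq 0$ (and would otherwise contradict $x$ lying in the zero-locus of $\frb$).
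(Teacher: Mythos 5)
Your argument is correct and follows essentially the same route as the paper's own proof: the paper also reduces to the observation that a general linear combination of generators of $\fra$ lies in $\frb$, and then invokes the semicontinuity property of $\widetilde{\alpha}_x$ (which shows that the value at a general linear combination of generators of $\frb$ is the maximum over the whole family) to conclude. You merely make explicit what the paper leaves implicit: that one should extend a generating set of $\fra$ to one of $\frb$, so that every linear combination of generators of $\fra$ literally appears in the family of linear combinations of generators of $\frb$ (this also uses the remark that $\overline{\alpha}_x$ is independent of the choice of generators). One small caveat in your last sentence: $\sum_i \lambda_i f_i$ can vanish for particular nonzero $\lambda$ if the $f_i$ are $\CC$-linearly dependent, but for general $\lambda$ it is nonzero, which is all that is needed.
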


\begin{proof}
Let $f$ be a general linear combination of generators of $\fra$ and $g$ a general linear combination
of generators of $\frb$, so that
$$\overline{\alpha}_x(\fra)=\widetilde{\alpha}_x(f)\quad\text{and}\quad \overline{\alpha}_x(\frb)=\widetilde{\alpha}_x(g).$$
Since $f\in\frb$, it follows from the semicontinuity property of the minimal exponents for hypersurfaces that 
$\widetilde{\alpha}_x(f)\leq\widetilde{\alpha}_x(g)$, which gives the assertion in the proposition.
\end{proof}

The following series of properties of the minimal exponent of an ideal follows without much effort from the analogous properties proved in the case of divisors in 
\cite[Theorem~E and \S6]{MP3}.

\begin{proposition}\label{properties_minimal_exponent}
(1) For every smooth subvariety $Y \subseteq X$, every ideal $\fra$ on $X$ such that $\fra\cdot \shO_Y\neq 0$, 
and every $x$ in the zero-locus of $\fra\cdot\shO_Y$, we have 
$$\overline{\alpha}_x(\fra\cdot \shO_Y) \leq \overline{\alpha}_x(\fra).$$

\noindent
(2) For every ideal $\fra$ and every $\alpha > 0$, the set 
$$\{ x \in V(\fra)~|~\overline{\alpha}_x(\fra) \geq\alpha\}$$
is open in $X$.

\noindent
(3) More generally, let $f \colon X \to T$ be a smooth morphism and $s \colon T \to X$ a section of $f$. If
$\fra$ is a nonzero ideal on $X$ that vanishes on $s(T)$ and such that $\fra\cdot\shO_{X_t}$ is not zero
for any fiber $X_t$ of $f$ over $t\in T$, then for every $\alpha>0$, the set
$$\{t\in T~|~  \overline{\alpha}_{s(t)}(\fra\cdot \shO_{X_t})\geq\alpha\}$$
is open in $T$.

\noindent
(4) If $\fra$ and $\frb$ are nonzero ideals vanishing at $x \in X$, then 
$$\overline{\alpha}_x(\fra + \frb) \le \widetilde{\alpha}_x(\fra) + \widetilde{\alpha}_x(\frb).$$
\end{proposition}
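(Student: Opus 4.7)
The plan is to deduce each of the four assertions from its counterpart for hypersurfaces proved in \cite[Theorem~E and \S6]{MP3}, using the defining property that $\overline{\alpha}_x(\fra)=\widetilde{\alpha}_x(f_\lambda)$ for $f_\lambda=\sum_{i=1}^r\lambda_if_i$ with $\lambda$ in a dense open subset of $\CC^r$, together with the observation that a finite intersection of non-empty Zariski open subsets of $\CC^r$ is still non-empty.

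For (1), choose generators $f_1,\ldots,f_r$ of $\fra$ in an affine neighborhood of $x$; their restrictions to $Y$ generate $\fra\cdot\shO_Y$. Pick $\lambda\in\CC^r$ sufficiently generic that the equalities $\widetilde{\alpha}_x(f_\lambda)=\overline{\alpha}_x(\fra)$ and $\widetilde{\alpha}_x(f_\lambda|_Y)=\overline{\alpha}_x(\fra\cdot\shO_Y)$ hold simultaneously; the restriction inequality for minimal exponents of hypersurfaces then yields the stated bound. For (4), let $f_1,\ldots,f_r$ generate $\fra$ and $g_1,\ldots,g_s$ generate $\frb$, and write a generic linear combination of their union as $f_\lambda+g_\mu$ with $f_\lambda=\sum\lambda_if_i$, $g_\mu=\sum\mu_jg_j$. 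Choosing $(\lambda,\mu)\in\CC^{r+s}$ in the intersection of three dense opens, we arrange that $\widetilde{\alpha}_x(f_\lambda)$, $\widetilde{\alpha}_x(g_\mu)$, and $\widetilde{\alpha}_x(f_\lambda+g_\mu)$ realize $\overline{\alpha}_x(\fra)$, $\overline{\alpha}_x(\frb)$, and $\overline{\alpha}_x(\fra+\frb)$ respectively, and the claim reduces to the corresponding subadditivity at a common point for hypersurfaces from \cite[\S6]{MP3}.

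The main new content lies in (3); (2) is the special case $T=X$, $f=\id_X$, $s=\id_X$. Working in an affine neighborhood and picking generators $f_1,\ldots,f_r$ of $\fra$, consider the smooth morphism
$$f':=f\times\id_{\CC^r}\colon X\times\CC^r\longrightarrow T\times\CC^r,$$
equipped with section $s':=s\times\id_{\CC^r}$ and function $g:=\sum_{i=1}^r y_if_i\in\shO_{X\times\CC^r}(X\times\CC^r)$; note $g$ vanishes on $s'(T\times\CC^r)$ because $f_i\circ s\equiv 0$ for every $i$. The fiber of $f'$ over $(t,\lambda)$ is canonically $X_t$, on which $g$ restricts to the hypersurface $f_\lambda|_{X_t}$ and $s'(t,\lambda)$ corresponds to $s(t)$. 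Applying the semicontinuity statement \cite[Theorem~E(2)]{MP3} to the triple $(f',s',g)$ over the open subset $V\subseteq T\times\CC^r$ where $f_\lambda|_{X_t}\not\equiv 0$, the locus
$$\tilde{T}_\alpha:=\bigl\{(t,\lambda)\in V\ \big|\ \widetilde{\alpha}_{s(t)}(f_\lambda|_{X_t})\geq\alpha\bigr\}$$
is open in $V$. For every $t\in T$, the hypothesis $\fra\cdot\shO_{X_t}\neq 0$ guarantees that $\widetilde{\alpha}_{s(t)}(f_\lambda|_{X_t})$ attains $\overline{\alpha}_{s(t)}(\fra\cdot\shO_{X_t})$ for $\lambda$ in a dense open subset of $\CC^r$; consequently the image of $\tilde{T}_\alpha$ under the open projection $T\times\CC^r\to T$ is precisely $\{t\in T:\overline{\alpha}_{s(t)}(\fra\cdot\shO_{X_t})\geq\alpha\}$, and this image is open. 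The main subtlety I expect is this last step: because the "generic" subset of $\CC^r$ realizing $\overline{\alpha}$ varies with $t$, one cannot simply fix a single $\lambda$ that works uniformly, and it is exactly the passage to $T\times\CC^r$ combined with openness of projection that circumvents this.
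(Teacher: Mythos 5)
Your overall strategy --- reduce each assertion to the hypersurface statements in \cite[Theorem~E and \S6]{MP3} by passing to a general linear combination of generators, and handle the issue that the ``good'' locus of coefficients $\lambda$ depends on the base point by working over $T \times \CC^r$ and projecting --- is exactly what the paper has in mind; the paper itself supplies no details beyond the one-sentence remark preceding the proposition. Parts (1), (3) and (4) are handled correctly, and I agree that the $\widetilde{\alpha}$'s on the right-hand side of (4) should be read as $\overline{\alpha}$'s given Remark~\ref{local-coh-comment}.

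There is, however, a genuine gap in your treatment of (2). You claim that (2) is the special case of (3) with $T=X$, $f=\id_X$, $s=\id_X$. With that choice the fiber $X_t$ is the single reduced point $\{t\}$, so $\fra\cdot\shO_{X_t}$ is the image of $\fra$ in the residue field $\kappa(t)$; for every $t\in V(\fra)$ this is zero, which both violates the hypothesis ``$\fra\cdot\shO_{X_t}\neq 0$'' in (3) and makes $\overline{\alpha}_{s(t)}(\fra\cdot\shO_{X_t})$ undefined. So (3), as stated, gives no information at all about the set appearing in (2). The correct way to see (2) as an instance of (3) is to take $T=V(\fra)$, $X'=X\times T$ with the second projection $f'=p_2$ (which is smooth since $X$ is), the section $s'(t)=(t,t)$, and the ideal $\fra'=p_1^{-1}\fra\cdot\shO_{X'}$; then $X'_t\cong X$ and $\fra'\cdot\shO_{X'_t}\cong\fra\neq 0$, the section lands in $V(\fra')$, and $\overline{\alpha}_{s'(t)}(\fra'\cdot\shO_{X'_t})=\overline{\alpha}_t(\fra)$, so the conclusion of (3) literally reproduces (2). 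Alternatively, one can prove (2) directly by the same projection argument you used for (3): apply the semicontinuity of the minimal exponent to the single hypersurface $g=\sum_i y_if_i$ on $X\times\CC^r$ as the point $(x,\lambda)$ varies, and project the resulting open set to $X$.

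Two smaller points worth flagging when you write this up: in (3), you should briefly justify that the locus $V\subseteq T\times\CC^r$ where $g$ is not identically zero on the fiber is open (it is the image under the smooth, hence open, morphism $f'$ of the open complement of $V(g)$ in $X\times\CC^r$), since openness of $V$ is what converts ``$\tilde T_\alpha$ open in $V$'' into ``$\tilde T_\alpha$ open in $T\times\CC^r$''; and in (1), the generating set $f_1|_Y,\dots,f_r|_Y$ of $\fra\cdot\shO_Y$ need not be a minimal one, so you are implicitly using the paper's remark that $\overline{\alpha}_x$ is independent of the chosen system of generators.
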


\begin{example}\label{monomial}
We show that if $\fra$ is a monomial ideal in ${\mathbf C}[x_1,\ldots,x_n]$, with ${\rm ord}_0(\fra)>1$, then 
$\overline{\alpha}_0(\fra) = {\rm lct}_0 (\fra)$. Recall that in this case, by a result of Howald \cite{Howald} we have 
${\rm lct}_0(\fra)=1/c$, where if $P_{\fra}$ is the Newton polyhedron of $\fra$ (that is, $P_{\fra}$ is the convex hull of $u+{\mathbf R}_{\geq 0}^n$, for the monomials 
$x^u\in \fra$), we have $c=\min\{t>0\mid (t,\ldots,t)\in P_{\fra}\}$. 

Note now that if ${\mathfrak m}=(x_1,\ldots,x_n)$, then 
$$0\leq \overline{\alpha}_0(\fra+{\mathfrak m}^N)-\overline{\alpha}_0(\fra)\leq \frac{n}{N}.$$
Indeed, the first inequality follows from Proposition~\ref{behavior_inclusion}, while the second follows from 
Proposition~\ref{properties_minimal_exponent}(4)
and Example~\ref{eg1}(2).
We similarly have
$$0\leq {\rm lct}_0(\fra+{\mathfrak m}^N)-{\rm lct}_0(\fra)\leq\frac{n}{N}$$
(see \cite[Corollary~9.5.28]{Lazarsfeld}).
By letting $N$ go to infinity, we see that it is enough to show that $\overline{\alpha}_0(\fra)= {\rm lct}_0(\fra)$
when $\fra$ is a monomial ideal defining a scheme supported at $0$ and such that ${\rm ord}_0(\fra)>1$. If $f$
is a general linear combination of monomial generators of $\fra$, then the hypersurface defined by $f$ has an isolated singular 
point at $0$. Moreover, it is nondegenerate with respect to its Newton polyhedron, in which case it is well-known that $\widetilde{\alpha}_0(f)
=1/c$ (see \cite{Varchenko}, \cite{EhlersLo}, or \cite{Saito-exponents}).
\end{example}

We can define a global version of the generic minimal exponent, as follows.
For any proper nonzero ideal $\fra$ on $X$, we put 
\begin{equation}\label{def_global}
\overline{\alpha}(\fra) : = \underset{x \in V (\fra)}{\rm min} \overline{\alpha}_x(\fra).
\end{equation}
Note that since we work over ${\mathbf C}$, a countable intersection of Zariski open subsets of 
an irreducible algebraic variety has nonempty intersection. Using this, it follows easily 
from Proposition~\ref{properties_minimal_exponent}(2) that the set
$\{\overline{\alpha}_x(\fra)\mid x\in V(\fra)\}$ is a finite set. In particular, the minimum in (\ref{def_global})
makes sense and the set of those $x\in V(\fra)$ for which the minimum is achieved is a closed subset of $V(\fra)$. 
We also see that for every $x\in V(\fra)$, we have
$$\overline{\alpha}_x(\fra)=\max_{U\ni x}\overline{\alpha}(\fra\cdot\shO_U),$$
where the maximum is over the open neighborhoods of $x$.

\medskip

Another useful description of $\overline{\alpha}_x(\fra)$ in terms of minimal exponents of hypersurfaces is facilitated by Theorem \ref{prop1}. 
Suppose that $\fra$ is generated by $f_1, \ldots, f_r \in \shO_X(X)$ and consider in $X\times \AAA^r$ the hypersurface given by the function 
$g = \sum_{i=1}^r y_i f_i$, where $y_1, \ldots, y_r$ are the coordinates on $\AAA^r$.

\begin{proposition}\label{min_exp_coeff}
Given $x\in V(\fra)$, for ${\lambda} = (\lambda_1, \ldots, \lambda_r) \in \AAA^r$  general, 
we have 
$$\overline{\alpha}_x(\fra)= \widetilde{\alpha}_{(x,{\lambda})}(g).$$
\end{proposition}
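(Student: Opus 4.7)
My plan is to combine the Restriction Theorem for Hodge ideals of $\QQ$-divisors (Theorem~\ref{prop_restriction}), applied to the hyperplane $H_\lambda := X \times \{\lambda\} \subseteq X \times \AAA^r$, with the characterization (\ref{char_min_exp}) of the minimal exponent via triviality of Hodge ideals. I would split the argument into two cases, according to whether $\fra$ is radical in codimension one around $x$.

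In the main case, where $\fra$ is radical in codimension one near $x$, Remark~\ref{rmk_prop1} ensures that $G$ is reduced near $(x,\lambda)$ for every $\lambda$, while for generic $\lambda$ the restriction $G\vert_{H_\lambda}$ is the reduced divisor $D_\lambda := {\rm div}(f_\lambda)$ near $x$, with $f_\lambda = \sum \lambda_i f_i$. I would then choose $\lambda$ general enough that, first, $\widetilde{\alpha}_x(f_\lambda) = \overline{\alpha}_x(\fra)$ (the defining condition of $\overline{\alpha}_x(\fra)$) and, second, the equality case of Theorem~\ref{prop_restriction} yields $I_p(\alpha D_\lambda) = I_p(\alpha G) \cdot \shO_X$ for every $p \geq 0$ and every $\alpha \in (0,1] \cap \QQ$. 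Nakayama applied at $(x,\lambda) \in H_\lambda$ then shows that $I_p(\alpha G)_{(x,\lambda)}$ is trivial if and only if $I_p(\alpha D_\lambda)_x = \shO_{X,x}$, so invoking (\ref{char_min_exp}) on both sides characterizes $\widetilde{\alpha}_{(x,\lambda)}(g)$ and $\widetilde{\alpha}_x(f_\lambda) = \overline{\alpha}_x(\fra)$ by the same supremum over admissible $(p,\alpha)$, forcing them to coincide.

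In the remaining case, where $\fra$ is not radical in codimension one around $x$, Proposition~\ref{min_exp_equiv} gives $\overline{\alpha}_x(\fra) = {\rm lct}_x(\fra) < 1$. Writing $\fra = g_0 \frb$ as in Section~2 with $g_0$ not squarefree near $x$, the factorization $g = g_0 \cdot \sum y_i h_i$ shows that $G$ carries a non-reduced component near $(x,\lambda)$, forcing ${\rm lct}_{(x,\lambda)}(g) < 1$ and hence $\widetilde{\alpha}_{(x,\lambda)}(g) = {\rm lct}_{(x,\lambda)}(g)$. Combining the inclusion $(g) \subseteq \fra \cdot \shO_{X \times \AAA^r}$, which gives ${\rm lct}_{(x,\lambda)}(g) \le {\rm lct}_x(\fra)$ by smooth invariance of the log canonical threshold under pullback, with the restriction inequality ${\rm lct}_x(f_\lambda) \le {\rm lct}_{(x,\lambda)}(g)$ and the identity ${\rm lct}_x(f_\lambda) = {\rm lct}_x(\fra)$ for generic $\lambda$ (\cite[Proposition~9.2.28]{Lazarsfeld}), one finds that all three quantities agree.

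The main obstacle will be the uniformity of the generality condition on $\lambda$ in the first case: the Restriction Theorem grants the equality $I_p(\alpha D_\lambda) = I_p(\alpha G) \cdot \shO_X$ only for sufficiently general $H_\lambda$, with a generality condition that a priori depends on $(p,\alpha)$. Since only countably many pairs $(p,\alpha) \in \NN \times ((0,1] \cap \QQ)$ are relevant, a single $\lambda$ in the intersection of the corresponding dense open subsets of $\AAA^r$ will work.
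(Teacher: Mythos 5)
Your proof is correct and reaches the same conclusion, but the route is genuinely different from the paper's. The paper establishes the inequality $\widetilde{\alpha}_{(x,\lambda)}(g)\geq\overline{\alpha}_x(\fra)$ (for general $\lambda$) by appealing directly to the restriction behavior of minimal exponents of hypersurfaces, \cite[Theorem~E(1)]{MP3}, applied to the fiber $X\times\{\lambda\}$; no Hodge ideals are involved in that direction. For the opposite inequality the paper writes $\widetilde{\alpha}_{(x,\lambda)}(g)=p+\alpha$ with $\alpha\in(0,1]$, feeds $I_p(g^\alpha)_{(x,\lambda)}=\shO$ into the ${\rm Coeff}$ description of Theorem~\ref{prop1}(ii) to conclude $I_p(\fra^\alpha)_x=\shO_{X,x}$, and then invokes Proposition~\ref{min_exp_equiv}. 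You instead run both inequalities through the equality case of the Restriction Theorem for Hodge ideals of $\QQ$-divisors (iterated over the hyperplanes cutting out $H_\lambda$), combined with the fact that an ideal in a local ring is the unit ideal if and only if its image in any quotient by a proper ideal is the unit ideal. The effect is that you re-derive, on the spot, exactly the content that the paper packaged into Theorem~\ref{prop1}(ii) and Proposition~\ref{min_exp_equiv}; what you lose is the clean ``for all $\lambda$'' conclusion the paper gets for the $\leq$ direction, and what you gain is avoiding the external input \cite[Theorem~E(1)]{MP3}.

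Two small points deserve fixing. First, the reference ``Theorem~\ref{prop_restriction}'' is misattributed: that statement is the restriction theorem for Hodge ideals of \emph{ideals}; what you actually need is the Restriction Theorem for $\QQ$-divisors, \cite[Theorem~13.1]{MP4} (or, for the inclusion only, Corollary~\ref{cor_pull_back} applied to the principal ideal $(g)$). Second, the countable intersection you take produces equality only for very general $\lambda$, whereas the statement asserts it for $\lambda$ in a nonempty Zariski open set. To close this, observe (as in the discussion preceding the definition of $\overline{\alpha}_x$, via \cite[Theorem~E(2)]{MP3}) that $\lambda\mapsto\widetilde{\alpha}_{(x,\lambda)}(g)$ achieves its maximum on a nonempty Zariski open subset $U$ of $\AAA^r$; since your very-general set meets $U$, the common value on $U$ must equal $\overline{\alpha}_x(\fra)$. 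With these two adjustments your argument is complete.
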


\begin{proof}
If $\lambda$ is such that $f_{\lambda}=\sum_{i=1}^r\lambda_if_i$ is nonzero, then
$$\widetilde{\alpha}_{(x,\lambda)}(g)\geq \widetilde{\alpha}_x(f_{\lambda}).$$ 
This follows from the behavior of minimal exponents under restriction (in this case to a fiber of the projection $X \times \AAA^r \to \AAA^r$) described in \cite[Theorem~E(1)]{MP3}. We thus deduce from the definition of $\overline{\alpha}_x(\fra)$ that for $\lambda$ general, we have
$$\widetilde{\alpha}_{(x,\lambda)}(g)\geq \overline{\alpha}_x(\fra).$$ 

We next show that the opposite inequality holds for \emph{every} $\lambda \in \AAA^r$. 
If ${\rm ord}_{(x,\lambda)}(g)=1$, then ${\rm ord}_x(\fra)=1$, and the inequality holds since both sides are infinite.
Suppose now that ${\rm ord}_{(x,\lambda)}(g)\geq 2$ and consider first the case when $\fra$ is radical in codimension 1
in a neighborhood of $x$ (in which case the divisor defined by $g$ is reduced in a neighborhood of 
$\{x\}\times\AAA^r$).
Let's write
$$\widetilde{\alpha}_{(x, \lambda)}(g) = p + \alpha,$$
with $p$ an integer and $\alpha \in (0, 1]$. We deduce from 
the description of the minimal exponent of $g$ in terms of Hodge ideals that 
$$I_p (g^\alpha)_{(x,{\lambda})} = \shO_{X\times \AAA^r, (x,{\lambda})}.$$
By Proposition~\ref{min_exp_equiv}, it is enough to show that $I_p (\fra^\alpha)$ is trivial at $x$ as well. However, by Theorem \ref{prop1}(ii) we know that 
$$I_p (\fra^\alpha) = {\rm Coeff} \big(I_p (g^\alpha)\big),$$
so the result follows from the general (and easy to check) fact that if $I \subset \shO_X[y_1, \ldots, y_r]$ is an ideal which is not contained in the  maximal 
ideal $\frak{m}_{(x, {\lambda})}$, then ${\rm Coeff}(I)$ is not contained in $\frak{m}_x$.

If $\fra$ is not radical in codimension 1 around $x$, then the divisor defined by $g$ is not reduced around $(x,\lambda)$ and we
 have
 $$\overline{\alpha}_x(\fra)={\rm lct}_x(\fra)\quad\text{and}\quad \widetilde{\alpha}_{(x,\lambda)}(g)={\rm lct}_{(x,\lambda)}(g)$$
 by Proposition~\ref{min_exp_equiv}. We then argue as above, with $p=0$, using Remark~\ref{rmk_min_exp_equiv}.
\end{proof}

\begin{remark}\label{new_ineq}
The above result leads to another proof of Proposition \ref{std_ineq}. Indeed, after possibly restricting to a neighborhood of $x$, we may assume 
that ${\rm lct}_x (\fra) = {\rm lct} (\fra)$. Now by \cite[Corollary~1.2]{Mustata} we know that $\widetilde{\alpha}(g) = {\rm lct}(\fra)$. On the other hand, 
Proposition \ref{min_exp_coeff} says that for $\lambda\in\AAA^r$ general, we have
$$\overline{\alpha}_x(\fra) =  \widetilde{\alpha}_{(x, {\lambda})}(g) \ge \widetilde{\alpha}(g).$$
See also Theorem \ref{root} below and its proof for more general statements.
\end{remark}

Recall that for any nonzero ideal $\fra$ in $X$, a Bernstein-Sato polynomial $b_{\fra}(s)$ was defined in \cite{BMS},
extending the classical invariant associated to a hypersurface. For every $x\in V(\fra)$, we have a local version $b_{\fra,x}(s)$.
By Theorem 2 in \emph{loc. cit.}  the greatest root of $b_{\fra, x}(s)$ is again 
$-{\rm lct}_x (\fra)$, as in the case of hypersurfaces.
We conclude by showing that the generic minimal exponent continues to be a root as well.

\begin{theorem}\label{root}
For every $x\in V(\fra)$,  the negative of  $\overline{\alpha}_x(\fra)$ is a root of the Bernstein-Sato polynomial $b_{\fra, x}(s)$.
\end{theorem}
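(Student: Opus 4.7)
The strategy is to reduce, via Proposition~\ref{min_exp_coeff}, to a statement about the hypersurface $g = \sum_{i=1}^r y_i f_i$ on $X \times \AAA^r$, and then appeal to the relationship between $b_g$ and $b_{\fra}$ coming from \cite{BMS}. We may assume $X$ is affine and $\fra = (f_1, \ldots, f_r)$. If $\overline{\alpha}_x(\fra) = \infty$, then $\mathrm{ord}_x(\fra) = 1$ by Example~\ref{eg1}(1) and the statement is vacuous, so assume $\overline{\alpha}_x(\fra)<\infty$.

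By Proposition~\ref{min_exp_coeff}, for a general $\lambda\in\AAA^r$ we have
$$\overline{\alpha}_x(\fra)=\widetilde{\alpha}_{(x,\lambda)}(g),$$
and this common value is finite. Thus $g$ is singular at $(x,\lambda)$, and by the very definition of the minimal exponent of a hypersurface $-\widetilde{\alpha}_{(x,\lambda)}(g)$ is the largest root of $b_{g,(x,\lambda)}(s)/(s+1)$; in particular, it is a root of $b_{g,(x,\lambda)}(s)$. It then suffices to show that every root of $b_{g,(x,\lambda)}(s)$ is a root of $b_{\fra,x}(s)$, and to apply this to the distinguished root $-\overline{\alpha}_x(\fra)$.

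This last step, which is the main obstacle, should follow from the framework of \cite{BMS}. There $b_\fra$ is characterized via the multi-indexed $V$-filtration on the $\Dmod$-module generated by $\prod_i f_i^{s_i}$, or equivalently through the graph embedding $X\hookrightarrow X\times\AAA^r$ paired with the function $g$; one needs to localize this comparison to a single point $(x,\lambda)$ of the fiber $\pi^{-1}(x)$ in order to conclude that the roots of the local $b$-function $b_{g,(x,\lambda)}(s)$ sit inside those of $b_{\fra,x}(s)$. This is a point-wise strengthening of the global identity $\widetilde{\alpha}(g)=\mathrm{lct}(\fra)$ of \cite{Mustata} recalled in Remark~\ref{new_ineq}, and is the non-trivial technical input of the proof.
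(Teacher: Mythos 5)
Your reduction via Proposition~\ref{min_exp_coeff} to a statement about the hypersurface $g$ on $X\times\AAA^r$ is exactly what the paper does. However, the step you declare to be ``the non-trivial technical input'' and leave unfinished is, in the paper, an entirely soft combination of known facts, and you have mislocated where the content lies. The identity that does the work is the \emph{global} one from \cite[Theorem~1.1]{Mustata}, namely $b_{\fra}(s)=b_g(s)/(s+1)$; no ``point-wise strengthening'' or localized $V$-filtration comparison is needed. The localization is handled by two standard observations: after shrinking $X$ to a small neighborhood of $x$ one has $b_{\fra,x}(s)=b_{\fra}(s)$, and the local Bernstein--Sato polynomial $b_{g,(x,\lambda)}(s)$ divides the global one $b_g(s)$. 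Putting these together, any root of $b_{g,(x,\lambda)}(s)/(s+1)$ is a root of $b_g(s)/(s+1)=b_{\fra}(s)=b_{\fra,x}(s)$, and you already observed that $-\overline{\alpha}_x(\fra)$ is such a root.

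One more small inaccuracy: your intermediate claim that ``every root of $b_{g,(x,\lambda)}(s)$ is a root of $b_{\fra,x}(s)$'' is off by the $(s+1)$ factor and is not what should be proved; the correct statement is about roots of $b_{g,(x,\lambda)}(s)/(s+1)$. This matters because $-1$ is always a root of $b_{g,(x,\lambda)}$ but need not be a root of $b_{\fra,x}$. Fortunately you had already noted the sharper fact that $-\overline{\alpha}_x(\fra)$ is a root of $b_{g,(x,\lambda)}(s)/(s+1)$, so the argument closes once the $(s+1)$ is tracked correctly and the global identity from \cite{Mustata} is invoked.
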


\begin{proof}
This is now a simple consequence of results obtained above and in \cite{Mustata}. Using the notation and statement of Proposition \ref{min_exp_coeff}, we  have 
$$\overline{\alpha}_x(\fra) = \widetilde{\alpha}_{(x, {\lambda})}(g),$$
where ${\lambda} = (\lambda_1, \ldots, \lambda_r) \in \AAA^r$ is general. By the definition of the minimal exponent of $g$, it follows that
$-\overline{\alpha}_x(\fra)$ is the greatest root of  $b_{g,(x,\lambda)}(s)/(s+1)$. By replacing $X$ 
with an open neighborhood of $x$ we may assume that $b_{\fra, x}(s) = b_{\fra}(s)$. On the other hand, it is shown in \cite[Theorem~1.1]{Mustata} that 
$$b_{\fra}(s) = {b}_{g}(s)/(s+1).$$
Since ${b}_{g, (x,\lambda)}(s)$ divides ${b}_{g}(s)$ (see e.g. the discussion at the beginning of \cite[\S6]{MP3}), we obtain the desired result.
\end{proof}

We recall that in the case of hypersurfaces, there exists also a close relationship between minimal exponents and the $V$-filtration (see e.g. \cite{Saito-MLCT}, and also \cite{MP3}). On the other hand, for subschemes of higher codimension, 
as in Remark \ref{local-coh-comment} a connection with (the several functions version of) the $V$-filtration seems to be more suitable in the alternative context of the Hodge filtration on local cohomology.



\section*{References}
\begin{biblist}

\bib{BMS}{article}{
   author={Budur, Nero},
   author={Musta\c{t}\v{a}, Mircea},
   author={Saito, Morihiko},
   title={Bernstein-Sato polynomials of arbitrary varieties},
   journal={Compos. Math.},
   volume={142},
   date={2006},
   number={3},
   pages={779--797},
}

\bib{CM}{article}{
      author={Cluckers, Raf},
      author={Musta\c t\u a, Mircea},
      title={An invariant detecting rational singularities via the log canonical threshold},
      journal={preprint arXiv:1901.08111}, 
      date={2019}, 
}

\bib{EhlersLo}{article}{
   author={Ehlers, Fritz},
   author={Lo, Kam Chan},
   title={Minimal characteristic exponent of the Gauss-Manin connection of
   isolated singular point and Newton polyhedron},
   journal={Math. Ann.},
   volume={259},
   date={1982},
   number={4},
   pages={431--441},
}

\bib{Howald}{article}{
   author={Howald, J. A.},
   title={Multiplier ideals of monomial ideals},
   journal={Trans. Amer. Math. Soc.},
   volume={353},
   date={2001},
   number={7},
   pages={2665--2671},
}

\bib{Kashiwara}{article}{
   author={Kashiwara, M.},
   title={$B$-functions and holonomic systems. Rationality of roots of
   $B$-functions},
   journal={Invent. Math.},
   volume={38},
   date={1976/77},
   number={1},
   pages={33--53},
}

\bib{Kollar}{article}{
   author={Koll\'ar, J.},
   title={Singularities of pairs},
   conference={
      title={Algebraic geometry---Santa Cruz 1995},
   },
   book={
      series={Proc. Sympos. Pure Math.},
      volume={62},
      publisher={Amer. Math. Soc., Providence, RI},
   },
   date={1997},
   pages={221--287},
}

\bib{Lazarsfeld}{book}{
       author={Lazarsfeld, Robert},
       title={Positivity in algebraic geometry II},  
       series={Ergebnisse der Mathematik und ihrer Grenzgebiete},  
       volume={49},
       publisher={Springer-Verlag, Berlin},
       date={2004},
}

\bib{Mustata}{article}{
      author={Musta\c t\u a, Mircea},
	title={Bernstein-Sato polynomials for general ideals vs. principal ideals},
	journal={preprint arXiv:1906.03086}, 
	date={2019}, 
}

\bib{MP1}{article}{
      author={Musta\c t\u a, Mircea},
      author={Popa, Mihnea},
	title={Hodge ideals},
	journal={preprint arXiv:1605.08088, to appear in Memoirs of the AMS}, 
	date={2016}, 
}
\bib{MP2}{article}{
      author={Musta\c t\u a, Mircea},
      author={Po{}pa, Mihnea},
      title={Restriction, subadditivity, and semicontinuity theorems for Hodge ideals},
      journal={Int. Math. Res. Not.}, 
     date={2018}, 
     number={11},
     pages={3587--3605},
}

\bib{MP3}{article}{
author={Musta\c t\u a, Mircea},
      author={Popa, M.},
title={Hodge ideals for $\QQ$-divisors, V-filtration, and minimal exponent},
journal={preprint arXiv:1807.01935},
date={2018},
}

\bib{MP4}{article}{
      author={Musta\c t\u a, Mircea},
      author={Popa, Mihnea},
	title={Hodge ideals for $\QQ$-divisors: birational approach},
	journal={J. \'{E}c. polytech. Math.},
	volume={6},
	date={2019}, 
	pages={283--328},
}

\bib{MP5}{article}{
author={Musta\c t\u a, Mircea},
      author={Popa, M.},
title={Hodge filtration, minimal exponent, and local vanishing},
journal={preprint arXiv:1901.05780, to appear in Invent. Math.},
date={2019},
}

\bib{Saito-exponents}{article}{
   author={Saito, Morihiko},
   title={Exponents and Newton polyhedra of isolated hypersurface
   singularities},
   journal={Math. Ann.},
   volume={281},
   date={1988},
   number={3},
   pages={411--417},
}

\bib{Saito-MHM}{article}{
   author={Saito, M.},
   title={Mixed Hodge modules},
   journal={Publ. Res. Inst. Math. Sci.},
   volume={26},
   date={1990},
   number={2},
   pages={221--333},
}

\bib{Saito-B}{article}{
   author={Saito, Morihiko},
   title={On $b$-function, spectrum and rational singularity},
   journal={Math. Ann.},
   volume={295},
   date={1993},
   number={1},
   pages={51--74},
}

  \bib{Saito-HF}{article}{
   author={Saito, M.},
   title={On the Hodge filtration of Hodge modules},
   journal={Mosc. Math. J.},
   volume={9},
   date={2009},
   number={1},
   pages={161--191},
}

\bib{Saito-MLCT}{article}{
      author={Saito, Morihiko},
	title={Hodge ideals and microlocal $V$-filtration},
	journal={preprint arXiv:1612.08667}, 
	date={2016}, 
}

\bib{Varchenko}{article}{
   author={Var\v{c}enko, A. N.},
   title={Asymptotic Hodge structure on vanishing cohomology},
   language={Russian},
   journal={Izv. Akad. Nauk SSSR Ser. Mat.},
   volume={45},
   date={1981},
   number={3},
   pages={540--591},
}

\bib{Zhang}{article}{
      author={Zhang, Mingyi},
      title={Hodge filtration and Hodge ideals for $\QQ$-divisors with  weighted homogeneous isolated singularities},
      journal={preprint arXiv:1810.06656}, 
      date={2018}, 
}

\end{biblist}

\end{document}